\newtheorem{thm}{Theorem}[section]
\newtheorem{cor}[thm]{Corollary}
\newtheorem{lem}[thm]{Lemma}
\newtheorem{prop}[thm]{Proposition}
\theoremstyle{definition}
\newtheorem{defin}[thm]{Definition}
\newtheorem{rem}[thm]{Remark}
\newtheorem{conj}{Conjecture}[part]
\newcommand{\deltabf}{\text{\boldmath $\delta$}}
\newcommand{\eps}{\varepsilon}
\newcommand{\B}{\mathbb{B}}
\newcommand{\C}{\mathbb{C}}
\newcommand{\N}{\mathbb{N}}
\newcommand{\R}{\mathbb{R}}
\newcommand{\T}{\mathbb{T}}
\newcommand{\W}{\mathbb{W}}
\newcommand{\Z}{\mathbb{Z}}
\newcommand{\BB}{\mathcal{B}}
\newcommand{\DD}{\mathcal{D}}
\newcommand{\LL}{\mathcal{L}}
\newcommand{\MM}{\mathcal{M}}
\newcommand{\RR}{\mathcal{R}}
\newcommand{\RHH}{\mathcal{RH}}
\newcommand{\TT}{\mathcal{T}}
\newcommand{\XX}{\mathcal{X}}
\newcommand{\Ev}{\operatorname{Ev}}
\newcommand{\tv}{\tilde{v}}
\author[1]{Pau Rabassa}
\author[2]{Angel Jorba}
\author[2]{Joan Carles Tatjer}
\affil[1] {\mbox{Johann Bernoulli Institute for Mathematics and Computer Science,}
\centerline{ \mbox{University of Groningen, Groningen, The Netherlands}}
\newline
\mbox{E-mail: {\tt paurabassa@gmail.com}}
\vspace{2mm}}
\affil[2]{\mbox{Departament of Matem\`atica Aplicada i An\`alisi,}
\mbox{Universitat de Barcelona, Barcelona, Spain}
\mbox{E-mails: {\tt angel@maia.ub.edu}, {\tt jcarles@maia.ub.es}}}
\date{}
\title{
Towards a renormalization theory for quasi-periodically forced 
         one dimensional maps {II}. {A}symptotic behavior of 
         reducibility loss bifurcations\thanks{This work has been supported 
         by the MEC grant MTM2009-09723 and the CIRIT grant 2009 SGR 67. 
         P.R. has been partially supported by the PREDEX project, funded
         by the Complexity-NET: {\tt www.complexitynet.eu}.
}}
\begin{document}
\maketitle
\begin{abstract}
In this paper we are concerned with quasi-periodic
forced one dimensional maps.
We consider a two parametric family
of quasi-periodically forced maps such that the one dimensional 
map (before forcing) 
is unimodal and it has a full cascade of period doubling bifurcations. 
Between one period doubling and the next one it is known that there 
exist a parameter value where the $2^n$-periodic orbit is superatracting.
In a previous work we proposed an 
extension of the one-dimensional (doubling) renormalization 
operator to the quasi-periodic case. We proved that, if the family 
satisfies suitable hypotheses, the two parameter family has two 
curves of reducibility loss bifurcation around these parameter 
values. In the present work we study the asymptotic behavior 
of these bifurcations when $n$ grows to infinity. We show 
that the asymptotic behavior depends 
on the Fourier expansion of the quasi-periodic coupling of the family. 
The theory developed here provides a theoretical explanation 
to the behavior that can be observed numerically. 
\end{abstract}

\tableofcontents


\section{Introduction}
\label{section intro} 

This is the second of a series of papers (together with \cite{JRT11a,JRT11c})
where we propose an extension of the one dimensional renormalization
theory for the case of quasi-periodic forced maps.  Each of these
papers is self contained, but highly interrelated with the others. 
An more detailed exposition can be found in \cite{Rab10}.
In \cite{JRT11a} we give the definition of the operator for 
the case of quasi-periodic maps and we use it to prove the 
existence of reducibility loss bifurcations when the coupling parameter goes to
zero. In this paper we use the results obtained there 
to study the asymptotic behavior of these bifurcations 
when the period of the attracting set goes to infinity. 
Our quasi-periodic extension of the renormalization
operator is not complete in the sense that
several conjectures must be assumed. In \cite{JRT11c} we include
the numerical evidence which support our conjectures and we show
that the theoretical results agree with the behavior observed numerically. 
In \cite{JRT11c} we also include a numerical study of the asymptotic 
behavior of the reducibility loss bifurcations which will be summarized 
in the forthcoming section \ref{sec:num obs}. 

The classic one dimensional renormalization theory provides an explanation 
to the behavior observed in the cascades of period doubling bifurcations.
Concretely, given a typical one parametric family for unimodal maps
$\{f_\alpha\}_{\alpha\in I}$  one  observes
numerically that there exists a sequence of parameter
values $\{d_n\}_{n\in \N} \subset I$ such that,
the attracting periodic orbit of the map undergoes a period doubling
bifurcation. Between one period doubling and the next one there
exists also a parameter value $s_n$, for which the critical point
of $f_{s_n}$ is a periodic orbit with period $2^n$. One can
also observe 
that
\begin{equation}
\label{universal limit sumicon}
\lim_{n\rightarrow \infty} \frac{d_n - d_{n-1}} {d_{n+1} - d_{n}} = 
\lim_{n\rightarrow \infty} \frac{s_n - s_{n-1}} {s_{n+1} - s_{n}} = \deltabf = \texttt{ 4.66920...}. 
\end{equation}

This reveals two important phenomena.  The first one is the self-renormalizable 
structure of the bifurcation diagram. Since the limit converges, it indicates that 
there exists a scale factor of $\deltabf$ between one bifurcation and the next.
The second one is the universality, in the sense that the limit $\deltabf$ 
does not depend on the family considered. 

Renormalization theory provides a theoretical explanation to 
this phenomenon. The literature on this topic is quite extensive, 
some remarkable works are  \cite{Fei78,Fei79,Lan82,Sul92,Lyu99,FEM06}, we also  
refer the reader to the books \cite{MvS93,McM94} and references therein. 

In this paper we are interested in the analog of renormalization 
and universality problem for the case of quasi-periodic forced 
one dimensional maps. 
In \cite{JRT11p2} we have given numerical evidences of 
self-similarity of the bifurcation diagram and universality. 
These numerical evidences are described in section \ref{sec:num obs} below.  
In this paper we provide a theoretical explanation to the 
behavior observed numerically.

\subsection{Numerical observations on renormalization and universality for 
quasi-periodically forced maps} 
\label{sec:num obs}

\begin{figure}[t]
\centering
\resizebox{16cm}{!}{
\input{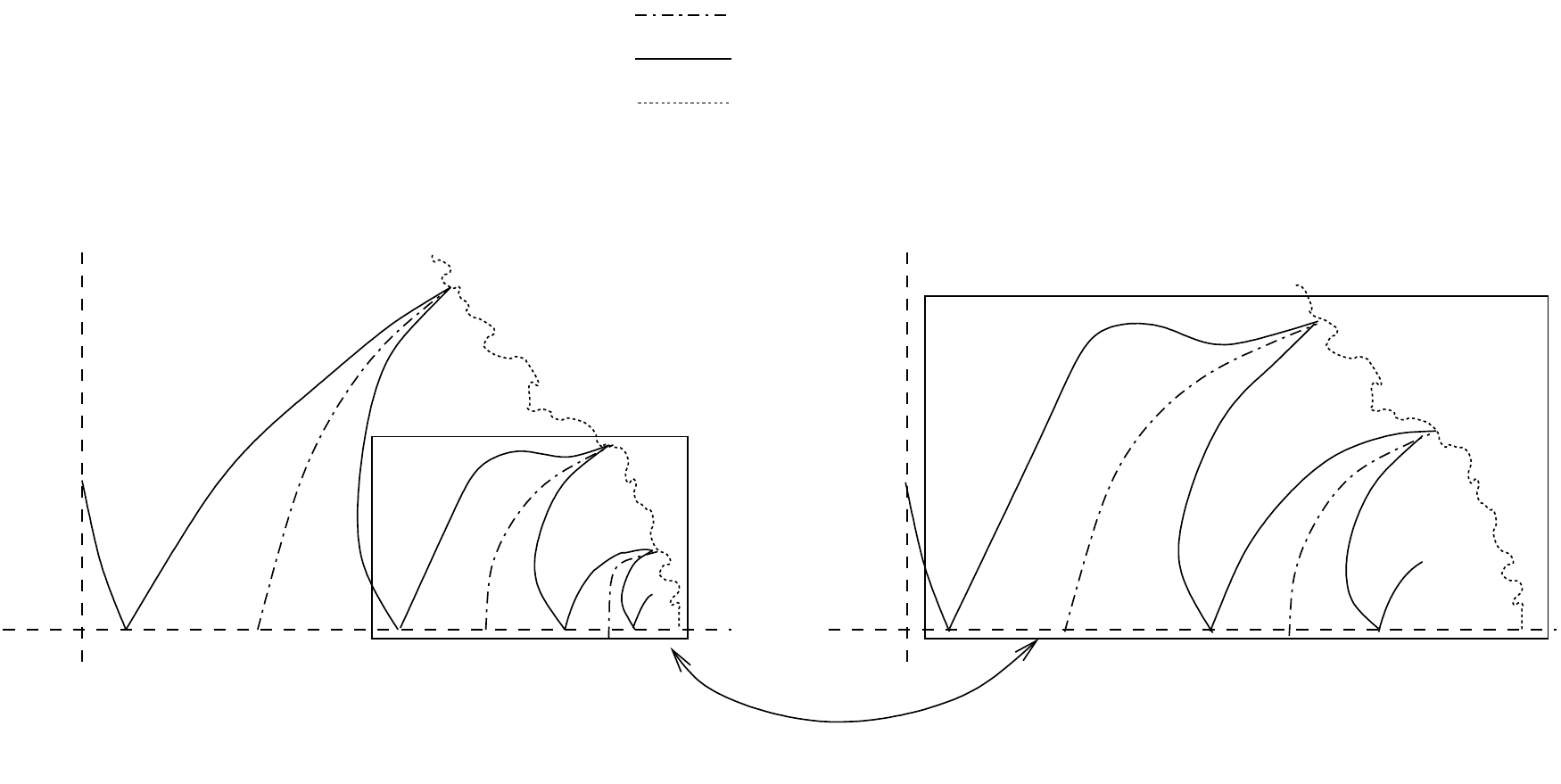_t}
\phantom{aaaaa}
}
\caption{ 
Schematic representation of the bifurcations diagram
of the Forced Logistic Map, for rotation number equal to 
$\omega$ (left) and $2\omega$ (right). See the text for more 
details.}
\label{Esquema BifPar}
\end{figure}

Consider $\{g_{\alpha,\eps}\}_{(\alpha,\eps)
\in J\subset\R^2}$ a two parametric family of 
quasi-periodic maps in the cylinder $\T\times \R$, 
such that it has the form 
\begin{equation}
\label{q.p- family}
\left.
\begin{array}{rcl}
\bar{\theta} & = & \theta + \omega  ,\\
\bar{x} & = & f_\alpha(x) +  \eps h_{\alpha,\eps}(\theta,x)  ,
\end{array}
\right\}
\end{equation}
with $\omega$ a Diophantine number, $\alpha$ and $\eps$ parameters, 
$h$ a periodic with respect $\theta$ and
$\{f_\alpha\}_{\alpha\in J}$  a family of one dimensional maps having 
a complete cascade of period doubling bifurcations as the family 
described before. As before, let 
 $\{d_n\}_{n\in \N} \subset I$ be the parameter values where 
the attracting periodic orbit of the map undergoes a period doubling
bifurcation and $\{s_n\}_{n\in \N} \subset I$ the values for which the critical point
of $f_{s_n}$ is a periodic orbit with period $2^n$. 
The paradigmatic example for this type of maps is the 
Forced Logistic Map (FLM for short), where the uncoupled one dimensional 
family is the logistic map, $f_\alpha (x) = \alpha x(1-x)$
with $\alpha \in [0,4]$. Nevertheless 
the results that we obtain are applicable to a wider class
of maps. 

In \cite{JRT11p} we computed some bifurcation diagrams in terms
of the dynamics of the attracting set. We have  taken
into account different properties of the attracting set, as the 
Lyapunov exponent  and, in the case of having
a periodic invariant curve, its period and reducibility.
The reducibility loss of an invariant curve is not a bifurcation 
in the classical sense, it is only a change in the spectral properties
of the transfer operator associated to the curve (see \cite{JT08}). 
Despite of this, it can be characterized as a bifurcation
(see definition 2.3 in \cite{JRT11p}) and it will be considered 
as such for the rest of this paper. The numerical
computations in the cited work reveal that the parameter
values for which the invariant curve doubles its period are
contained in regions of the parameter space where the invariant
curve is reducible, as sketched in figure \ref{Esquema BifPar}.
Taking into account the properties of universality and self 
renormalization of the Logistic Map, one might look for similar
phenomena in the bifurcation diagram of the FLM.

Let $s_n$ be the parameter value where the critical point of the 
uncoupled family $\{f_\alpha\}_{\alpha\in I}$ is periodic 
with period $2^n$. Numerical computations (see \cite{JRT11p}) 
revealed that from every parameter  value $(\alpha,\eps) = (s_n, 0)$
two curves are  born. These curves correspond to
reducibility-loss bifurcations of the $2^n$-periodic invariant curve.
In \cite{JRT11a} we proved that these curves really exist under
suitable hypotheses. Assume that these two curves can be locally expressed as
$(s_n + \alpha_n'\eps + O(\eps^2),\eps)$  and 
$(s_n + \beta_n' \eps + O(\eps^2),\eps)$. Numerical experiments 
in \cite{JRT11p,JRT11c}  show that the slopes depend on $\omega$, 
i. e. $\alpha_n'=\alpha_n'(\omega)$ and  $\beta_n'=\beta_n'(\omega)$), 
and also show that $\beta_n'(\omega) = - \alpha_n'(\omega)$ for 
the examples studied numerically. In \cite{JRT11a} we
 give explicit expressions of this slopes in terms 
of the quasi-periodic forced renormalization operator, 
for both $\alpha_n'(\omega)$ and $\beta_n'(\omega)$. In this paper we focus 
only on $\alpha_n'(\omega)$, but the discussion for $\beta_n'(\omega)$
is completely analogous. 

The slopes $\alpha_n'(\omega)$ can be used for the numerical detection of universality and
self-renormalization phenomena. If the bifurcation diagram is 
self renormalizable one should have that $\alpha_n'(\omega)/ 
\alpha_{n-1}'(\omega)$ converges to a constant. In general, 
this is not true  due to the fact that when the 
period is doubled, the rotation number of the system also is. 
Then one should look  for renormalization properties between 
the bifurcation diagram of the family for rotation number $\omega$ 
and the bifurcation diagram of the same family for rotation number
 $2\omega$. This is sketched in figure \ref{Esquema BifPar}. 
In \cite{JRT11p2} we do a numerical study for the case of the 
Forced Logistic Map and some modifications of it. Concretely we consider 
the family of maps in the cylinder $\T\times \R$ defined by: 
\begin{equation}
\label{FLM family}
\left.
\begin{array}{rcl}
\bar{\theta} & = & \theta + \omega  ,\\
\bar{x} & = &  \alpha x(1-x) + \eps g(\theta,x)  ,
\end{array}
\right\}
\end{equation}
with $\omega$ a Diophantine number. 

In \cite{JRT11p2} we  did  the following discoveries. 

\begin{itemize}

\item {\bf First numerical observation:}  the sequence $\alpha_n'(\omega)/ 
\alpha_{n-1}'(\omega)$ is not convergent in $n$. But, for $\omega$ fix, 
one obtains the same sequence for any family of quasi-periodic forced maps, with 
a quasi-periodic forcing of the type $g(\theta,x) =  f_1(x) \cos(\theta) 
+ f_2(x) \sin(\theta)$. 
\item {\bf Second numerical observation:}  the sequence $\alpha_n'(\omega)/ 
\alpha_{n-1}'(2\omega)$ is convergent in $n$ when 
the quasi-periodic forcing of the type $g(\theta,x) = f_1(x) \cos(\theta) 
+ f_2(x) \sin(\theta)$. The limit depends on $\omega$ 
and on the particular family considered. 
\item {\bf Third numerical observation:} the two previous observations 
are not true when the quasi-periodic forcing is of the 
type $g_\eta(\theta,x) = f_1(x) \cos(\theta) + \eta f_2(x) \cos(2\theta)$
when $\eta\neq 0$. But the sequence $\alpha_n'(\omega)/ 
\alpha_{n-1}'(2\omega)$ associated to the map (\ref{FLM family}) 
with $g= g_\eta$ is $\eta$-close to the same maps with $g= g_0 $
\end{itemize}

In this paper we give a theoretical explanation in terms of 
the dynamics of the quasi-periodic  renormalization operator.
In section \ref{section review q-p renor} we review the concepts 
and results from \cite{JRT11a} that are necessary for this. 
In section \ref{subsection reduction to the dynamics} we reduce the 
study of the asymptotic behavior of the sequences $\alpha_n'(\omega)/ 
\alpha_{n-1}'(\omega)$ to the dynamics of the quasi-periodically 
forced renormalization operator. In sections \ref{subsection 
explanation 1st observation}, \ref{subsection 
explanation 2nd observation} and \ref{subsection 
explanation 3rd observation}
 we give a theoretical explanation to each of the three numerical 
observations described above.

\section{Review on quasi-periodic renormalization} 
\label{section review q-p renor}


Here we summarize the ideas and results developed in \cite{JRT11a} which are 
essential for the discussion. 
Consider a 
quasi-periodic forced map like 
\begin{equation}
\label{q.p. forced system interval}
\begin{array}{rccc}
F:& \T\times I &\rightarrow & \T \times I \\
  & \left( \begin{array}{c} \theta \\ x \end{array}\right)
  & \mapsto
  & \left( \begin{array}{c} \theta + \omega \\ f(\theta,x) \end{array}\right),
\end{array} 
\end{equation}
with $f\in C^r(\T\times I ,I)$. To define the 
renormalization 
operator it is only necessary that $r \geq 1$. 
For simplicity the exposition done here is restricted to the analytic case.  
Along section \ref{section definition and basic properties} 
it is not necessary to require $\omega$ to be  Diophantine, but 
it will be necessary in section \ref{chapter application}. 

Note that the map $F$ (\ref{q.p. forced system interval}) is 
completely determined by the couple $(\omega, f)$. 
From now on we consider $\omega$ fixed and we focus only on the 
function $f$. The definition of the operator is done in a perturbative way, 
in the sense that it is only applicable to maps 
$f(\theta,x) = g(x) + h(\theta,x)$ with $g$ renormalizable in 
the one dimensional case and $h$ small. 

\subsection{Definition of the operator and basic properties} 
\label{section definition and basic properties}

\subsubsection{Preliminary notation} 

Let $\W$ be an open set in the complex plane containing the
interval $I_\delta=[-1-\delta, 1+\delta]$ and let
$\B_\rho = \{z = x + i y\in \C \text{ such that } |y| < \rho\}$.   Then 
consider  $\BB=\BB(\B_\rho,\W)$ the space of functions
$f: \B_\rho \times  \W \rightarrow \C$ such that:
\begin{enumerate}
\item  $f$ is holomorphic in $\B_{\rho}\times \W$ and continuous
in the closure of  $\B_{\rho}\times \W$.
\item $f$  is real analytic. 

\item  $f$ is $1$-periodic in the first variable, i. e.
$f(\theta +1,z) = f(\theta,z)$ for any $(\theta,z) \in \B_{\rho}
\times\W$.
\end{enumerate}
This space, endowed with the supremum norm, is a Banach space. 

Let $\RHH(\W)$  denote the space of functions 
real analytic functions such that are holomorphic
in $\W$, continuous in the closure of  $\W$. 
This is also a Banach space with the supremum norm. 

Consider the operator
\begin{equation}
\label{equation projection 0}
\begin{array}{rccc}
p_0:& \BB &\rightarrow & \RHH(\W) \\
\displaystyle \rule{0pt}{3ex} & f(\theta,x) & \mapsto &  \rule{0pt}{3ex}
\displaystyle  \int_{0}^{1} f(\theta, x) d\theta .
\end{array} 
\end{equation}
Let $\BB_0$ the natural inclusion of $\RHH(\W) $ 
into $\BB$ then we have that $p_0$  as a map from $\BB$ to $\BB_0$ 
is a projection  ($(p_0)^2 = p_0$).

\subsubsection{Set up of the one dimensional renormalization operator. }

First let us give a concrete definition of the 
one dimensional renormalization operator before extending 
it to the quasi-periodic case. Actually, we tune the 
definition of the operator given in \cite{Lan82} in order to be able 
to add a quasi-periodic perturbation.

Given a small value $\delta$, let  $\MM_\delta$ denote the 
subspace of $\RHH(\W)$ formed by the even functions $\psi$ which 
send the interval $I_\delta=[-1-\delta,1+\delta]$ into itself, and such that
$\psi(0)=1$ and  $x \psi'(x) <0$ for $x\neq 0$.

Set $a=\psi(1)$, $a'= (1+\delta)a$ and $b'=\psi(a')$. We 
can define $\DD(\RR_\delta)$ as the set of $\psi \in \MM_\delta$ 
such that $a<0$, 
$1> b'>-a'$,
and  $\psi(b') <- a'$.

We define the renormalization operator, $\RR_\delta : 
\DD(\RR_\delta) \rightarrow \MM_\delta$ as 
\begin{equation}
\label{renormalization operator lanford}
\RR_\delta(\psi) (x) =  \frac{1}{a} \psi \circ \psi (a x).
\end{equation}
where $a=\psi(1)$.

For maps $\psi \in \DD(\RR_\delta)$ such that $\psi\left(a\W\right)\subset \W$
we have that $\RR_\delta(\psi)$ is well defined.

For convenience, we introduce the following working hypothesis. 

\begin{description}
\item[H0)] There exists an open set $\W\subset \C$ containing 
$I_\delta$ and a function $\Phi\in \BB \cap \XX_0$ such 
that $\phi= p_0(\Phi) $ is a fixed point of the  
renormalization operator $\RR_\delta$  and such that 
the closure of both $a\W$ and $\phi(\Phi)(a \W)$ is 
contained in $\W$ (with $a:=\Phi(1)$). 
\end{description}

In \cite{Lan92}, it is claimed that the hypothesis {\bf H0} is satisfied 
by the set
\[
\left\{ z\in \C \text { such that } |z^2 -1| < \frac{5}{2} \right\}.
\]

This set used by Lanford is more convenient in his study
since he works in the
set of even holomorphic functions. In the numerical computations
from \cite{JRT11c} we use as $\W$ the disc centered at
$\frac{1}{5}$ with radius $\frac{3}{2}$,
and we check the hypothesis {\bf H0} numerically
(without rigorous bounds).

\subsubsection{Definition of the renormalization operator for  
quasi-periodically forced maps}

Consider the space $\XX\subset \BB$ defined as: 
\[
\XX= \{ f \in C^r(\T \times I_\delta, I_\delta) | \thinspace 
 p_0(f) \in \MM_\delta\}. 
\]
Consider also the decomposition $\XX=\XX_0 \oplus \XX_0^c$ given by the projection 
$p_0$. In other words, we have $\XX_0=\{f \in \XX \thinspace |\text{ } p_0(f)=f\}$ and 
$\XX^c_0=\{f \in \XX \thinspace | \text{ } p_0(f)=0\}$. Note 
that from the definition of $\XX$ follows that $\XX_0$ is an isomorphic 
copy of $\MM_\delta$.  

Given a function $g\in \XX$, we  
define 
the {\bf quasi-periodic renormalization} of $g$ as 
\begin{equation}
\label{operator tau}
[\TT_\omega(g)](\theta,x) :=  \frac{1}{\hat{a}} g(\theta + \omega,
g(\theta, \hat{a}x)),
\end{equation}
where $\displaystyle \hat{a} = \int_{0}^{1} 
g(\theta, 1) d\theta$. 

Then we have that there exist a set $\DD(\TT)$, 
open in $\left(p_0 \circ \TT_\omega\right)^{-1} (\MM_\delta)$, 
where the operator is well defined, in the sense that 
$\hat{a} \neq 0$.  
Moreover this set contains  $\DD_0(\TT)$, the inclusion of $\DD(\RR)$ in 
$\BB$. By definition we have that $\TT_\omega$ restricted 
to $\DD_0(\TT)$ is isomorphically conjugate to $\RR$, therefore 
the fixed points of $\RR$ extend to fixed points of $\TT_\omega$. 
Assume that {\bf H0} holds and let $\Phi$ be 
the fixed point given by this hypothesis. Then we have that there exists 
$U\subset \DD(\TT)\cap\BB$, an open neighborhood of $\Phi$, 
such that $\TT_\omega :U\rightarrow \BB$ is well defined. 
Moreover we have that $\TT_\omega$ is Fr\'echet differentiable for any  $\Psi \in U$. 

\subsubsection{Fourier expansion of $D\TT_\omega(\Psi)$.} 
\label{section The Fourier expansion of DT} 

Let $\Psi$ be a function in a neighborhood of $\Phi$ (given in hypothesis
{\bf H0}) where $\TT_\omega$ is differentiable. Additionally 
assume that $\Psi\in \DD_0(\TT_\omega)$.

Given a function $f\in \BB$ we can consider its complex Fourier
expansion in the periodic variable
\begin{equation}
\label{fourier expansion complex}
f(\theta,z)= \sum_{k\in \Z} c_k(z) e^{2\pi k\theta i }, 
\end{equation}
with
\[
c_k(z)= \int_{0}^{1} f(\theta,z) e^{-2\pi k\theta i} d\theta.
\]

Then we have that $D\TT_\omega$ ``diagonalizes''
with respect to the complex Fourier expansion, in the
sense that we have 
\begin{equation}
\label{equation Fourier expansion differential renormalization operator}
\left[D\TT_\omega(\Psi) f\right](\theta,z) = D\RR_\delta[c_0](z) 
+  \sum_{k\in \Z\setminus\{0\}} \left([L_1(c_k)](z) + 
[L_2(c_k)](z) e^{2\pi k\omega i}\right) e^{2\pi k\theta i },
\end{equation}
where
\[
\begin{array}{rccc}
L_1: & \RHH(\W) & \rightarrow & \RHH(\W)  \\
  &   g(z)  & \mapsto & 
\displaystyle  \frac{1}{a} \psi'\circ\psi(a z) g(az),
\end{array} 
\]
and 
\[
\begin{array}{rccc}
L_2: & \RHH(\W) & \rightarrow & \RHH(\W)  \\   &   g(z)  & \mapsto &
\displaystyle  \frac{1}{a} g\circ\psi(a z),
\end{array}
\] with $\psi=p_0(\Psi)$ and 
$a=\psi(1)$. 

An immediate consequence of this diagonalization is the following. Consider 
\begin{equation}
\label{equation spaces bbk}
\BB_k:= \big\{ f \in B |\text{ } f(\theta, x) = u(x) \cos(2\pi k \theta) + 
v(x) \sin(2\pi k\theta), \text{ for 
some } u,v\in \RHH(\W)\big\},
\end{equation} 
then we have that the spaces $\BB_k$ are
invariant by $D\TT(\Psi)$ for any $k>0$. 

Moreover $D\TT_\omega (\Psi)$ restricted to $\BB_k$ is
conjugate to $\LL_{k\omega}$, where $\LL_\omega$ is the defined 
as 
\begin{equation}
\label{equation maps L_omega}
\begin{array}{rccc}
\LL_\omega: &  \RHH(\W)\oplus  \RHH(\W) & \rightarrow 
&  \RHH(\W)\oplus  \RHH(\W)  \\ \\
& \left( \begin{array}{c} u \\v \end{array} \right) &
\mapsto &  \left( \begin{array}{c}  L_1(u) \\ L_1(v)
\end{array} \right) +
\left( \begin{array}{cc}  \cos(2\pi \omega) & - \sin(2\pi \omega)  \\
\sin(2\pi \omega) & \cos( 2\pi \omega) 
\end{array} \right)  \left( \begin{array}{c}  L_2(u) \\
L_2(v) 
\end{array} \right) . 
\end{array}
\end{equation}

Then we have that the understanding of the 
derivative of the renormalization operator in $\BB$ is equivalent 
to the study of the operator $\LL_\omega$ for any $\omega\in \T$.

\subsubsection{Properties of $\LL_\omega$} 

Given a value $\gamma \in \T$, consider the rotation $R_{\gamma}$ defined
as
\begin{equation}
\label{equation rotation rgamma}
\begin{array}{rccc}
R_\gamma: &  \RHH(\W)\oplus  \RHH(\W) & \rightarrow
&  \RHH(\W)\oplus  \RHH(\W)  \\ \\
& \left( \begin{array}{c} u \\v \end{array} \right) &
\mapsto &
\left( \begin{array}{cc}  \cos( 2\pi \gamma) & - \sin(2\pi \gamma)  \\
\sin(2\pi \gamma) & \cos(2\pi \gamma)
\end{array} \right)  \left( \begin{array}{c} u \\ v
\end{array} \right) ,
\end{array}
\end{equation}
then we have that $\LL_\omega$ and $R_\gamma$ commute
for any $\omega, \gamma \in \T$. 

This has some consequences on the spectrum of $\LL_\omega$. Concretely 
we have that 
any eigenvalues of $\LL_\omega$ (different from zero) is either real with 
geometric multiplicity even, or a pair of complex conjugate eigenvalues. 
On the other hand $\LL_\omega$ depends analytically
on $\omega$, which (using theorems 
III-6.17 and VII-1.7 of \cite{Kat66}),  imply that 
(as long as the eigenvalues of $\LL_\omega$ are different) 
the eigenvalues and their associated eigenspaces depend 
analytically on the parameter $\omega$. 

Finally, doing some minor changes on the domain of definition, 
we can prove the compactness of $\LL_\omega$. 
Recall that the compactness of an operator implies that its
spectrum is either finite or countable with $0$ on its 
closure (see for instance theorem III-6.26 of \cite{Kat66}). 

\subsection{Reducibility loss and quasi-periodic renormalization}
\label{chapter application} 

Given a map $F$ like (\ref{q.p. forced system interval}) with 
$f\in\BB$ and $\omega\in\T$ we 
denote by $f^{n}:\T \times \R \rightarrow \R$ the $x$-projection
of $F^n(x,\theta)$. Equivalently $f^n$ can be defined through the recurrence
\begin{equation}
\label{definicio falan}
f^n(\theta, x) = f( \theta +(n-1) \omega, f^{n-1}(\theta,x)). 
\end{equation}

From this point on, whenever  $\omega$ is used, it is assumed to 
be Diophantine.  Denote by $\Omega= \Omega_{\gamma,\tau}$
the set of Diophantine numbers,
that is the set of $\omega\in \T$
such that there exists $\gamma >0$ and $\tau \geq 1$
such that
\[
|q \omega - p| \geq \frac{\gamma}{|q|^{\tau}}, \quad 
\text{ for all } (p,q) \in \Z \times (\Z \setminus \{0\}). 
\]

Additionally, we will need to assume that the following conjecture is true.  

\begin{conj}
\label{conjecture H2} 
The operator $\TT_{\omega}$ (for any $\omega\in\Omega$) is an injective 
function when restricted to the domain $\BB\cap \DD(\TT)$. Moreover, 
there exist $U$ an open set of $\DD(\TT)$ containing $W^u(\Phi,\RR)
\cup W^s(\Phi,\RR)$\footnote{Here $W^s(\Phi,\RR)$ and $W^u(\Phi,\RR)$ 
are considered as the inclusion in $\BB$ of the 
stable and the unstable manifolds of the fixed point $\Phi$ (given by 
{\bf H0}) by the map $\RR$ in the topology of $\BB_0$.
}
where the operator $\TT_\omega$ is differentiable.
\end{conj}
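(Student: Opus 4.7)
My plan is to split the conjecture into its two assertions---differentiability on an open neighborhood of $W^s(\Phi,\RR)\cup W^u(\Phi,\RR)$ and global injectivity on $\BB\cap\DD(\TT)$---and treat them separately.

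For the differentiability part, I would exploit the explicit composition formula (\ref{operator tau}). The operator $\TT_\omega$ is analytic, and hence Fr\'echet differentiable, on any open set of $\BB$ on which (i) the normalization $\hat{a}=\int g(\theta,1)\,d\theta$ does not vanish and (ii) the inner composition $(\theta,z)\mapsto g(\theta,\hat{a}z)$ has image contained in $\W$ for $(\theta,z)\in\B_\rho\times\W$. Both conditions are open in $\BB$ and are satisfied at $\Phi$ by hypothesis \textbf{H0}, giving differentiability on a neighborhood $U_\Phi$ of $\Phi$. To extend to a neighborhood of $W^s(\Phi,\RR)\cup W^u(\Phi,\RR)$, I would use that these manifolds are invariant under $\RR$ and hence under $\TT_\omega|_{\BB_0}$, so that every point on them can be reached from $U_\Phi\cap\BB_0$ by finitely many iterates of $\RR$ (on $W^s$) or of a local inverse branch (on $W^u$). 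Transporting $U_\Phi$ along such orbits using continuity of $\TT_\omega$ and concatenating the resulting open pieces would produce the desired global neighborhood.

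For local injectivity I would apply the inverse function theorem near each $\Psi\in U$, which reduces the question to injectivity of $D\TT_\omega(\Psi)$. The Fourier diagonalization (\ref{equation Fourier expansion differential renormalization operator}) splits this further into injectivity of $D\RR_\delta(p_0(\Psi))$ on $\RHH(\W)$, which is classical from one-dimensional renormalization theory, together with injectivity of $L_1+e^{2\pi i k\omega}L_2:\RHH(\W)\to\RHH(\W)$ for every $k\in\Z\setminus\{0\}$. The latter functional equation reads
\[
\psi'(\psi(az))\,c(az)+e^{2\pi i k\omega}c(\psi(az))=0,
\]
and iterating it along the critical orbit of $\psi$ while invoking the Diophantine condition on $\omega$ to exclude resonances between the rotation angles $k\omega$ and the eigenstructure of $L_1,L_2$ should force $c\equiv 0$. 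Local injectivity of $\TT_\omega$ would then follow from the inverse function theorem.

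The remaining hurdle is promoting local to global injectivity, and this is where I expect the main obstacle to lie. A natural attempt is a path-integration argument: given $g_1,g_2\in\BB\cap\DD(\TT)$ with $\TT_\omega(g_1)=\TT_\omega(g_2)$, and a path $\{g_t\}_{t\in[0,1]}\subset\DD(\TT)$ joining them, one writes
\[
0=\TT_\omega(g_1)-\TT_\omega(g_2)=\left(\int_0^1 D\TT_\omega(g_t)\,dt\right)(g_1-g_2),
\]
and concludes $g_1=g_2$ provided the integrated differential is injective. This demands both path-connectedness of $\DD(\TT)$ and uniform injectivity of $D\TT_\omega$ along the path, neither of which is obvious globally. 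This is precisely why the statement is posed as a conjecture: a complete proof would likely require either a precise description of the global geometry of $\DD(\TT)$, or a direct small-divisor/KAM-type inversion scheme that exploits the Diophantine assumption on $\omega$ to solve the functional equation $\TT_\omega(g)=H$ uniquely for $g$.
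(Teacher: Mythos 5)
There is no proof of this statement in the paper to compare against: Conjecture~\ref{conjecture H2} is explicitly posed as an unproven assumption, with the authors stating that the obstacles to a proof are discussed in \cite{JRT11a} and that the conjecture is supported only by the numerics in \cite{JRT11c}. You correctly recognize this at the end, which is the most important thing to get right here. That said, a few steps in your sketch contain genuine errors worth flagging rather than merely open gaps.

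First, your local-injectivity step misapplies the inverse function theorem: in a Banach space the IFT requires $D\TT_\omega(\Psi)$ to be a bounded linear \emph{isomorphism}, not merely injective. The paper explicitly notes that (after a mild shrinking of domain) $\LL_\omega$, which is $D\TT_\omega(\Psi)$ restricted to each Fourier block $\BB_k$, is compact; a compact operator on an infinite-dimensional Banach space is never surjective with bounded inverse, so surjectivity of $D\TT_\omega$ is doubtful and your route to local injectivity collapses. Second, in the extension of differentiability along the invariant manifolds you have the directions reversed: forward iterates of $\RR$ push points on $W^s(\Phi,\RR)$ \emph{toward} $\Phi$, so reaching a distant point of $W^s$ from $U_\Phi$ needs an inverse branch, while points on $W^u(\Phi,\RR)$ are the ones reached by forward iteration of $\RR$ from near $\Phi$. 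Third, the claim that iterating $\psi'(\psi(az))c(az)+e^{2\pi ik\omega}c(\psi(az))=0$ along the critical orbit of $\psi$ and invoking the Diophantine condition ``should force $c\equiv 0$'' is pure hope: the vanishing of $\psi'$ at the critical point gives you $c$ vanishing at one point of the postcritical orbit, but propagating this to all of $\W$ would require control of a small-divisor type functional equation, and nothing in the Diophantine condition on $\omega$ obviously interacts with the eigenstructure of $L_1,L_2$ in the way you suggest. Your closing diagnosis---that the real obstruction is the global geometry of $\DD(\TT)$ and the absence of a uniform inversion scheme---is consistent with the reasons the authors leave this as a conjecture, but the intermediate steps above would need to be repaired before even the local part could be considered settled.
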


In \cite{JRT11a} we discuss the difficulties for proving this 
conjecture, and in \cite{JRT11c} we show that the results obtained
assuming this conjecture are coherent with the numerical computations. 
Whenever the conjecture {\bf \ref{conjecture H2}} is needed for a 
result it is explicitly stated in the hypotheses. 

\subsubsection{Consequences for a two parametric family of maps} 
\label{Section consequences for a two parametric family of maps}

Consider a two parametric family of maps  $\{c(\alpha,\eps)\}_{(\alpha,\eps)\in A}$ 
contained in $\BB$, with $A = [a,b]\times[0,d]$ and $a$, $b$ 
and $d$ are real numbers (with $a<b$ and $0<d$). 
We assume that the dependency on the parameters is analytic.

Consider the following hypothesis on the family of maps. 
\begin{description}
\item[H1)]  The family $\{c(\alpha,\eps)\}_{(\alpha,\eps)\in A}$ uncouples for 
$\eps=0$, in the sense that the family $\{c(\alpha,0)\}_{\alpha\in[a,b]}$ 
does not depend on $\theta$ and it has a full cascade of 
period doubling bifurcations. We assume that the family 
$\{c(\alpha,0)\}_{\alpha\in[a,b]}$ crosses transversely the 
stable manifold of $\Phi$, the fixed point
of the renormalization operator, and each of 
the manifolds $\Sigma_n$ for any $n\geq 1$, where 
$\Sigma_{n}$ is the inclusion in $\BB$ of the 
set of one dimensional unimodal maps with 
a super-attracting $2^n$ periodic orbit.
\end{description}

In other words, we assume that the family $c(\alpha,\eps)$ 
can be written as, 
\[
c(\alpha,\eps)=c_0(\alpha) + \eps c_1(\alpha,\eps),
\]
with $\{c_0(\alpha)\}_{\alpha\in[a,b]}\subset \BB_0$  having a full 
cascade of period doubling bifurcations. 

Given a family $\{c(\alpha,\eps)\}_{(\alpha,\eps)\in A}$ satisfying 
the hypothesis {\bf H1},  let $\alpha_n$ be the parameter value for 
which the uncoupled family $\{c(\alpha,0)\}_{\alpha\in[a,b]}$
intersects the manifold $\Sigma_n$. Note that the critical 
point of the map $c(\alpha_n,0)$ is a $2^n$-periodic orbit.
Our main achievement in \cite{JRT11a} is 
to prove that from every 
parameter value $(\alpha_n,0)$ there are born two curves in 
the parameter space, each of them corresponding to a reducibility 
loss bifurcation. Now we introduce some technical definitions 
in order to give a more precise statement of this result.

Let $\RHH(\B_\rho,\W)$ denote the space of 
periodic real analytic maps from $\B_\rho$ to $\W$ and 
continuous in the closure of $\B_\rho$. 
Consider a map $f_0\in \BB$ and $\omega \in \Omega$, such that $f$ has a 
periodic invariant curve $x_0$ of rotation number $\omega$
with a  Lyapunov exponent bounded by certain $-K_0<0$. 
Using lemma 3.6 in \cite{JRT11a} we have that 
there exist a neighborhood $V\subset \BB$ of $f_0$ and a map 
$x\in \RHH(\B_\rho, \W)$ such that $x(f)$ is a periodic 
invariant curve of $f$  for any $f\in V$. 
Then we can define
the map $G_1 $ as
\begin{equation}
\label{equation definition G1}
\begin{array}{rccc}
G_1:& \Omega\times V &\rightarrow & \RHH(\B_\rho,\C) \\
\rule{0pt}{3ex} &  (\omega,g) & \mapsto &  
D_x g \big(\theta+\omega,g(\theta, \left[x(\omega,g)\right](\theta))\big) 
D_x g \big(\theta,\left[x(\omega,g)\right](\theta)\big). 
\end{array}
\end{equation}

On the other hand, we can consider the counterpart of the map 
$G_1$ in the uncoupled case. Given a map $f_0\in \BB_0$, consider 
$U\subset \BB_0$ a neighborhood of $f_0$ in the $\BB_0$ topology. 
Assume that $f_0$ has an attracting $2$-periodic 
orbit $x_0\in I$. Let $x=x(f)\in \W$ be the continuation of this 
periodic orbit for any $f\in U$.  We have  that $x$ depends 
analytically on the map, therefore it induces a map $x:U \rightarrow \W$. 
Then if we take $U$ 
small enough we have an analytic map 
$x:U\rightarrow \W$ such that $x[f]$ is a periodic orbit of period 2. 
Now we can consider the map
\begin{equation}
\label{equation definition widehatG1}
\begin{array}{rccc}
\widehat{G}_1:& U\subset \BB_0 &\rightarrow & \C \\
\rule{0pt}{3ex} &  f & \mapsto & D_x f \big(f(x[f])\big) D_x f \big( x[f] \big). 
\end{array}
\end{equation}

Note that $\widehat{G}_1$ corresponds to $G_1$ restricted to 
the space $\BB_0$ (but then $\widehat{G}_1(f)$ has to be 
seen as an element of $\RHH(\B_\rho,\W)$). 

Consider the sequences
\begin{equation}
\label{equation sequences corollary directions}
\begin{array}{rcll} 
\omega_k  & = & 2 \omega_{k-1},  & \text{ for }  k=1,..., n-1. \\ 
\rule{0ex}{4ex} 
f^{(n)}_k & = & \RR\left(f_{k-1}^{(n)}\right),
& \text{ for }  k=1,..., n-1. \\ 
\rule{0ex}{4ex} 
u^{(n)}_k & = & D \RR \left(f^{(n)}_{k-1}\right) u^{(n)}_{k-1},
&  \text{ for } k = 1, ..., n-1.\\
\rule{0ex}{4ex} 
v^{(n)}_k & = & D \TT_{\omega_{k-1}}  \left(f^{(n)}_{k-1}\right) v^{(n)}_{k-1},
&  \text{ for } k = 1, ..., n-1.
\end{array}
\end{equation}
with
\begin{equation}  
\label{equation sequences directions initial}
f^{(n)}_0  =  c(\alpha_n,0), \quad 
u^{(n)}_0  =  \partial_\alpha c(\alpha_n,0), \quad 
 v^{(n)}_0 =  \partial_\eps c(\alpha_n,0).  
\end{equation}

Note that $f^{(n)}_0$ tends to $W^s(\RR,\Phi)$ when $n$ grow. 
Then  $\{f^{(n)}_k\}_{0\leq k <n}$ attains to $W^s(\RR,\Phi) 
\cup W^u(\RR,\Phi)$ and consequently there 
exist $n_0$ s. t. $\{f^{(n)}_k\}_{0\leq k <n} \subset U$ , 
where $U$ is the neighborhood given in conjecture {\bf \ref{conjecture H2}}. 
If the conjecture is true, then the operator
 $\TT_\omega$ is differentiable in the orbit $\{f^{(n)}_k\}_{0\leq k <n} 
\subset U$.  

Consider the following 
hypothesis. 

\begin{description}
\item[H2)]  The family $\{c(\alpha,\eps)\}_{(\alpha,\eps)\in A}$ is such that 
\[
D G_1 \left(\omega_{n-1}, f^{(n)}_{n-1}\right) 
D\TT_{\omega_{n-2}}\left(f^{(n)}_{n-2}\right) \cdots 
D\TT_{\omega_0}\left(f^{(n)}_0\right) \partial_\eps c(\alpha_n,0),
\] 
has a unique non-degenerate minimum (respectively maximum) as a function from $\T$ to $\R$, 
for any $n\geq n_0$. 
\end{description}

Consider a family of maps $\{c(\alpha,\eps)\}_{(\alpha,\eps)\in A}$ 
such that the hypotheses {\bf H1} and {\bf H2} are satisfied
and  $\omega_0\in \Omega$. If the conjecture {\bf \ref{conjecture H2}} is 
true, then  theorem 3.8 in \cite{JRT11a} asserts that there exists 
$n_0$ such that, for any $n\geq n_0$, 
there exist two bifurcation curves around the 
parameter  value $(\alpha_n, 0)$,  such that they correspond to a
reducibility-loss bifurcation of the $2^n$-periodic invariant curve.
Moreover, these curves are locally expressed as 
$(\alpha_n + \alpha_n'(\omega) \eps + o(\eps),\eps)$ and 
$(\alpha_n^- + \beta_n'(\omega) \eps + o(\eps), \eps)$ with
\begin{equation}
\label{equation alpha n +}
\alpha'_n (\omega) = 
- \frac{m \left(
DG_1 \left(\omega_{n-1}, f^{(n)}_{n-1}\right) v_{n-1}^{(n)}
\right)
}{\rule{0ex}{3.5ex}  
D \widehat{G}_1 \left(f^{(n)}_{n-1}\right) u_{n-1}^{(n)}} ,
\end{equation}
and 
\begin{equation}
\label{equation alpha n -}
\beta'_n(\omega) = 
- \frac{M \left(
D G_1 \left(\omega_{n-1}, f^{(n)}_{n-1}\right) v_{n-1}^{(n)}
\right)
}{\rule{0ex}{3.5ex}  
D \widehat{G}_1 \left(f^{(n)}_{n-1}\right) u_{n-1}^{(n)}} ,
\end{equation}
where $G_1$ and $\widehat{G}_1$ are given by equations 
(\ref{equation definition G1}) and (\ref{equation definition widehatG1}), 
and $m$ and $M$ are the minimum and the maximum as operators, 
that is
\begin{equation}
\label{equation definition of minim} 
\begin{array}{rccc}
m:& \RHH(\B_\rho,\C) &\rightarrow & \R \\
\rule{0pt}{3ex} & g & \mapsto & \displaystyle \min_{\theta \in \T} g(\theta). 
\end{array}
\end{equation}
and
\begin{equation}
\label{equation definition of maximum} 
\begin{array}{rccc}
M:& \RHH(\B_\rho,\C) &\rightarrow & \R \\
\rule{0pt}{3ex} & g & \mapsto & \displaystyle \max_{\theta \in \T} g(\theta). 
\end{array}
\end{equation}

Now we can go back to the hypothesis {\bf H2}, which is not intuitive. 
Actually we can introduce a stronger condition which is much more easy to 
check. Moreover this condition is automatically satisfied by maps like 
the Forced Logistic Map. Consider a family of maps 
$\{c(\alpha,\eps)\}_{(\alpha,\eps)\in A}$ as before, 
satisfying hypothesis {\bf H1}. 

\begin{description}
\item[H2')]  The family $\{c(\alpha,\eps)\}_{(\alpha,\eps)\in A}$ is such that
the quasi-periodic perturbation $\partial_\eps c(\alpha,0)$ belongs to the set
$\BB_1$ (see equation (\ref{equation spaces bbk}))  for any value of 
$\alpha$ (with $(\alpha,0)\in A$). 
\end{description}

Proposition 3.10 in \cite{JRT11a} asserts that {\bf H2'} implies {\bf H2}.

\section{Universality for q.p. forced maps}
\label{section A not yet rigorous explanation} 

In \cite{JRT11p2} we have done a numerical study of the asymptotic 
behavior of the reducibility loss directions $\alpha_i'(\omega)$ 
of the FLM. This study is summarized in section \ref{sec:num obs}. 
Concretely we have done three different numerical observation 
on this asymptotic behavior, to which we refer as first, second 
and third numerical observations. On the other hand, formula 
(\ref{equation alpha n +}) provides an explicit expression for the reducibility 
loss directions $\alpha_i'(\omega)$ in terms of the quasi-periodic 
renormalization operator. In this section we propose  three different 
conjectures on the dynamics of the quasi-periodic renormalization 
operator which provide a suitable explanation 
to the numerical observations.

Due to the periodicity of the maps considered, the quasi-periodic 
renormalization has an intrinsic rotational symmetry. In section 
\ref{subsection rotational symmetry reduction} we reduce the symmetry 
by taken a suitable section, in a process analogous to a Poincar\'e section. 

In section \ref{subsection reduction to the dynamics} we reduce the 
problem to the dynamics of the q.p. renormalization operator. 
To do this it is necessary to introduce conjecture {\bf \ref{conjecture H3}}, 
in which we assume that the normal behavior of the 
operator for the iterates close to the stable and the unstable 
manifold is described by the linearization of the operator in 
the fixed point. 

Consider $\BB_1$ the space given by (\ref{equation spaces bbk}) for $k=1$.
In section \ref{subsection explanation 1st observation} we 
study the linearized dynamics of the 
renormalization operator but restricted to the space $\BB_1$. 
We use some symmetries of the map to perform some kind 
of ``Poincar\'e section'' of the operator.
Then we introduce conjecture {\bf \ref{conjecture H4}}, in which we require 
the ``Poincar\'e map'' to be contractive. Finally we present
theorem \ref{reduction alphas to dynamics of renormalization} 
which gives a theoretical explanation to the first numerical 
observation described in section \ref{section intro}. 

In section \ref{subsection explanation 2nd observation} we prove that, 
under appropriate hypotheses, the behavior associated to the first 
numerical observation implies the behavior associated to the second 
observation. In this section we introduce conjecture {\bf \ref{conjecture H5}},
which is necessary to check that the appropriate hypotheses are 
satisfied in the case of the Forced Logistic Map. 

In section \ref{subsection explanation 3rd observation} we analyze 
what happens when a map does not satisfy hypothesis {\bf H2'}, as it 
happened in sections \ref{subsection explanation 1st observation} and 
\ref{subsection explanation 2nd observation}. This analysis 
provides an explanation to the third numerical observation. 

All proofs have been moved to the end of their respective subsections to 
make the presentation clearer.  

\subsection{Rotational symmetry reduction} 
\label{subsection  rotational symmetry reduction}

Given a function $g:\T \times I_\delta \rightarrow I_\delta$ 
in $\BB$ we can consider the function $\tilde{g}$ defined as 
$\tilde{g}(\theta,x) = g (\theta+\gamma, x)$ for some $\gamma\in\T$. 
Maps like (\ref{q.p. forced system interval}) determined 
by $f=g$ or by $f=\tilde{g}$ exhibit essentially the same dynamics, 
although (from the functional point of view) they are not the same map. 
For example they have different Fourier expansion. 
Roughly speaking, this fact induce a rotational symmetry 
on the derivative of the quasi-periodic renormalization 
operator $\TT_{\omega}$. To follow with our study we need to 
remove this symmetry from the problem. 

Given $\gamma\in \T$,  consider the following auxiliary function 
\begin{equation}
\label{definition tgamma}
\begin{array}{rccc}
t_\gamma:&  \BB &\rightarrow &  \BB  \\
\rule{0pt}{5ex} & v(\theta, z) & \mapsto & v(\theta + \gamma, z). 
\end{array}
\end{equation}

Let $\BB_1$ be the subspace of $\BB$ defined by (\ref{equation spaces bbk}) 
for $k=1$. The space $\BB_1$ is indeed the image of the 
projection $\pi_1:\BB \rightarrow \BB$ defined as 
\begin{equation}
\label{projection pi_1}
\left[\pi_1 (v)\right](\theta , x)  =  
\left( \int_0^1 v(\theta,x) \cos(2\pi x) d\theta \right) \cos (2\pi\theta) + 
\left( \int_0^1 v(\theta,x) \sin(2\pi x) d\theta \right) \sin (2\pi\theta). 
\end{equation}

Given  $x_0\in \W \cap \R $ and $\theta_0\in\T$ we can 
also consider the sets
\[\BB_1' = \BB_1' (\theta_0,x_0) = 
\{f\in \BB_1 \thinspace | f(\theta_0,x_0)=0, \partial_\theta f(\theta_0,x_0)>0 \},\]
and 
\[\BB' = \BB' (\theta_0,x_0) = 
\{f\in \BB \thinspace | \pi_1(f) \in \BB_1' \}. 
\]

\begin{prop}
\label{proposion projections of BB1p}
For a fixed $x_0\in \W \cap \R$ and $\theta_0\in \T$, we have that 
$\BB_1' (\theta_0, x_0)$ is an open subset of a codimension 
one linear subspace of $\BB_1$. Moreover for any $v\in \BB_1 \setminus\{0\}$ 
there exists  a unique $\gamma_0\in \T$ such that $t_{\gamma_0}(v)\in \BB_1' 
(\theta_0,x_0)$. Therefore for any $v\in \BB$ such that $\pi_1(v) \in \BB_1 
\setminus\{0\}$ there exists  a unique $\gamma_0\in \T$ such that 
$t_{\gamma_0}(v)\in \BB'(\theta_0,x_0)$.
\end{prop}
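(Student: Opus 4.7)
My plan is to reduce everything to the two-dimensional picture obtained by parametrizing elements of $\BB_1$ via their Fourier coefficients. Any $v\in\BB_1$ admits a unique representation $v(\theta,x)=u(x)\cos(2\pi\theta)+w(x)\sin(2\pi\theta)$ with $u,w\in\RHH(\W)$. I would first note that $v(\theta_0,x_0)=u(x_0)\cos(2\pi\theta_0)+w(x_0)\sin(2\pi\theta_0)$ is a continuous linear functional on $\BB_1$, so its zero set is a closed codimension-one subspace $V\subset\BB_1$. Likewise $\partial_\theta v(\theta_0,x_0)=2\pi\bigl(w(x_0)\cos(2\pi\theta_0)-u(x_0)\sin(2\pi\theta_0)\bigr)$ is linear in $v$, so the strict inequality defines an open half-space. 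Hence $\BB_1'(\theta_0,x_0)$ is an open subset of the codimension-one linear subspace $V$, establishing the first claim.

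For the existence and uniqueness of $\gamma_0$, the key observation is simply that $t_\gamma(v)(\theta_0,x_0)=v(\theta_0+\gamma,x_0)$. Substituting the Fourier form of $v$ and grouping terms as $A\cos\bigl(2\pi(\theta_0+\gamma)-\phi\bigr)$, with $(A\cos\phi,A\sin\phi)=(u(x_0),w(x_0))$ and $A=\sqrt{u(x_0)^2+w(x_0)^2}$, one sees that $\gamma\mapsto t_\gamma(v)(\theta_0,x_0)$ is a nontrivial sinusoid in $\gamma$ of period one, provided $A>0$. Such a sinusoid has exactly two zeros per period: one at which the $\gamma$-derivative is positive (equivalently, $\partial_\theta t_{\gamma}(v)(\theta_0,x_0)>0$) and one at which it is negative. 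The first of these is the unique $\gamma_0\in\T$ with $t_{\gamma_0}(v)\in\BB_1'(\theta_0,x_0)$, and uniqueness follows at once from the sinusoidal structure.

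To extend from $\BB_1$ to $\BB$, I would observe that $t_\gamma$ commutes with the projection $\pi_1$: $\pi_1$ extracts the pair of Fourier modes at wavenumber one, a subspace preserved and internally mixed by translations of the $\theta$-variable. A direct computation from the defining integrals of $\pi_1$ and the formula for $t_\gamma$ yields $\pi_1\circ t_\gamma=t_\gamma\circ\pi_1$. Consequently $t_{\gamma_0}(v)\in\BB'(\theta_0,x_0)$ iff $t_{\gamma_0}(\pi_1 v)\in\BB_1'(\theta_0,x_0)$, and this reduces the general $\BB$ case to the $\BB_1$ case already settled.

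The main subtlety I expect to address concerns the degenerate case $A=0$, i.e. $u(x_0)=w(x_0)=0$. In that situation $v(\,\cdot\,,x_0)\equiv 0$, no rotation can produce a nonzero $\theta$-derivative at $(\theta_0,x_0)$, and both existence and uniqueness of $\gamma_0$ fail. The proposition therefore implicitly presupposes the non-degeneracy $(u(x_0),w(x_0))\neq(0,0)$, which is generic in $x_0$ for any fixed nonzero $v$ and is automatic in the applications that follow; I would flag this explicitly rather than sweep it under the rug. Beyond this bookkeeping the argument is linear functional analysis and elementary trigonometry, so I do not foresee further obstacles.
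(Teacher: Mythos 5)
Your argument follows essentially the same route as the paper: you represent $v\in\BB_1$ by its Fourier pair $(u,w)$, observe that $v\mapsto v(\theta_0,x_0)$ is a continuous linear functional whose kernel gives the codimension-one subspace (the paper invokes the evaluation map and a reference to \cite{AMR88} for its differentiability, which is overkill since linearity plus continuity already does the job), and reduce $\gamma\mapsto[t_\gamma(v)](\theta_0,x_0)$ to a one-periodic sinusoid whose two zeros are distinguished by the sign of the $\gamma$-derivative. Your amplitude-phase form $A\cos(2\pi(\theta_0+\gamma)-\phi)$ is cosmetically different from the paper's $\tilde A\cos(2\pi\gamma)+\tilde B\sin(2\pi\gamma)$ with the $\arctan$ formula, but it is the same computation. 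You also explicitly verify that $\pi_1$ commutes with $t_\gamma$ to deduce the last sentence of the proposition; the paper leaves that step implicit, so this is a small but genuine improvement in completeness.

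The most substantive thing you add is the flag on the degenerate case $u(x_0)=w(x_0)=0$, and you are right to raise it: for such a $v\in\BB_1\setminus\{0\}$, every $t_\gamma(v)$ vanishes identically at $x=x_0$, and $\partial_\theta[t_\gamma(v)](\theta_0,x_0)\equiv 0$, so no $\gamma_0$ exists. The paper's proof silently assumes $(\tilde A,\tilde B)\neq(0,0)$ (its $\arctan$ formula is undefined otherwise), so the proposition as stated is slightly too strong; the correct hypothesis is $\pi_1(v)(\,\cdot\,,x_0)\not\equiv 0$ rather than merely $\pi_1(v)\neq 0$. Since this non-degeneracy is enforced in the later sections by the assumption that the relevant projections onto $\BB_1$ are nonzero (and indeed must hold there for the constructions to make sense), the gap is in the statement rather than in the downstream use, but you were right not to sweep it under the rug.
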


Consider a two parametric family of maps $\{c(\alpha,\eps)\}_{(\alpha,\eps)
\in A}$ contained in $\BB$ satisfying the hypotheses {\bf H1} and {\bf H2}
as in section \ref{chapter application}. Consider also the reducibility 
loss bifurcation curves  associated to the $2^n$-periodic orbit with 
slopes given by (\ref{equation alpha n +}) and (\ref{equation alpha n -}).
The goal of this section is to use proposition 
\ref{proposion projections of BB1p} to express formulas  
(\ref{equation alpha n +}) and (\ref{equation alpha n -}) in 
terms of vectors in $\BB_1' (\theta_0, x_0)$. The case $\beta'_n(\omega)$ is
omitted from now on in the discussion since it is completely
analogous to the case considered here, one only has to replace the
appearances of a minimum by a maximum.

Consider the sequences $\{\omega_k\}$, $\{f^{(n)}_k\}$ and $\{u^{(n)}_k\}$ given by 
(\ref{equation sequences corollary directions}) and 
(\ref{equation sequences directions initial}). Consider now 
the sequence 
\begin{equation} 
\label{equation v_k in section} 
\tilde{v}^{(n)}_k = t_{\gamma\left(\tilde{v}^{(n)}_{k-1}\right)} 
\left( D \TT_{\omega_{k-1}}  \left(f^{(n)}_{k-1}\right) \tilde{v}^{(n)}_{k-1} \right)
\text{ for } k = 1, ..., n-1, 
\end{equation} 
and 
\begin{equation}
\label{equation v_0 in section} 
v^{(n)}_0 =  t_{\gamma_0} \left(\partial_\eps c(\alpha_n,0)\right), 
\end{equation} 
where $\gamma(\tilde{v}^{(n)}_{k-1})$ and $\gamma_0$ are chosen such 
that $\tilde{v}^{(n)}_{k}$ belongs 
to $\BB' (\theta_0, x_0)$ for $k=0,1, ..., n$. 
If the projection of $D\TT_{\omega_{k-1}}  \left(f^{(n)}_{k-1}
\right) \tilde{v}^{(n)}_{k-1}$ in $\BB_1$ is non zero, then 
$\gamma\left(\tilde{v}^{(n)}_{k-1}\right)$ is uniquely determined and 
the vectors $\tilde{v}^{(n)}_k$ are well defined.

\begin{thm} 
\label{thm symmetry reduction} 
Consider a family of maps $\{c(\alpha,\eps)\}_{(\alpha,\eps)\in A}$ 
such that the hypotheses {\bf H1} and {\bf H2} are satisfied. Assume 
also that $\omega_0\in \Omega$ and that the conjecture {\bf \ref{conjecture H2}} is 
true. Let $\{\omega_k\}$, $\{f^{(n)}_k\}$ and $\{u^{(n)}_k\}$ be defined by
(\ref{equation sequences corollary directions}) and
(\ref{equation sequences directions initial}) and 
$\tilde{v}^{(n)}_k$ be defined by (\ref{equation v_k in section}) 
and (\ref{equation v_0 in section}).  Assume also that 
the projection of $D\TT_{\omega_{k-1}}  \left(f^{(n)}_{k-1}
\right) \tilde{v}^{(n)}_{k-1}$ in $\BB_1$ (given by (\ref{projection pi_1})) 
is non zero.

Then the slopes $\alpha_n'$ of the reducibility loss bifurcations 
given by (\ref{equation alpha n +}) can be also written as
\begin{equation}
\label{equation alpha n + section}
\alpha'_n (\omega) = 
- \frac{m \left(
DG_1 \left(\omega_{n-1}, f^{(n)}_{n-1}\right) \tilde{v}_{n-1}^{(n)}
\right)
}{\rule{0ex}{3.5ex}  
D \widehat{G}_1 \left(f^{(n)}_{n-1}\right) u_{n-1}^{(n)}} ,
\end{equation}
where $G_1$,  $\widehat{G}_1$ and $m$ 
are given by equations (\ref{equation definition G1}),
(\ref{equation definition widehatG1}) and (\ref{equation definition of minim}). 
\end{thm}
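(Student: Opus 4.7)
The plan is to show that the two sequences $\{v^{(n)}_k\}$ and $\{\tilde v^{(n)}_k\}$ differ only by a global $\theta$-translation at each level, and then to use the translation invariance of the minimum functional $m$ together with the $t_\gamma$-equivariance of $G_1$ to conclude that the numerator of $\alpha'_n(\omega)$ in (\ref{equation alpha n +}) is unchanged when $v^{(n)}_{n-1}$ is replaced by $\tilde v^{(n)}_{n-1}$. The denominator involves only objects in $\BB_0$ and so is not affected.

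First I would observe that $\TT_\omega$ commutes with $t_\gamma$: a direct unfolding of (\ref{operator tau}) shows $\TT_\omega(t_\gamma g) = t_\gamma \TT_\omega(g)$, because $t_\gamma$ only shifts the first argument and the integral defining $\hat a$ is shift-invariant. Differentiating this identity in $g$ at a point $f \in \BB_0$ (for which $t_\gamma f = f$) gives $D\TT_\omega(f) \circ t_\gamma = t_\gamma \circ D\TT_\omega(f)$. Hypothesis {\bf H1} and the fact that the one-dimensional operator $\RR$ preserves $\BB_0$ imply $f^{(n)}_k \in \BB_0$ for every $k$, so this commutation applies at each step of the iteration. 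An induction using (\ref{equation v_k in section}) and (\ref{equation v_0 in section}) then yields $\tilde v^{(n)}_k = t_{\Gamma_k} v^{(n)}_k$ with $\Gamma_k = \gamma_0 + \gamma_1 + \cdots + \gamma_k \in \T$.

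A parallel argument establishes the equivariance of $G_1$. From the defining equation $x(\omega, g)(\theta + \omega) = g(\theta, x(\omega, g)(\theta))$ and the local uniqueness of the invariant curve provided by Lemma~3.6 of \cite{JRT11a}, one checks directly that $x(\omega, t_\gamma g)(\theta) = x(\omega, g)(\theta + \gamma)$. Substituting into (\ref{equation definition G1}) gives $G_1(\omega, t_\gamma g) = t_\gamma G_1(\omega, g)$, and differentiating at $g = f^{(n)}_{n-1} \in \BB_0$ yields
\[
DG_1\!\left(\omega_{n-1}, f^{(n)}_{n-1}\right) \circ t_{\Gamma_{n-1}} = t_{\Gamma_{n-1}} \circ DG_1\!\left(\omega_{n-1}, f^{(n)}_{n-1}\right).
\]
Hence $DG_1(\omega_{n-1}, f^{(n)}_{n-1})\, \tilde v^{(n)}_{n-1} = t_{\Gamma_{n-1}}\bigl(DG_1(\omega_{n-1}, f^{(n)}_{n-1})\, v^{(n)}_{n-1}\bigr)$, and since $m(t_\gamma h) = \min_{\theta\in\T} h(\theta+\gamma) = m(h)$ for every $\gamma$, the two numerators in (\ref{equation alpha n +}) and (\ref{equation alpha n + section}) coincide. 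The denominator is likewise unchanged because $u^{(n)}_k$ remains in $\BB_0$ throughout (as $u^{(n)}_0 = \partial_\alpha c(\alpha_n,0) \in \BB_0$ and $\RR$ preserves $\BB_0$), so $D \widehat G_1(f^{(n)}_{n-1})\, u^{(n)}_{n-1}$ has no $\theta$-dependence to alter.

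The only subtle point is the equivariance of $G_1$ under $t_\gamma$: one must verify not only that the explicit $g$-arguments transform as expected, but also that the implicit continuation $x(\omega, g)$ of the invariant curve is itself equivariant, for which the local uniqueness supplied by Lemma~3.6 of \cite{JRT11a} is essential. Everything else is a formal commutation of linear operators combined with the trivial shift-invariance of the minimum functional on $\T$; the hypothesis that the projection of $D\TT_{\omega_{k-1}}(f^{(n)}_{k-1})\tilde v^{(n)}_{k-1}$ into $\BB_1$ is non-zero is only needed to guarantee, via Proposition~\ref{proposion projections of BB1p}, that the rotations $\gamma(\tilde v^{(n)}_{k-1})$ (and hence $\tilde v^{(n)}_k$) are well defined.
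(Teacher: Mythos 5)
Your proof is correct and follows the same basic strategy as the paper---write $\tilde v^{(n)}_k = t_{\Gamma_k}v^{(n)}_k$, exploit the $t_\gamma$--equivariance of $DG_1$ and the shift-invariance of $m$---but you justify the two key commutation relations by a different route. For the commutation of $D\TT_\omega$ with $t_\gamma$, the paper's Lemma~\ref{lemma properties of tgamma} works through the Fourier expansion (\ref{equation Fourier expansion differential renormalization operator}) and is stated only at the fixed point $\Phi$, whereas you prove it by a one-line computation on (\ref{operator tau}) (using only the shift-invariance of the average defining $\hat a$) and observe that it holds at any base point in $\BB_0$, which is actually what the recurrence (\ref{equation v_k in section}) requires since it sits at $f^{(n)}_{k-1}$ rather than at $\Phi$. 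For the equivariance of $DG_1$, the paper's Lemma~\ref{lemma theorem proof of the universality conjecture} cites the explicit formula for $DG_1(\omega,f)$ from Prop.~3.12 of \cite{JRT11a}; you instead derive $G_1(\omega,t_\gamma g)=t_\gamma G_1(\omega,g)$ directly from the invariant-curve equation and the local uniqueness of $x(\omega,g)$ supplied by Lemma~3.6 of \cite{JRT11a}, and differentiate at the $\theta$-independent base point. Your approach is more self-contained and slightly cleaner on the point where the base map of $D\TT_\omega$ is a general element of $\BB_0$; the paper's is shorter where it can lean on previously established explicit formulas. You also make explicit that the denominator $D\widehat G_1(f^{(n)}_{n-1})\,u^{(n)}_{n-1}$ is untouched because everything lives in $\BB_0$, which the paper leaves implicit. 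Both proofs agree on where the non-vanishing-projection hypothesis enters: only to guarantee via Proposition~\ref{proposion projections of BB1p} that the shifts $\gamma(\tilde v^{(n)}_{k-1})$, and hence the $\tilde v^{(n)}_k$, are well defined.
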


\subsubsection{Proofs} 

\begin{lem}
\label{lemma properties of tgamma}
Consider the function $t_\gamma$ given by (\ref{definition 
tgamma}). 
\begin{enumerate}
\item For any  $f\in \BB$ and  $\gamma_1 , \gamma_2 \in \T$,
\[t_{\gamma_1+\gamma_2} (v) = t_{\gamma_1} \circ \ t_{\gamma_2} (v).\]

\item For any $f\in \BB$ and $\gamma \in \T$ we have  $\|t_\gamma (f) \| = \|f\|$ 
(recall that the norm of $\BB$ considered is 
the supremum norm in $\B_\rho \times\W$).

\item Let $\TT_\omega:  \DD(\TT) \rightarrow \BB$ be the renormalization operator, and
$\Phi$ a fixed point. Then we have that $t_\gamma$ and the differential of 
$\TT_\omega$ in
the fixed point commute. In other words, we have
\[
\left[ t_\gamma \circ D\TT_\omega(\Phi)  \right](v) = 
\left[ D\TT_\omega(\Phi)\right](t_\gamma(v)),
\]
for any $v\in \BB$ and $\gamma \in \T$.
\end{enumerate}
\end{lem}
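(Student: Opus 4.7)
The plan is straightforward verification from the definitions of $t_\gamma$ and $\TT_\omega$, with only part~3 containing any real content.

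For part~1, I would just chase the definitions: $[t_{\gamma_1}\circ t_{\gamma_2}(v)](\theta,z) = [t_{\gamma_2}(v)](\theta+\gamma_1,z) = v(\theta+\gamma_1+\gamma_2,z) = [t_{\gamma_1+\gamma_2}(v)](\theta,z)$. For part~2, the key observation is that the complex strip $\B_\rho$ is invariant under real translations $\theta\mapsto\theta+\gamma$, and every $f\in\BB$ is $1$-periodic in $\theta$. Hence $t_\gamma$ is a bijective isometry of $\BB$ in the supremum norm: $\|t_\gamma(f)\| = \sup_{(\theta,z)\in\B_\rho\times\W}|f(\theta+\gamma,z)| = \sup_{(\theta,z)\in\B_\rho\times\W}|f(\theta,z)| = \|f\|$, since the shift is a bijection of $\B_\rho\times\W$ that leaves $|f|$ invariant modulo the period.

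For part~3, I would first show the stronger statement that $\TT_\omega$ itself commutes with $t_\gamma$ on the full domain $\DD(\TT)$. The average $\hat a(g) = \int_0^1 g(\theta,1)\,d\theta$ is translation-invariant in $\theta$, so $\hat a(t_\gamma g) = \hat a(g)$, and a direct substitution into \eqref{operator tau} gives
\[
[\TT_\omega(t_\gamma g)](\theta,x) = \frac{1}{\hat a(g)}\, g\bigl(\theta+\gamma+\omega,\, g(\theta+\gamma,\hat a(g)\,x)\bigr) = [t_\gamma\circ \TT_\omega(g)](\theta,x).
\]
Since the fixed point $\Phi\in\XX_0\subset\BB_0$ is independent of $\theta$, we have $t_\gamma(\Phi)=\Phi$. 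Differentiating the identity $\TT_\omega\circ t_\gamma = t_\gamma\circ \TT_\omega$ at the point $\Phi$ via the chain rule, and using that $t_\gamma$ is linear and bounded (so $Dt_\gamma(\cdot)=t_\gamma$), yields
\[
D\TT_\omega(\Phi)\circ t_\gamma \;=\; t_\gamma\circ D\TT_\omega(\Phi),
\]
which is exactly the commutation claimed. Alternatively one can read this off from the Fourier expansion \eqref{equation Fourier expansion differential renormalization operator}: $t_\gamma$ multiplies the $k$-th Fourier coefficient of $v$ by $e^{2\pi k\gamma i}$, and $D\TT_\omega(\Phi)$ acts diagonally across Fourier modes via the operators $L_1$ and $L_2$, so the two operations commute termwise.

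There is no real obstacle; the entire lemma is bookkeeping. The only subtle point worth spelling out is that $\TT_\omega$ commutes with $t_\gamma$ globally and not merely infinitesimally, a fact which in turn rests solely on the translation-invariance of the normalizing average $\hat a$ and on $\Phi$ being $\theta$-independent so that it is itself a fixed point of $t_\gamma$.
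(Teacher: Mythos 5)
Your parts~1 and~2 match the paper's proof verbatim in spirit. For part~3 you take a genuinely different route: you first establish the stronger global identity $\TT_\omega\circ t_\gamma = t_\gamma\circ\TT_\omega$ on $\DD(\TT)$ (which rests only on the translation-invariance of $\hat a$), then differentiate by the chain rule, using linearity of $t_\gamma$ and the fact that $t_\gamma(\Phi)=\Phi$ because $\Phi\in\BB_0$ is $\theta$-independent. The paper instead computes directly with the Fourier expansion \eqref{equation Fourier expansion differential renormalization operator}: $t_\gamma$ multiplies the $k$-th Fourier coefficient by $e^{2\pi k\gamma i}$, and since $D\TT_\omega(\Phi)$ acts coefficient-by-coefficient through the linear operators $L_1$, $L_2$, the two actions commute termwise. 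Both arguments are correct. Your approach has the small advantage of exposing the underlying reason for the symmetry — it is present already at the nonlinear level and is inherited by the linearization precisely because the fixed point itself is $t_\gamma$-invariant — and it generalizes immediately to give the equivariance $D\TT_\omega(t_\gamma\Psi)\circ t_\gamma = t_\gamma\circ D\TT_\omega(\Psi)$ at any $\Psi$, not just at $\Phi$. The paper's Fourier calculation, by contrast, works at any base point $\Psi\in\DD_0(\TT_\omega)$ where the diagonalized formula is valid and makes the mode-by-mode structure explicit, which is closer to how the commutation is actually used later (in the discussion of $\LL_\omega$). One small point worth making explicit in your version: you need $t_\gamma$ to preserve $\DD(\TT)$ so that the composed maps are defined; this holds because $t_\gamma$ commutes with the projection $p_0$ and hence preserves all the defining conditions, which are conditions on $p_0(g)$.
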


\begin{proof}
The first point follows easily since 
\[\left[t_{\gamma_1+\gamma_2}(v)\right](\theta,z) 
= v(\theta +\gamma_1+\gamma_2,z)
= \left[t_{\gamma_2}(v)\right](\theta+\gamma_1,z)= 
 \left[t_{\gamma_1}\left(t_{\gamma_2}(v)\right)\right](\theta,z).\]

For the second point of the proposition, recall that the norm considered 
in $\BB$ is the supremum norm in the set $\B_{\rho}\times\W$.
Using the invariance of this set by a translation on the first variable 
we have have
\[
\|v\| =\sup_{\B_{\rho}\times\W } | v(\theta,x)| = 
\sup_{\B_{\rho}\times\W} |v(\theta+\gamma,x)| =
\|t_\gamma(v)\|. 
\]

Let us focus now in the third point of the proposition. Given $v\in \BB$ 
consider its complex Fourier expansion on the $\theta$ variable. 
\[ 
v(\theta,z)= \sum_{k\in \Z} c_k(z) e^{2\pi k\theta i}. 
\]
Then we have that the complex Fourier expansion of the map $t_\gamma(v)$ 
is given by
\[ 
\left[t_\gamma(v)\right](\theta,z)= 
\sum_{k\in \Z} \left(e^{2\pi k\gamma i} c_k(z) \right)e^{2\pi k\theta i }. 
\]
Using this, the expansion of $D\TT_\omega(\Phi)v$ given by equation 
(\ref{equation Fourier expansion differential renormalization operator})
 and 
the linearity of the operators $L_1$ and $L_2$ we have 
\begin{eqnarray}
\left[D\TT_\omega(\Phi)t_\gamma(v)\right](\theta,z) & =& 
D\RR_\delta (c_0)(z) +  \sum_{k\in \Z\setminus\{0\}} 
\left[L_1\left(e^{2\pi k\gamma i} c_k(z) \right) + 
L_2\left(e^{2\pi k\gamma i} c_k(z) \right) e^{2\pi k\omega i } \right] 
e^{2\pi k\theta i }  \nonumber \\
&= &\rule{0ex}{4ex}  D\RR_\delta (c_0)(z) +  \sum_{k\in \Z\setminus\{0\}} 
\left[L_1\left(c_k(z) \right) + 
L_2\left( c_k(z) \right) e^{2\pi k\omega i } \right] 
e^{2\pi k(\theta+\gamma) i }  \nonumber \\
& = &\rule{0ex}{4ex}
\left[t_\gamma\left(D\TT_\omega(\Phi)(v)\right)\right](\theta,z).
\nonumber
\end{eqnarray}
\end{proof}


\begin{proof}[Proof of proposition 
\ref{proposion projections of BB1p}]
Consider the map $\Ev:\BB_1\rightarrow \R$ the 
evaluation map defined as $\Ev(v)=v(\theta_0,x_0)$. Note 
that the evaluation of a map in a given point is differentiable 
as a function (see proposition 2.4.17 in \cite{AMR88}). Then we have 
that set $\BB_1'(\theta_0,x_0)$ is 
an open subset of the set $\Ev^{-1}(0)$, which is 
a codimension one Banach space. We also have that $\Ev^{-1}(0)$ is a 
linear subspace because  $\Ev(\cdot)$ is a linear function. 

Let us focus now on the second part of the proposition. 
Given $v\in \BB_1$ we have that 
\[
v(\theta,z) = A(z)\cos(2\pi\theta) + B(z) \sin(2\pi \theta).
\] with $A$ and $B$ in $\RHH(\W)$. We have that 
\[[t_\gamma(v)](\theta_0,x_0)= A(x_0)\cos(2\pi(\theta_0+\gamma))
+ B(x_0) \sin(2\pi(\theta_0+\gamma))= \tilde{A} \cos(2\pi\gamma) 
+ \tilde{B} \sin(2\pi \gamma) ,\] 
with $\tilde{A} = A(x_0)\cos(2\pi\theta_0) +
B(x_0)\sin(2\pi\theta_0)$ and $\tilde{B}= B(x_0)\cos(2\pi \theta_0) 
- A(x_0)\sin(2\pi \theta_0)$. Then taking $\gamma_0 =
\frac{1}{2\pi}\arctan\left(-\frac{\tilde{A}}{\tilde{B}}\right)$ and
$\gamma_1=\gamma_0+\frac{1}{2}$ we have  that $t_{\gamma_0}(v)$ and 
$t_{\gamma_1}(v)$ belong to $\Ev^{-1}(0)$ but only one of them belongs 
to $\BB_1'(\theta_0,x_0)$. 
\end{proof}

\begin{proof}[Proof of theorem \ref{thm symmetry reduction}]
We will need the following lemma for the proof. 
\begin{lem} 
\label{lemma theorem proof of the universality conjecture} 
Consider the function $t_\gamma$ defined in 
equation (\ref{definition tgamma}) and   
the set $\Sigma_1$ of one dimensional 
unimodal maps such that its critical point is a two periodic orbit. 
Then for any $\omega\in \Omega$, $f\in\Sigma_1$ and $v\in\BB$ we have
that
\[m \left(D G_1 \left(\omega, f \right) v \right) = 
m \left(D G_1 \left(\omega, f \right) t_\gamma(v)\right),\]
for any $\gamma\in \T$. 
\end{lem}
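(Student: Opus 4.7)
The plan is to prove a rotational equivariance for $G_1$ and then specialize to a one-dimensional $f\in\Sigma_1\subset\BB_0$, for which $t_\gamma f=f$, and to combine this with the obvious shift-invariance of $m$ over the torus.

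The key structural fact I would establish first is that
\begin{equation*}
G_1(\omega,t_\gamma g)=t_\gamma G_1(\omega,g)
\end{equation*}
for every admissible $g$ in the neighborhood $V$ where $G_1$ is defined and every $\gamma\in\T$. The content of this identity lies in an equivariance of the invariant curve itself: if $\tilde x(\theta)$ denotes the invariant curve of $t_\gamma g$, then the candidate $\tilde x(\theta):=x(\omega,g)(\theta+\gamma)$ satisfies the defining equation $\tilde x(\theta+\omega)=(t_\gamma g)(\theta,\tilde x(\theta))=g(\theta+\gamma,\tilde x(\theta))$, so by the uniqueness of the invariant curve in a small neighborhood (lemma 3.6 of \cite{JRT11a}) the two agree. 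Substituting into the definition (\ref{equation definition G1}) and making the change of variable $\theta\mapsto\theta+\gamma$ then yields the claimed equivariance.

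Next I would use that $f\in\Sigma_1\subset\BB_0$ is $\theta$-independent, so $t_\gamma f=f$ for every $\gamma\in\T$. By linearity of $t_\gamma$ one has $f+\epsilon\,t_\gamma v=t_\gamma(f+\epsilon v)$, and the chain rule combined with the equivariance of $G_1$ and with the boundedness and linearity of $t_\gamma$ (part 2 of lemma \ref{lemma properties of tgamma}) gives
\begin{equation*}
DG_1(\omega,f)\bigl(t_\gamma v\bigr)=\frac{d}{d\epsilon}\bigg|_{\epsilon=0}G_1\bigl(\omega,t_\gamma(f+\epsilon v)\bigr)=\frac{d}{d\epsilon}\bigg|_{\epsilon=0}t_\gamma G_1(\omega,f+\epsilon v)=t_\gamma\,DG_1(\omega,f)\,v.
\end{equation*}

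Finally, since $m$ computes the minimum of a continuous function over the torus and the torus is invariant under rotations, $m(t_\gamma h)=\min_{\theta\in\T}h(\theta+\gamma)=m(h)$ for any $h\in\RHH(\B_\rho,\C)$. Combining this shift-invariance with the previous display gives
\begin{equation*}
m\bigl(DG_1(\omega,f)\,t_\gamma v\bigr)=m\bigl(t_\gamma\,DG_1(\omega,f)\,v\bigr)=m\bigl(DG_1(\omega,f)\,v\bigr),
\end{equation*}
which is the claim. The only nontrivial step is the equivariance of $G_1$, whose proof I expect to be the main obstacle: it requires the uniqueness of the invariant curve provided by the implicit function construction of lemma 3.6 of \cite{JRT11a}; once that is in hand the rest is a direct substitution.
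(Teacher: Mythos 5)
Your proof is correct, but it takes a genuinely different route to the central intertwining identity $\tilde{t}_\gamma \circ D G_1(\omega,f) = D G_1(\omega,f)\circ t_\gamma$ than the paper does. The paper obtains this identity by direct inspection of the \emph{explicit formula} for $D G_1(\omega,f)$ given in proposition 3.12 of \cite{JRT11a} (which is available precisely because $f\in\Sigma_1$), and then combines it with the shift-invariance of $m$. You instead prove the \emph{nonlinear} equivariance $G_1(\omega,t_\gamma g)=\tilde{t}_\gamma\,G_1(\omega,g)$ first, using that $\tilde x(\theta):=[x(\omega,g)](\theta+\gamma)$ solves the invariance equation for $t_\gamma g$ and invoking the local uniqueness of the invariant curve from lemma 3.6 of \cite{JRT11a}; you then differentiate at the $\theta$-independent $f$ (so that $t_\gamma f=f$) to deduce the linearized identity. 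Both arguments then close with the same observation that $m$ is unchanged by a shift of $\theta$. Your approach buys a conceptual explanation — the equivariance is a structural symmetry of the full nonlinear $G_1$, not an accident of the linearization — and it does not require the explicit formula for $D G_1$. The paper's approach is shorter once that formula is in hand and sidesteps two small points that you gloss over: that $t_\gamma g$ actually lies in the domain $V$ on which the curve-selection map $x(\omega,\cdot)$ is defined (this holds in your setting since $t_\gamma(f+\epsilon v)=f+\epsilon\,t_\gamma v$ is a small perturbation of $f$, but it is worth stating), and that the shifted curve is indeed the one selected by lemma 3.6 (which follows because both are the unique invariant curve near the constant curve of $f$). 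One notational caution: $G_1(\omega,g)$ lies in $\RHH(\B_\rho,\C)$, so the shift acting on it is the paper's $\tilde{t}_\gamma$ on functions of $\theta$ alone, not the $t_\gamma$ of (\ref{definition tgamma}) that acts on elements of $\BB$; your display should use the former on the output side.
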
 

\begin{proof}
Let $\tilde{t}_\gamma:C(\T,\R) \rightarrow C(\T,\R)$ be the 
operator defined as $[\tilde{t}_\gamma(p)](\theta) =p(\theta +\gamma)$. 
Note that for any function $p:\T \rightarrow \R$  and $\gamma \in \T$ 
we have that 
\[m(p)= \min_{\theta\in\T} p(\theta) = \min_{\theta\in\T} p(\theta+ \gamma) = 
\min_{\theta\in\T}[\tilde{t}_\gamma (p)](\theta)= m\left(\tilde{t}_\gamma (p)\right).\] 
Hence, 
\[
m \left(D G_1 \left(\omega, f \right) v \right) = 
 \min_{\theta \in \T} \left[
\tilde{t}_\gamma \left( D G_1 \left(\omega, f \right) v \right)  \right].
\]

Since $f\in\Sigma_1$ we have that 
$D G_1 \left(\omega, f \right)$ is explicitly given by 
proposition 3.12 in \cite{JRT11a}. Using 
this it is easy to check that 
$\tilde{t}_\gamma \circ D G_1 \left(\omega, f \right) 
v= D G_1 \left(\omega, f \right) t_\gamma(v)$. Applying 
this to the equation above, the result follows. 
\end{proof}

Using lemma \ref{lemma theorem proof of the universality conjecture} 
we have 
\[
m \left( D G_1 \left( \omega_k, f^*_{1} \right) , v_k \right) =  
m \left( D G_1 \left( \omega_k, f^*_{1} \right) ,  t_\gamma \left(v_k \right) \right),
\]
with $\gamma$ any value in $\T$. 
Since the value $\gamma$ is arbitrary, we can 
choose $\gamma= \gamma_k + \tilde{\gamma}$ with $\gamma_k$ and 
$\tilde{\gamma}$ any values in $\T$.
Recall now that the values $\omega_n$ and $v_n$ 
are defined by  the recurrence (\ref{equation sequences directions simplified}). 
Using these recurrences and the first and third properties of 
proposition \ref{lemma properties of tgamma} we have that 
\[
t_\gamma(v_k) = t_{\gamma_{k}}\left(t_{\tilde{\gamma}}
\left(D\TT_{\omega_{k-1}} (\Phi) v_{k-1}\right) \right) = 
t_{\gamma_{k}}\left(D\TT_{\omega_{k-1}} (\Phi) t_{\tilde{\gamma}}
\left(v_{k-1}\right) \right).
\]
This can be reproduced at every step of the recurrence in such a way 
that the sequence $v_k$ for $k=0,\dots , n$ can be replaced 
by the sequence $t_{\gamma_k}(v_k)$ without loss of generality. 

By hypothesis we have that the projection of $\TT_{\omega_{k-1}}  \left(f^{(n)}_{k-1}
\right) \tilde{v}^{(n)}_{k-1}$ in $\BB_1$ is non zero. We can 
apply proposition \ref{proposion projections of BB1p}, 
then the values of $\gamma_k$ can be chosen in such a way that 
$t_{\gamma_k}(\TT_{\omega_{k-1}}  \left(f^{(n)}_{k-1}
\right) \tilde{v}^{(n)}_{k-1})$ belongs to $\BB_1'$ for any $k\geq0$.
\end{proof} 

\subsection{Reduction to the dynamics of the renormalization operator}
\label{subsection reduction to the dynamics}

Consider a two parametric family of maps $\{c(\alpha,\eps)\}_{(\alpha,\eps)
\in A}$ contained in $\BB$ satisfying the hypotheses {\bf H1} and {\bf H2}
as in section \ref{chapter application}. Consider also the reducibility
loss bifurcation curves  associated to the $2^n$-periodic orbit with
slopes $\alpha'_n(\omega)$ and $\beta'_n(\omega)$ given by 
(\ref{equation alpha n +}) and (\ref{equation alpha n -}).
As in section \ref{subsection  rotational symmetry reduction} we 
omit the case concerning $\beta'_n(\omega)$ since it is completely 
analogous to the case concerning $\alpha'_n(\omega)$. The goal of this section 
is to reduce the problem of describing the asymptotic behavior 
of ${\alpha'_n(\omega_0,c_1)}/{\alpha'_{n-1}(\omega_0,c_1)}$ to 
the dynamics of the quasi-periodic renormalization operator. 

\begin{defin}
\label{definition equivalence of banach sequences}
Given two sequences $\{r_i\}_{i\in \Z_+}$ and $\{s_i\}_{i\in \Z_+}$ in 
a Banach space, we will say that they are {\bf asymptotically
equivalent} if there exists $0<\rho <1$ and $k_0$ such that
\[
\| r_i - s_i\| \leq k_0 \rho^i \quad \forall i\in \Z_+ .
\]
We will commit an abuse of notation and 
denote this equivalence relation by $s_i \sim r_i$ instead of 
$\{r_i\}_{i\in \Z_+} \sim \{s_i\}_{i\in \Z_+}$.
\end{defin}

Let us remark that it should be more precise to speak about geometric
asymptotic equivalence, but the word geometrically has been 
omitted for simplicity.

Given a family $\{c(\alpha,\eps)\}_{(\alpha,\eps)\in A}$ 
satisfying the hypotheses  {\bf H1} and {\bf H2} as before
and a
fixed Diophantine rotation number $\omega_0$, consider 
$\omega_k$ , $f^{(n)}_k$, $u^{(n)}_k$
given by
(\ref{equation sequences corollary directions}) and 
$\tilde{v}^{(n)}_k$ given by (\ref{equation v_k in section}), 
with $ f^{(n)}_0$ and $u^{(n)}_0$ given by (\ref
{equation sequences directions initial}) and 
$v^{(n)}_0$ given by (\ref{equation v_0 in section}). 
Note that $f^{(n)}_k$, $u^{(n)}_k$ and $v^{(n)}_k$ depend on $c$,
the family of maps considered, and  the vectors $v^{(n)}_k$ depend also 
on the initial value of the rotation number $\omega_0$. 
In general this dependence will be omitted to keep the notation simple. 
If two different families or two different values of the rotation number should 
be considered then we will make the dependence explicit.

Let  $\alpha^*$ denote the parameter value such that the
family $\{c(\alpha,0)\}_{(\alpha,0)\in A}$ intersects
with $W^s(\Phi, \RR)$ and $f_j^*$ denote the intersection 
of $W^u(\Phi, \RR)$ with the manifold $\Sigma_j$. Consider then
\begin{equation}
\label{equation sequences directions simplified}
\begin{array}{rcl} 
\rule{0ex}{2.5ex}\omega_k & = & 2 \omega_{k-1}, \text{ for }  k = 1, ..., n-1, \\
\rule{0ex}{4.5ex} 
u_k & = &\displaystyle  \left\{ \begin{array}{ll} 
\rule{0ex}{2.5ex} \displaystyle   D \RR \left(\Phi \right) u_{k-1},  
      &  \text{ for } k = 1, \dots, [n/2]-1, \\ 
\rule{0ex}{2.5ex} \displaystyle   D \RR \left(f_{n-k}^* \right) u_{k-1},  
      &  \text{ for } k = [n/2], \dots, n-1. 
\end{array} \right.  \\
\\\rule{0ex}{4.5ex} 
v_k & = & \displaystyle \left\{ \begin{array}{ll} 
\rule{0ex}{2.5ex} \displaystyle 
t_{\gamma\left(\tilde{v}_{k-1}\right)}
\left( D\TT_{\omega_{k-1}} \left(\Phi \right) v_{k-1} \right),
   & \text{ for } k = 1, ..., [n/2]-1,\\
\rule{0ex}{2.5ex} \displaystyle  t_{\gamma\left(\tilde{v}_{k-1}\right)}
\left( D\TT_{\omega_{k-1}} \left(f_{n-k}^*\right) v_{k-1}\right), 
   &\text{ for } k=[n/2],\dots, n-1.
\end{array} \right. \\
\end{array}
\end{equation}
with 
\[  u_0  =  \partial_\alpha c(\alpha^*,0), \quad 
 v_0 =  t_{\gamma_0} \left( \partial_\eps c(\alpha^*,0) \right),   \]
and $\gamma(\tilde{v}_{s-1})$ and $\gamma_0$ are chosen such
that $\tilde{v}^{(n)}_{s}$ belongs
to $\BB_1' (\theta_0, x_0)$ for any $s=1, ..., n$.

\begin{conj} 
\label{conjecture H3}
For any family of maps $\{c(\alpha,\eps)\}_{(\alpha,\eps)\in A}$ 
satisfying {\bf H1 } and {\bf H2}, 
assume that 
\[
\frac{\tv^{(n)}_{n-1}}{\|\tv^{(n)}_{n-1} \|} \sim \frac{v_{n-1}}{\|v_{n-1} \|},
\]
with $\tv^{(n)}_{n-1}$ and $v_{n-1}$ given by 
(\ref{equation v_k in section}) and 
(\ref{equation sequences directions simplified}). 
 Also assume that there exists a constant $C>0$ such that 
\[
\|v_{n-1}\| > C \text{ for any }  n > 0 .  
\]
Finally assume  that there exists a constant $C_0>0$ such that 
\[\left|m \left( DG_1\left(\omega_{n-1},f^*_1,
\frac{v_{n-1}}{\|  v_{n-1} \|}\right) \right) \right|> C_0,\]
for any $n\geq0$ and $\omega_0$ Diophantine, where 
$m$ is given by (\ref{equation definition of minim}), 
$G_1$ by (\ref{equation definition G1}) and $\{f_1^*\}= 
W^u(\RR,\Phi)\cap\Sigma_1 $. 
\end{conj}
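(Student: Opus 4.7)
The plan is to exploit the hyperbolicity of $\RR$ at the fixed point $\Phi$ to establish a shadowing result connecting the actual orbit $\{f^{(n)}_k\}_{k=0}^{n-1}$ to the heteroclinic skeleton $\Phi \to W^u(\Phi,\RR) \to f_1^*$ used to define $v_k$. Since $c(\alpha_n,0) \to c(\alpha^*,0) \in W^s(\Phi,\RR)$ with transverse crossing (by {\bf H1}), the orbit spends its first roughly $n/2$ iterates approaching $\Phi$ along the stable direction at a rate governed by the stable spectral radius of $D\RR(\Phi)$, and its last roughly $n/2$ iterates leaving $\Phi$ along $W^u(\Phi,\RR)$ at expansion rate $\deltabf$. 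Concretely, I would first prove $\|f^{(n)}_k - \Phi\| \le C\rho^{k}$ for $k \le [n/2]-1$ and $\|f^{(n)}_k - f^*_{n-k}\| \le C\rho^{\,n-k}$ for $k \ge [n/2]$, for some $\rho<1$; these are standard consequences of $\lambda$-lemma type arguments in the hyperbolic setting of $\RR$.

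For part (a) I would argue via a telescoping identity. Writing $\widetilde{D}_k = D\TT_{\omega_{k-1}}(f^{(n)}_{k-1})$ and $D_k$ for the corresponding ``idealized'' derivative appearing in (\ref{equation sequences directions simplified}), Fr\'echet differentiability of $\TT_\omega$ together with the shadowing bounds above give $\|\widetilde{D}_k - D_k\| \le C\rho^{\min(k,\,n-k)}$. Inserting this in the product $\widetilde{D}_{n-1}\cdots\widetilde{D}_1$ yields, after subtracting the product $D_{n-1}\cdots D_1$ and bounding telescopically, a total error controlled by a geometric series, provided one has uniform operator bounds on the $\widetilde{D}_k$ along the orbit (these follow from compactness of a neighborhood of the heteroclinic and continuity of $D\TT_\omega$). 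Since the rotational reduction $t_{\gamma(\cdot)}$ is an isometry on $\BB$ (lemma \ref{lemma properties of tgamma}), it does not affect the estimate, and the result passes to the normalized vectors to give asymptotic equivalence in the sense of Definition \ref{definition equivalence of banach sequences}.

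For part (b), I would use that by {\bf H2'} the initial vector $v_0$ lives in $\BB_1$, on which $D\TT_{\omega_k}(\Phi)$ acts as the compact operator $\LL_{\omega_k}$ of (\ref{equation maps L_omega}). A lower bound on $\|v_{n-1}\|$ then follows from a uniform lower bound on the projection of $v_0$ onto the leading spectral component of $\LL_{\omega}$; since $\LL_{\omega}$ depends analytically on $\omega$ and its largest eigenvalues vary continuously (by the Kato theorems already invoked in the text), this projection varies continuously in $\omega$, and one needs to exclude the possibility that some $\omega$ lies on the vanishing locus. For part (c), I would combine the asymptotic equivalence of (a) with compactness of $\T$: extract a subsequential limit $(\omega^*, v^*)$ of $(\omega_{n-1}, v_{n-1}/\|v_{n-1}\|)$ and use $f^{(n)}_{n-1} \to f_1^*$, continuity of $DG_1$, and the non-degeneracy expressed by {\bf H2} to conclude that $m(DG_1(\omega^*, f_1^*, v^*)) \ne 0$; a uniform lower bound then follows from compactness of the joint phase.

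The principal obstacle I expect is the uniformity in parts (b) and (c). For Diophantine $\omega_0$, the doubling map iterates $\omega_n = 2^n\omega_0 \pmod 1$ explore $\T$ densely, so the required bounds must hold essentially for \emph{all} $\omega \in \T$. This forces one to exclude, for every $\omega \in \T$, both the vanishing of the spectral projection of $v_0$ onto the leading eigenspace of $\LL_\omega$ and the degeneracy of the minimum of $DG_1(\omega, f_1^*, v(\omega))$, where $v(\omega)$ is the limit direction produced by the linearized dynamics. I do not see how to rule these out purely from the axioms {\bf H0}--{\bf H2'} without additional structural information about the joint spectrum of the family $\{\LL_\omega\}_{\omega\in\T}$ and its interaction with the explicit form of $DG_1$ at $f_1^*$ (computed in Proposition 3.12 of \cite{JRT11a}); this is the reason the statement is formulated as a conjecture rather than proved.
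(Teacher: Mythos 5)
The labeled statement is Conjecture~\ref{conjecture H3}, and the paper does \emph{not} prove it: it is stated as an assumption, motivated only by the heuristic paragraph that follows (passage of the orbit $\{f^{(n)}_k\}$ near the saddle fixed point $\Phi$, the $\lambda$-lemma intuition, and the remark that it is ``the typical behavior''), and then checked numerically in \cite{JRT11c}. There is therefore no paper proof against which to compare your attempt; the most that can be said is whether your argument could promote the conjecture to a theorem, and you yourself conclude in the last paragraph that it cannot. That conclusion is correct, and it agrees with the paper's decision to leave the statement conjectural.

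Beyond that, your sketch of part~(a) has a concrete gap that is worth making explicit, because it shows the obstruction is not only in parts (b)--(c). The telescoping bound you propose controls the \emph{unnormalized} difference: with $\|\widetilde{D}_k - D_k\| \le C\rho^{\min(k,n-k)}$ and operator norms $\|D_k\|,\|\widetilde{D}_k\| \le C\sigma$, each telescoping term is of size $\sim \sigma^{n}\rho^{\min(j,n-j)}$, and the sum over $j$ is $O(\sigma^{n})$. But $\|v_{n-1}\|$ itself is of order $\sigma^{n}$, so after normalization this yields only $O(1)$, not the geometric decay required by Definition~\ref{definition equivalence of banach sequences}. To push the normalized difference down geometrically you need the products $D_{n-1}\cdots D_{j+1}$ to \emph{contract angles}, i.e.\ a spectral-gap/projective-contraction property of the linearized dynamics at $\Phi$. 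That is precisely the content of Conjecture~\ref{conjecture H4}, which is a separate assumption in the paper; invoking it here to prove Conjecture~\ref{conjecture H3} would amount to trading one conjecture for another. A second, smaller point: in part~(b) you lean on {\bf H2'} ($v_0 \in \BB_1$), but Conjecture~\ref{conjecture H3} is stated only under {\bf H1} and {\bf H2}, so that restriction is not available to you. Your identification of the uniformity problem in parts~(b) and~(c) --- the doubling orbit $2^n\omega_0 \bmod 1$ is dense, forcing the nondegeneracy to hold on essentially all of $\T$ --- is exactly the right diagnosis of why these bounds are not derivable from the axioms and why the authors verify them numerically instead.
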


\begin{figure}[t]
\centering
\input{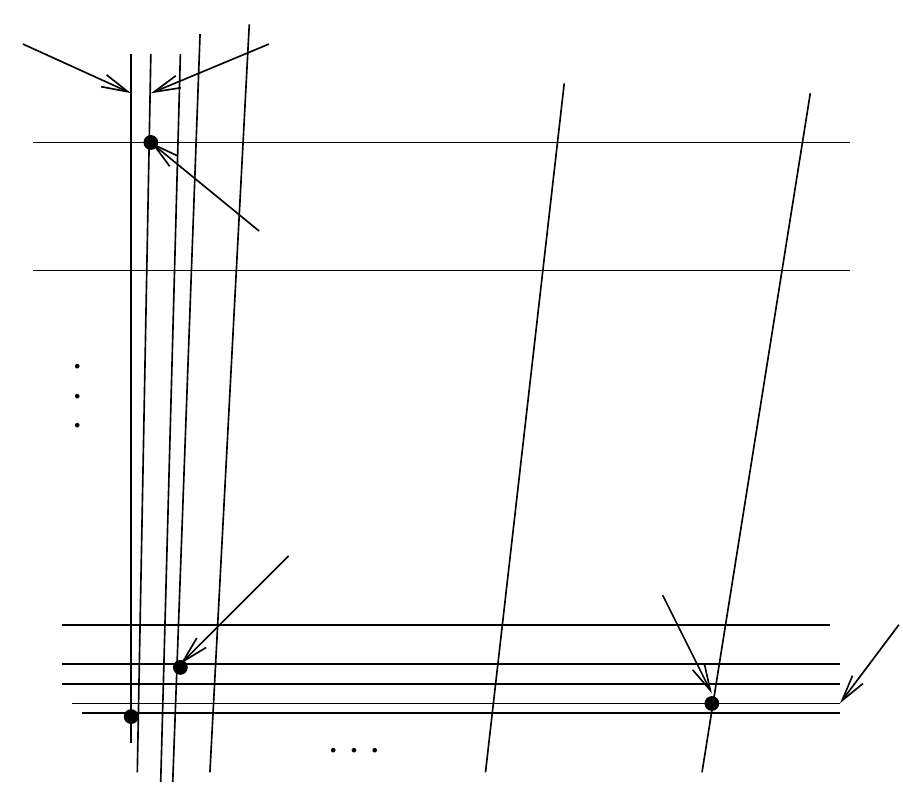_t}
\caption{Representation of the dynamics of $\RR$ around 
its fixed point $\Phi$, see the text for more details. 
}
\label{Esquema Varietats}
\end{figure}

In other words we assume that the asymptotic behavior of the 
vectors  $\tv^{(n)}_{n-1}$ is determined by the 
linearization of the renormalization operator in the 
fixed point. Moreover we assume that the modulus of the 
vector does not decrease to zero. In figure \ref{Esquema Varietats} we 
have a schematic representation of the orbit $f^{(n)}_k$ with respect 
to the fixed point $\Phi$ and its stable and unstable manifolds $W^s(\Phi,\RR)$ and 
$W^u(\Phi,\RR)$. We have that the orbit of $f_0^{(n)}$ corresponds 
to a passage near a saddle point. Note that the initial point $f_0^{(n)}$ is always
in $\{c(\alpha,0)\}_{(\alpha,0) \in A}$, the final point $f_{n-1}^{(n)}$ is 
always in $\Sigma_1$ for any $n$, and the orbit of the points spends more 
and more iterates in a neighborhood of $\Phi$ when $n$ is increased. 

To justify conjecture \ref{conjecture H3}, let us remark that the initial point 
$f_0^{(n)}=c(\alpha_n,0)$ corresponds to the family $c(\alpha,0)$ intersected
with the manifold $\Sigma_n$. On the other hand, the point $f_{n-1}^{(n)}$
corresponds to the intersection of $\RR^n (c(\alpha,0))$ with $\Sigma_1$. When
$n$ is increased we have that $f_0^{(n)}$ converges to the intersection
of $c(\alpha,0)$ with the stable manifold $W^s(\Phi,\RR)$ and 
$f_{n-1}^{(n)}$ converges to 
the intersection or $\Sigma_1$ with the unstable manifold $W^u(\Phi,\RR)$. 
Moreover when $n$ is increased the intermediate points  $f_k^{(n)}$  
spend more and more iterates in an arbitrarily small neighborhood of 
$\Phi$. Actually, this is the typical behavior of passages close to a 
saddle fixed point. Then we can expect that the asymptotic behavior of 
the vectors $\tv^{(n)}_{n-1}$ is determined by the dynamics of the fixed point.
The  last part of the conjecture  can be understood as a kind of uniform 
transversality of the vectors $\frac{v_{n-1}}{\|v_{n-1}\|}$ with respect 
to the manifold defined by the zeros of this function. 
This conjecture is checked numerically for the case of the Forced 
Logistic Map in \cite{JRT11c}. 

Finally we will need the following extension of the hypothesis {\bf H2} 

\begin{description}
\item[H3)]   Consider a two parametric family of maps 
$\{c(\alpha,\eps)\}_{(\alpha,\eps)\in A}$ (with $A\subset\R^2$)
satisfying
{\bf H1} and {\bf H2} and a fixed Diophantine rotation 
number $\omega_0$. Consider also $\omega_n$ and $v_n$ given 
by (\ref{equation sequences directions simplified}) 
and the point  $\{f^*_{1}\} = W^u(\RR,\Phi)\cap \Sigma_1$.  
We assume that $DG_1(\omega_{n-1}, f^*_1 )
 v^{(n)}_{n-1}$ has a unique non-degenerate minimum for 
any $\omega_0\in \Omega$ and $n\geq 0$. Assume also that
the projection of $D\TT_{\omega_{k-1}}  \left(f^{(n)}_{k-1}
\right) \tilde{v}^{(n)}_{k-1}$ in $\BB_1$ given by (\ref{projection pi_1})
is non zero.
\end{description}

Using the notation and the hypotheses introduced so far, we have the following 
results on the asymptotic behavior of the quotients
$\frac{\alpha_n'(c,\omega_0)}{ \alpha_{n-1}'(c,\omega_0)}$.

\begin{thm}
\label{reduction alphas to dynamics of renormalization}
Let $\{c(\alpha,\eps)\}_{(\alpha,\eps)\in A}$ (with $A\subset\R^2$) be 
a two parametric family of q.p. forced maps 
satisfying {\bf H1}, {\bf H2} and {\bf H3}. Suppose that 
$\omega_0 $ is Diophantine ($\omega_0 \in \Omega$). 
Consider the loss of reducibility 
directions $\alpha_n'(c,\omega_0)$ and the sequences $u_n$ and $v_n$ given by 
(\ref{equation sequences directions simplified}). Additionally assume that 
conjectures {\bf \ref{conjecture H2}} and {\bf \ref{conjecture H3}} are true. Then we have that 
\begin{equation}
\label{equation theorem reduction to dynamics of renormalization}
\frac{\alpha_n'(c,\omega_0)}{ \alpha_{n-1}'(c,\omega_0)} \sim
\deltabf^{-1} \cdot \frac{
\displaystyle m \left( DG_1\left(\omega_{n-1}, f^*_{1} , 
\frac{v_{n-1}}{\|v_{n-1}\|} \right) \right)
}{ \rule{0pt}{4ex} \displaystyle 
m \left( DG_1\left(\omega_{n-2},  f^*_1,  \frac{v_{n-2}}
{\|v_{n-2}\|} \right) \right)
}\cdot 
\left\| D\TT_{\omega_{n-2}}(f_2^*) \frac{ v_{n-2}}{\|v_{n-2}\|}
\right\|, 
\end{equation} 
where $m$ is given by (\ref{equation definition of minim}),
$G_1$ by (\ref{equation definition G1}), $\{f_1^*\}= 
W^u(\RR,\Phi)\cap\Sigma_1$ is the intersection of the unstable 
manifold of $\RR$ at the fixed point $\Phi$ with the manifold 
$\Sigma_1$ and $\deltabf$ is the universal Feigenbaum constant.
\end{thm}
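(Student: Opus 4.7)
The plan is to expand the ratio $\alpha'_n/\alpha'_{n-1}$ using the symmetry-reduced formula \eqref{equation alpha n + section} and to split the result into three asymptotic contributions: a factor $\deltabf^{-1}$ coming from classical one-dimensional Feigenbaum scaling; a ratio of minima that, after normalisation and continuity, matches the first fraction on the right-hand side of \eqref{equation theorem reduction to dynamics of renormalization}; and a norm-ratio of the $v_n$'s that identifies the final $\bigl\|D\TT_{\omega_{n-2}}(f^*_2)\,v_{n-2}/\|v_{n-2}\|\bigr\|$ factor.

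First I would use \eqref{equation alpha n + section} to split the quotient as
\[
\frac{\alpha'_n(c,\omega_0)}{\alpha'_{n-1}(c,\omega_0)}
= \frac{m\bigl(DG_1(\omega_{n-1},f^{(n)}_{n-1})\,\tilde v^{(n)}_{n-1}\bigr)}{m\bigl(DG_1(\omega_{n-2},f^{(n-1)}_{n-2})\,\tilde v^{(n-1)}_{n-2}\bigr)}
\;\cdot\;
\frac{D\widehat G_1\bigl(f^{(n-1)}_{n-2}\bigr)\,u^{(n-1)}_{n-2}}{D\widehat G_1\bigl(f^{(n)}_{n-1}\bigr)\,u^{(n)}_{n-1}},
\]
calling the two factors $(A)$ and $(B)$. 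For $(B)$, I would invoke classical one-dimensional renormalization. Since $c(\alpha,0)$ crosses $W^s(\Phi,\RR)$ transversely at some $\alpha^*$ with $\alpha_n\to\alpha^*$ geometrically, the orbit $\{f^{(n)}_k\}_{0\le k<n}$ performs a saddle passage near $\Phi$ with $f^{(n)}_{n-1}\to f^*_1$ geometrically along $W^u(\Phi,\RR)$. The only eigenvalue of $D\RR(\Phi)$ outside the unit disk in $\BB_0$ is $\deltabf$, so a standard analysis of passages near a saddle gives that $u^{(n)}_{n-1}/\|u^{(n)}_{n-1}\|$ converges geometrically to the unit tangent $\hat u$ to $W^u(\Phi,\RR)$ at $f^*_1$, while $\|u^{(n)}_{n-1}\|/\|u^{(n-1)}_{n-2}\|\sim\deltabf$. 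Continuity of $D\widehat G_1$ and the non-degeneracy $D\widehat G_1(f^*_1)\,\hat u\neq 0$ (which reflects the transversality of $W^u(\Phi,\RR)$ to $\Sigma_1$ inside $\BB_0$) then yield $(B)\sim\deltabf^{-1}$.

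For $(A)$, I would use the linearity of $DG_1(\omega,f)$ in its vector argument to factor $\|\tilde v^{(n)}_{n-1}\|$ out of the numerator and $\|\tilde v^{(n-1)}_{n-2}\|$ out of the denominator. Conjecture~\ref{conjecture H3}(i) then lets me replace $\tilde v/\|\tilde v\|$ by $v/\|v\|$; the geometric convergence $f^{(n)}_{n-1}\to f^*_1$ together with continuity of $DG_1$ in its second argument lets me replace $f^{(n)}_{n-1}$ by $f^*_1$; and the uniform lower bound in conjecture~\ref{conjecture H3}(iii) upgrades these corrections to a genuine geometric asymptotic equivalence of the minima in the sense of Definition~\ref{definition equivalence of banach sequences}. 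This produces the two $m\bigl(DG_1(\cdot,f^*_1)\,v_\bullet/\|v_\bullet\|\bigr)$ factors of the statement multiplied by the norm ratio $\|\tilde v^{(n)}_{n-1}\|/\|\tilde v^{(n-1)}_{n-2}\|$. For this last ratio, use $\tilde v^{(n)}_{n-1}=t_{\gamma(\tilde v^{(n)}_{n-2})}\bigl(D\TT_{\omega_{n-2}}(f^{(n)}_{n-2})\tilde v^{(n)}_{n-2}\bigr)$ and the isometry of $t_\gamma$ from lemma~\ref{lemma properties of tgamma}; combined with the geometric convergence $f^{(n)}_{n-2}\to f^*_2$, continuity of $\Psi\mapsto D\TT_{\omega_{n-2}}(\Psi)$ in the operator norm (via the Fréchet differentiability of $\TT_\omega$), and conjecture~\ref{conjecture H3}(i)--(ii), this yields the factor $\bigl\|D\TT_{\omega_{n-2}}(f^*_2)\,v_{n-2}/\|v_{n-2}\|\bigr\|$ and concludes the proof once the three pieces are assembled.

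The main obstacle is the saddle-passage analysis behind $(B)\sim\deltabf^{-1}$: one must show not merely that $u^{(n)}_{n-1}/\|u^{(n)}_{n-1}\|\to\hat u$ and that norms grow as $\deltabf^n$, but that both convergences are \emph{geometric} in $n$ so as to fit Definition~\ref{definition equivalence of banach sequences}. This requires a quantitative linearization of $\RR$ at $\Phi$ together with careful bookkeeping of the two ends of the passage, namely the transversal crossing of $\{c(\alpha,0)\}$ with $W^s(\Phi,\RR)$ on one side and the convergence to $f^*_1\in W^u(\Phi,\RR)\cap\Sigma_1$ on the other. This purely one-dimensional bookkeeping, and not anything involving the quasi-periodic operator $\TT_\omega$, carries the real technical weight; the quasi-periodic content of the theorem is concentrated in the clean book-keeping allowed by conjecture~\ref{conjecture H3} and the isometry property of $t_\gamma$.
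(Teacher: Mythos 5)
Your proposal is correct and follows essentially the same route as the paper: split the ratio into the $D\widehat G_1$-factor and the $m(DG_1)$-factor (the paper's $A_n(c)$ and $B_n(c)$), derive $\deltabf^{-1}$ from the $\lambda$-lemma for the uncoupled saddle passage, and handle the quasi-periodic factor via linearity, the isometry of $t_\gamma$, continuity of $DG_1$ and $D\TT_\omega$, and conjecture~\ref{conjecture H3}. The only cosmetic difference is that the paper packages the continuity and asymptotic-equivalence bookkeeping into preliminary lemmas (\ref{lemma equivalencia multiplicacio}, \ref{lemma equivalencia divisio}, \ref{lemma equivalencia funcio diferenciable}), whereas you run those arguments inline; your caveat about needing genuinely geometric rates in the saddle passage is apt, and is exactly what the paper's appeal to the $\lambda$-lemma (together with proposition 3.21 of \cite{JRT11a}) is meant to supply.
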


The interpretation of this result, which will become clearer in section 
\ref{subsection explanation 1st observation}, is the following. Let  $c_1$ and $c_2$ be
two families of q.p. forced maps satisfying {\bf H1} ,  {\bf H2} and
{\bf H3} and $\omega_0$ a Diophantine number. Consider the loss of
reducibility directions $\alpha_n'(c_i,\omega_0)$ associated to each family of
maps, as well as the sequences $v_n(c_i,\omega_0)$ given by the
recurrence (\ref{equation sequences directions simplified}) with
$v_0(c_i,\omega_0)=\partial_{\eps} c_i (\alpha^*,0)$. Then, to show that 
\[\frac{\alpha_n'(\omega_0,c_1)}{ \alpha_{n-1}'(\omega_0,c_1)}
\sim
\frac{\alpha_n'(\omega_0,c_2)}{ \alpha_{n-1}'(\omega_0,c_2)},\]
it is enough prove that
\[\frac{ v_{k}(\omega_0,c_1)}{\|v_{k}(\omega_0,c_1)\|} 
\sim \frac{ v_{k}(\omega_0,c_2)}{\|v_{k}(\omega_0,c_2)\|}. \]

\subsubsection{Proofs} 
To prove theorem \ref{reduction alphas to dynamics of renormalization}
it is necessary to introduce the 
following technical lemmas on the equivalence relation $\sim$.

\begin{lem}
\label{lemma equivalencia multiplicacio}
Given four different sequences $\{r_i^{(1)}\}$, $\{r_i^{(2)}\}$, 
$\{s_i^{(2)}\}$ and $\{s_i^{(2)}\}$ all of them in $\ell^\infty(\R)$, assume that 
\begin{equation}
\label{hypothesi lemma equivalencia multiplicacio}
r_i^{(1)} \sim r_i^{(2)} \text{ and } s_i^{(1)} \sim s_i^{(2)}. 
\end{equation}
Then we have that 
\[
r_i^{(1)}s_i^{(1)}  \sim r_i^{(2)} s_i^{(2)}.
\]
\end{lem}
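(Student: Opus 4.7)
The plan is to use the standard ``add and subtract'' decomposition together with the fact that all four sequences lie in $\ell^\infty(\R)$, so their sup norms are finite. Writing the difference of products as
\[
r_i^{(1)}s_i^{(1)} - r_i^{(2)}s_i^{(2)} = r_i^{(1)}\bigl(s_i^{(1)} - s_i^{(2)}\bigr) + \bigl(r_i^{(1)} - r_i^{(2)}\bigr) s_i^{(2)},
\]
the triangle inequality gives
\[
\bigl| r_i^{(1)}s_i^{(1)} - r_i^{(2)}s_i^{(2)} \bigr| \leq \bigl\| r^{(1)} \bigr\|_\infty \bigl| s_i^{(1)} - s_i^{(2)} \bigr| + \bigl\| s^{(2)} \bigr\|_\infty \bigl| r_i^{(1)} - r_i^{(2)} \bigr|.
\]

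Next I would invoke the hypothesis $r_i^{(1)} \sim r_i^{(2)}$ and $s_i^{(1)} \sim s_i^{(2)}$ to obtain constants $k_r, k_s > 0$ and $\rho_r, \rho_s \in (0,1)$ such that $\bigl|r_i^{(1)} - r_i^{(2)}\bigr| \leq k_r \rho_r^i$ and $\bigl|s_i^{(1)} - s_i^{(2)}\bigr| \leq k_s \rho_s^i$ for all $i$. Setting $\rho := \max(\rho_r, \rho_s) \in (0,1)$ and $k_0 := \bigl\| r^{(1)} \bigr\|_\infty k_s + \bigl\| s^{(2)} \bigr\|_\infty k_r$, which is finite thanks to the $\ell^\infty$ assumption, I conclude
\[
\bigl| r_i^{(1)}s_i^{(1)} - r_i^{(2)}s_i^{(2)} \bigr| \leq k_0 \rho^i \qquad \forall i \in \Z_+,
\]
which is exactly the definition of $r_i^{(1)}s_i^{(1)} \sim r_i^{(2)} s_i^{(2)}$ from Definition \ref{definition equivalence of banach sequences}.

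There is no real obstacle: the only place where a subtlety could arise is in ensuring the constants $\bigl\|r^{(1)}\bigr\|_\infty$ and $\bigl\|s^{(2)}\bigr\|_\infty$ are finite, but this is guaranteed by the assumption that all four sequences live in $\ell^\infty(\R)$. The choice of which two sequences to use as the ``outer factors'' in the telescoping is arbitrary; one could equally well decompose using $r_i^{(2)}$ and $s_i^{(1)}$, and the proof would go through identically with $k_0 := \bigl\|r^{(2)}\bigr\|_\infty k_s + \bigl\|s^{(1)}\bigr\|_\infty k_r$.
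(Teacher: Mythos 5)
Your proof is correct and follows essentially the same approach as the paper: the same add-and-subtract decomposition of the difference of products, the same triangle-inequality bound, and the same appeal to the $\ell^\infty$ hypothesis to control the outer factors. You simply spell out the constants $\rho$ and $k_0$ a bit more explicitly than the paper does.
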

\begin{proof}
We have that 
\begin{eqnarray}
|r_i^{(1)}s_i^{(1)} - r_i^{(2)} s_i^{(2)}|  & = & 
|r_i^{(1)}(s_i^{(1)} - s_i^{(2)})  +   s_i^{(2)}(r_i^{(1)} - r_i^{(2)})| \nonumber
\\ 
& \leq & |r_i^{(1)}| |s_i^{(1)} - s_i^{(2)}|  +   |s_i^{(2)}| |r_i^{(1)} - r_i^{(2)}|. 
\nonumber
\end{eqnarray}
From $\{r_i^{(1)}\} \in \ell^\infty(\R)$ and $\{s_i^{(2)}\} \in \ell^\infty(\R)$ it follows 
that there exist a constant $K_0$ such that $|r_i^{(1)}| < K_0$ and  
$|s_i^{(2)}| < K_0$. Using 
(\ref{hypothesi lemma equivalencia multiplicacio}) and the bound above 
the lemma follows easily. 
\end{proof}

\begin{lem}
\label{lemma equivalencia divisio} 
Let $\{r_i\}$ and $\{s_i\}$ be two different sequences of real numbers 
with $r_i\sim s_i$ and $s_i> C_0$ for any $i\geq n_0$. Then we have 
\[
\frac{r_i}{s_i} \sim 1.
\]
\end{lem}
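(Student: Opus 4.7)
The argument is essentially a one-line calculation once the definition of $\sim$ is unpacked. By hypothesis there exist $0<\rho<1$ and $k_0>0$ with $|r_i - s_i|\leq k_0\rho^i$ for every $i\in\Z_+$, and there exist $n_0$ and $C_0>0$ with $s_i > C_0$ for $i\geq n_0$.

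The plan is to write
\[
\left|\frac{r_i}{s_i} - 1\right| \;=\; \frac{|r_i - s_i|}{|s_i|}
\]
and then, for $i\geq n_0$, bound the numerator by $k_0\rho^i$ using $r_i\sim s_i$ and the denominator from below by $C_0$ using the positivity assumption. This gives
\[
\left|\frac{r_i}{s_i} - 1\right| \;\leq\; \frac{k_0}{C_0}\,\rho^i \qquad \text{for } i\geq n_0.
\]

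For the finitely many indices $i<n_0$, I would simply enlarge the constant. Since there are only finitely many such $i$, the quantity $\left|\frac{r_i}{s_i}-1\right|/\rho^i$ is bounded (provided, of course, one also knows that $s_i\neq 0$ on this initial segment; if necessary this can be absorbed by redefining the finitely many offending terms, as the equivalence $\sim$ in Definition \ref{definition equivalence of banach sequences} is clearly insensitive to finitely many initial entries). Taking $k_0' = \max\{k_0/C_0,\; \max_{i<n_0} |r_i/s_i - 1|\rho^{-i}\}$ one obtains $\left|\frac{r_i}{s_i}-1\right|\leq k_0'\rho^i$ for all $i\in\Z_+$, which is exactly the statement $r_i/s_i \sim 1$ with the same geometric rate $\rho$.

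There is no real obstacle here; the only subtlety is that the hypothesis gives $s_i>C_0$ only for $i\geq n_0$, so one must handle the initial segment separately, which is done by the standard trick of absorbing finitely many terms into the multiplicative constant.
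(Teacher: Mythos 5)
Your proof is correct and follows essentially the same route as the paper: write $\left|\frac{r_i}{s_i}-1\right|=\frac{|r_i-s_i|}{|s_i|}$, bound the numerator geometrically by $r_i\sim s_i$ and the denominator below by $C_0$. The only difference is that you spell out the handling of the finitely many indices $i<n_0$, which the paper leaves implicit.
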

\begin{proof}
If follows easily from  $r_i\sim s_i$ and the following bound
\begin{equation}
\left|\frac{r_i}{s_i} - 1\right| =  \left|\frac{1}{s_i}\right| |r_i - s_i|  
\leq   \frac{1}{C_0} |r_i - s_i|.  \nonumber
\end{equation}
\end{proof}

\begin{lem}
\label{lemma equivalencia funcio diferenciable}
Let $B$ be a Banach space and $N$ a normed space. Consider that we 
have $\{f_n\}_{n\geq0}$ a sequence on $B$ such that $f_n \sim f$, with 
$f\in B$. Also consider  $\{u_n\}_{n\geq0}$ a sequence of vectors 
on $N$ and a function $G:B \times N \rightarrow \R$. Assume that 
$G$ is differentiable w.r.t the first variable in a neighborhood $V$ of 
$f$ and 
\begin{equation}
\label{uniform bound of the partial}
\left\|\frac{\partial}{\partial x_1} G(g,u_n) v\right\| \leq  C \| v\|, 
\end{equation}
for any $n\geq n_0$, $g \in V$ and $v\in B$.  

Then we have that 
\[
G(f_n,u_n) \sim G(f,u_n).
\]
\end{lem}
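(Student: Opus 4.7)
The plan is to apply the mean value theorem (fundamental theorem of calculus in Banach spaces) along the segment joining $f$ and $f_n$, and use the uniform bound on $\partial_1 G$ to transfer the geometric decay of $\|f_n-f\|$ into geometric decay of $|G(f_n,u_n)-G(f,u_n)|$.

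First, from $f_n\sim f$ I would extract explicit constants $k_1>0$ and $\rho\in(0,1)$ with $\|f_n-f\|\le k_1\rho^n$ for all $n$. Since $V$ is an open neighborhood of $f$, there is $r>0$ with the ball $B_r(f)\subset V$, and choosing $n_1$ with $k_1\rho^{n_1}<r$ guarantees that for every $n\ge n_1$ the whole segment $\{(1-t)f+tf_n:t\in[0,1]\}$ lies in $V$ (by convexity of the ball and because both endpoints are within distance $r$ of $f$).

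Next, for $n\ge \max(n_0,n_1)$, since $G$ is differentiable in its first argument on $V$, the fundamental theorem of calculus gives
\[
G(f_n,u_n)-G(f,u_n)=\int_0^1 \frac{\partial}{\partial x_1}G\bigl((1-t)f+tf_n,\,u_n\bigr)(f_n-f)\,dt.
\]
Applying the uniform bound (\ref{uniform bound of the partial}) inside the integrand with $v=f_n-f$ yields
\[
\bigl|G(f_n,u_n)-G(f,u_n)\bigr|\le C\|f_n-f\|\le Ck_1\rho^n,
\]
which is exactly the estimate required for $G(f_n,u_n)\sim G(f,u_n)$ beyond the index $\max(n_0,n_1)$.

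Finally, for the finitely many initial indices $n<\max(n_0,n_1)$, the values $|G(f_n,u_n)-G(f,u_n)|$ form a finite set and are therefore bounded by some constant $M$; enlarging the constant to $k_0:=\max\{Ck_1,\,M\rho^{-\max(n_0,n_1)}\}$ makes the inequality $|G(f_n,u_n)-G(f,u_n)|\le k_0\rho^n$ valid for every $n\ge 0$. The only delicate point is the first step, namely ensuring that the segment stays in $V$ so that differentiability can legitimately be invoked; once that is arranged, the rest is a direct mean value estimate.
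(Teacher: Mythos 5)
Your argument is essentially the same as the paper's: both estimate $|G(f_n,u_n)-G(f,u_n)|$ by a mean value argument along the segment from $f$ to $f_n$ and invoke the uniform bound (\ref{uniform bound of the partial}); the paper applies the classical one-variable mean value theorem to $H_n(t)=G(f+t\Delta_n,u_n)$ to produce an intermediate point $r_n$, while you use the integral (fundamental-theorem) form, which is an inessential variant. Your extra care in explicitly putting the whole segment inside $V$ via a ball $B_r(f)\subset V$ and in absorbing the finitely many initial indices into the constant is tidier than the paper's slightly loose handling, but it is the same proof.
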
 
\begin{proof}
From $f_n \sim f$ we have that $f_n$ tends to $f$ with a geometric rate. In other
words, 
we have that 
\[f_n = f + \Delta_n, \text{ with } \Delta_n \in B \text{ and } 
\|\Delta_n \| < k_0 \rho^n,\] 
with $k_0>0$ independent of $n$ and $\rho <1$. 

In particular we have that $f_n$ belongs to a neighborhood 
$V$ of $f$ for any $n\geq n_0$. We can consider the auxiliary functions
$H_n:[0,1] \rightarrow \R$ given as $H_n(t)= G(f+ t \Delta_n, u_n)$. If we 
apply the mean value theorem to $H_n$ we have that there exist 
a real value $r_n\in(0,1)$ such that 
\[
G(f_n, u_n) = G(f, u_n) +  \frac{\partial}{\partial x_1} G(f + r_n \Delta_n,u_n) \Delta_n. 
\]
Remark that for any $n\geq n_0$ we have that $f + r_n \Delta_n$ belongs to the 
neighborhood $V$ of $f$.
Therefore we can apply the bound (\ref{uniform bound of the partial}) given by hypothesis, 
then we have that 
\begin{eqnarray}
| G(f_n,u_n) - G(f,u_n)| &  =  & |G(f + \Delta_n, u_n) - G(f,u_n) |  \nonumber \\
& =  & | \frac{\partial}{\partial x_1} G(f + r_n \Delta_n,u_n) \Delta_n | 
 \leq C \|\Delta_n \| \leq   K_0 \rho^n, \nonumber 
\end{eqnarray}
for any $n\geq n_0$. 
\end{proof}

\begin{proof}[Proof of theorem \ref{reduction alphas to dynamics of renormalization}]
To simplify the expression
of $\alpha'_n(\omega_0,c)$ in terms of the (q.p. forced) renormalization 
operator let us consider the following functions,
\begin{equation}
\label{equation definition L}
\begin{array}{rccc}
L:&\DD(\RR)\times \BB_0 &\rightarrow & \R\\
  & (f, u) & \mapsto    & D \widehat{G}_1 \left(f\right) u,
\end{array} 
\end{equation}
and
\begin{equation}
\label{equation definition K}
\begin{array}{rccc}
K:& \Omega \times \DD(\TT) \times \BB &\rightarrow & \R\\
  &(\omega, f, v) & \mapsto    &\displaystyle m\left(
D G_1 \left(\omega, f \right) v \right),
\end{array} 
\end{equation}
where $m$, $G_1$ and $\widehat{G}_1$ are the functions given by 
(\ref{equation definition of minim}), 
(\ref{equation definition G1}) and
(\ref{equation definition widehatG1}). 

Note that the map $L$ is linear on the component $u$
and non-linear but smooth with respect to the component $f$.
On the other hand $K$ is not linear on the vectorial component
$v$, but for any constant $k>0$ we have that $k K(\omega,f,v) = K(\omega,f,kv)$.
If $D G_1 \left(\omega, f , v\right)$ has a unique 
minimum as a function from $\T$ to $\R$
then we have that $K$ is differentiable in a neighborhood $V\subset  \DD(\TT) \times \BB$ of
$(f,v)$ (see appendix A in \cite{JRT11a}). 


Note that theorem \ref{thm symmetry reduction} is applicable to 
the family $\{c(\alpha,\eps)\}_{(\alpha,\eps)\in A}$. We can 
replace the value of $\alpha_n'(\omega_0)$ given 
by (\ref{equation alpha n + section}) and, rearranging the terms, 
we obtain 
\[
\frac{\alpha_n'(\omega_0)}{\alpha_{n-1}'(\omega_0)} = 
\frac{K\left(\omega_{n-1},f^{(n)}_{n-1}, \tv_{n-1}^{(n)} \right)  } 
{\rule{0ex}{3.5ex} K\left(\omega_{n-2},f^{(n-1)}_{n-2}, \tv_{n-2}^{(n-1)} \right)  } 
\cdot
\frac{  L\left(f^{(n-1)}_{n-2},  u_{n-2}^{(n-1)}\right)  } 
{\rule{0ex}{3.5ex}  L\left(f^{(n)}_{n-1},  u_{n-1}^{(n)}\right)  } .
\]

Consider \[
A_n(c)= \frac{  L\left(f^{(n-1)}_{n-2} ,  u_{n-2}^{(n-1)} \right)  } 
{\rule{0ex}{3.5ex}  L\left(f^{(n)}_{n-1} ,   u_{n-1}^{(n)}\right)  }  
\text{ and } 
B_n(c)= \frac{K\left(\omega_{n-1},f^{(n)}_{n-1} , \tv_{n-1}^{(n)} \right)  } 
{\rule{0ex}{3.5ex} K\left(\omega_{n-2},f^{(n-1)}_{n-2}, \tv_{n-2}^{(n-1)} \right)  }. 
\]

Using lemma \ref{lemma equivalencia multiplicacio} it is enough to 
prove that $\{A_n(c)\}_{n\geq0}, \{B_n(c)\}_{n\geq0} \in \ell^{\infty}(\R)$ and 
\[
A_n(c) \sim \deltabf^{-1} , \quad B_n(c) \sim   \frac{
\displaystyle K\left( \omega_{n-1}, f^*_{1}, \frac{v_{n-1}}{\|v_{n-1}\|}\right) 
}{ \rule{0pt}{4ex} \displaystyle 
K\left( \omega_{n-2}, f^*_{1}, \frac{v_{n-2}}{\|v_{n-2}\|}\right)
}\cdot 
\left\| D\TT_{\omega_{n-2}}(\Phi) \frac{ v_{n-2}}{\|v_{n-2}\|}.
\right\|. 
\]

Recall that $f^{(k)}_{k-1}$ corresponds to 
$\RR^{n} \left(\{c(\alpha,0)\} \right) \cap \Sigma_1$.
Using that the family $\{c(\alpha,0)\}$ crosses transversaly the stable 
manifold of the fixed point $\Phi$ of $\RR$ and that the set $\Sigma_1$ 
crosses transversely the unstable one dimensional manifold 
of $\Phi$, we have that $f^{(k)}_{k-1}$ converges 
geometrically to $\{f^*_{1}\} = W^u(\RR, \Phi) \cap \Sigma_1$, where 
$W^u(\RR,\Phi)$ is the unstable manifold of $\RR$ at the 
fixed point $\Phi$. Concretely we have that $f^{(k)}_{k-1} \sim f^*_{1}$. 

Recall that $L(f,u)= D\hat{G}_1(f) u$, therefore we have that 
$D_f L(f,u) v= D^2 \hat{G}_1(f) (u,v)$. 
We can apply now lemma \ref{lemma equivalencia funcio diferenciable} to 
$L\left(f^{(n)}_{n-1}, \frac{u_{n-1}^{(n)}}{\|  u_{n-1}^{(n)} \|} \right)$, 
then we have  that 
\begin{equation}
\label{equation demo assymptotic alphas 1}
L\left(f^{(n)}_{n-1} , \frac{u_{n-1}^{(n)}}{\|  u_{n-1}^{(n)} \|} \right)  
\sim L\left(f^{*}_{1} ,  \frac{u_{n-1}^{(n)}}{\|  u_{n-1}^{(n)} \|} \right).
\end{equation}

On the other hand, we have that $u^{k}_0$ converges geometrically to 
$u_0 = \partial_\alpha c(\alpha^*,0)$ with $\alpha^*$ the 
parameter value for which the family $\{c(\alpha,9)\}_{\alpha,0} \in A$
intersects with $W^s(\Phi,\RR)$. Then, using the $\lambda$-lemma, we have 
that $\frac{u^{(k)}_{k-1}}{u^{(k)}_{k-1}}$ converges to $e^u(f^*_1)$, 
the unitary tangent vector to $W^u(\Phi,\RR)$ at the point $f^*_1$. 
With the use of the $\lambda$-lemma we also have that $\|u_{k-1}^{(k)}\|$ behaves asymptotically 
as $\deltabf^k$ when $k$ goes to infinity, with $\deltabf$ the unstable 
eigenvalue of $D\RR(\Phi)$. Concretely we have that $\|u_{n-1}\| > C_0$ for 
any $n\geq n_0$. 
Then we can multiply and divide 
$A_n(c)$ by $\| u_{n-2}^{(n-1)} \|$ and 
$\|  u_{n-1}^{(n)} \|$ and write 
\begin{equation}
\label{equation demo assymptotic alphas 0}
A_n(c)= \frac{  L\left(f^{(n-1)}_{n-2} ,
\frac{ u_{n-2}^{(n-1)}}{\|   u_{n-2}^{(n-1)} \|} \right) 
} 
{\rule{0ex}{3.5ex}  L\left(f^{(n)}_{n-1},
\frac{u_{n-1}^{(n)}}{\|  u_{n-1}^{(n)} \|} \right) 
} \cdot
\frac{ \|   u_{n-2}^{(n-1)} \|}{\|  u_{n-1}^{(n)} \|} .
\end{equation}

Also recall that $L$ is linear in the second component and 
 $\frac{u_{k-1}^{(k)}} {\|u_{k-1}^{(k)\|}}\sim e^u(f^*_1)$. Then it follows that 
\begin{equation}
\label{equation demo assymptotic alphas 2}
 L\left(f^{*}_{1} ,  \frac{u_{n-1}^{(n)}}{\|  u_{n-1}^{(n)} \|} \right)
\sim  L\left(f^{*}_{1} , e^u(f^*_1) \right).
\end{equation}

Note that the term $L\left(f^{*}_{1} , e^u(f^*_1) \right)$ is constant, and it is 
different from zero since $\Sigma_1$ crosses transversely $W^u(\Phi,\RR)$. 
Then we have that 
\[\frac{  L\left(f^{(n-1)}_{n-2} ,
\frac{ u_{n-2}^{(n-1)}}{\|   u_{n-2}^{(n-1)} \|} \right) 
} {\rule{0ex}{3.5ex}  L\left(f^{(n)}_{n-1},
\frac{u_{n-1}^{(n)}}{\|  u_{n-1}^{(n)} \|} \right) 
} \sim 1.\]
We also have that $\|u_{k-1}^{(k)}\|\sim \deltabf^k$, therefore 
$\frac{ \|   u_{n-2}^{(n-1)} \|}{\|  u_{n-1}^{(n)} \|} \sim \deltabf^{-1}$. 
Applying this to (\ref{equation demo assymptotic alphas 0}) we have 
$A_n(c) \sim \deltabf^{-1}$. Additionally, this implies that
$A_n(c)\in \ell^{\infty}(\R)$.

Now we  focus on the asymptotics of $B_n(c)$. We follow  
the same arguments used for the study of $A_n(c)$. 
Using that $C K(\omega,f,v) = 
K(\omega,f,C v)$ for any constant $C>0$ and the fact that 
(due to conjecture {\bf \ref{conjecture H3}}) there exists $C>0$ 
such that $\| v^{(n)}_{n-1} \|> C$ 
for any $n$, we can  rearrange the terms on the expression of $B_n(c)$ in 
such a way that we have
\begin{equation}
\label{equation demo assymptotic alphas 5}
B_n(c)= \frac
{ K\left(\omega_{n-1},f^{(n)}_{n-1},
\frac{\tv_{n-1}^{(n)}}{\|  \tv_{n-1}^{(n)} \|} \right) 
} {\rule{0ex}{3.5ex}  K\left(\omega_{n-2}, f^{(n-1)}_{n-2} , 
\frac{ \tv_{n-2}^{(n-1)}}{\|  \tv_{n-2}^{(n-1)} \|} \right) } 
\cdot
\frac {\|  \tv_{n-1}^{(n)} \|} { \|   \tv_{n-2}^{(n-1)} \|} .
\end{equation}

Consider the term $\frac{ \|   \tv_{n-2}^{(n-1)} \|}{\|  \tv_{n-1}^{(n)} \|}$, note 
that we can use the same argument as we used for 
$\frac{ \|   u_{n-2}^{(n-1)} \|}{\|  u_{n-1}^{(n)} \|}$ to conclude that 
\[
\frac  {\|  \tv_{n-1}^{(n)} \|}{ \|   \tv_{n-2}^{(n-1)} \|} \sim  
\frac  {\|  v_{n-1} \|}{ \|   v_{n-2}  \|} = 
\left\|t_{\gamma(v_{n-2})} \left( D\TT_{\omega_{n-2}}(f_2^*) 
\frac{ v_{n-2}}{\|v_{n-2}\|}\right) \right\| = \left\| D\TT_{\omega_{n-2}}(f_2^*) 
\frac{ v_{n-2}}{\|v_{n-2}\|}\right\|. 
\]

On the other hand, using  the hypothesis {\bf H3} and applying
lemma \ref{lemma equivalencia funcio diferenciable}  
to  $ K\left(\omega_{n-1},f^{(n)}_{n-1},
\frac{\tv_{n-1}^{(n)}}{\|  \tv_{n-1}^{(n)} \|} \right)$, we have 
\[
K\left(\omega_{n-1},f^{(n)}_{n-1},
\frac{\tv_{n-1}^{(n)}}{\|  \tv_{n-1}^{(n)} \|} \right) 
\sim
K\left(\omega_{n-1},f^*_1,
\frac{\tv_{n-1}^{(n)}}{\|  \tv_{n-1}^{(n)} \|} \right).
\]
Using the hypothesis {\bf H3} again we have that 
$K\left(\omega_{n-1}, f^*_1, \frac{\tv_{n-1}^{(n)}}{\|  \tv_{n-1}^{(n)} \|} \right)$ 
is differentiable with respect to the third component. Then using the mean 
value theorem and conjecture {\bf \ref{conjecture H3}} it can be shown
(by means of an analog argument to the one used in the proof of 
lemma \ref{lemma equivalencia funcio diferenciable})  that 
\[
K\left(\omega_{n-1},f^*_1,
\frac{\tv_{n-1}^{(n)}}{\|  \tv_{n-1}^{(n)} \|} \right) 
\sim 
K\left(\omega_{n-1},f^*_1,
\frac{v_{n-1}}{\|  v_{n-1} \|} \right).
\]

It is only left to check that $B_n(c) \in \ell^{\infty}(\R)$. Using the last 
part of conjecture {\bf \ref{conjecture H3}} we have that the sequence given as  
$\left\{1/K\left(\omega_{n-2},  f^*_1,  \frac{v_{n-2}}{\|v_{n-2}\|} 
\right)\right\}_{n\geq0}$ is bounded.  On 
the other hand, using the definition of the operator $K$ given by 
(\ref{equation definition K}) and the proposition 
3.21 in \cite{JRT11a} it follows that  $\left\{K\left(\omega_{n-1},  
f^*_1,  \frac{v_{n-1}}{\|v_{n-1}\|} 
\right)\right\}_{n\geq0}$ is also a bounded sequence. 
Note that 
$D\TT_{\omega}(\Phi)$ is a bounded operator for any $\omega\in\T$,
therefore we 
have that $\left\{\left\| D\TT_{\omega_{n-2}}(f_2^*) \frac{ v_{n-2}}{\|v_{n-2}\|}
\right\|\right\}_{n\geq 0} \in \ell^{\infty}(\R)$. Using lemma 
\ref{lemma equivalencia multiplicacio}
it follows that $B_n(c) \in \ell^{\infty}(\R)$, which finishes the proof. 
\end{proof}

\subsection{Theoretical explanation to the first numerical observation} 
\label{subsection explanation 1st observation}
Consider a two parametric family of maps $\{c(\alpha,\eps)\}_{(\alpha,\eps)
\in A}$ contained in $\BB$ satisfying the hypotheses {\bf H1}, {\bf H2}
and {\bf H3}. Consider also $\omega_0$ a  Diophantine rotation 
number for the family. As in the previous section 
we are concerned with the asymptotic behavior 
of the reducibility loss directions $\alpha'_n(\omega_0,c)$.

Due to theorem \ref{reduction alphas 
to dynamics of renormalization}
 we have that the values 
$\frac{\alpha'_n(\omega_0,c)}{\alpha'_{n-1}(\omega_0,c)}$ 
depend only on the sequences $\omega_n$ and $v_n$ given by 
equation (\ref{equation sequences directions simplified}), 
with $v_0=t_{\gamma_0}\left(\partial_\eps c(\alpha^*,0) \right)$, 
$\gamma_0$ such that $v_0 \in \BB'$ and
$\alpha^*$ the parameter value for which the family intersects 
$W^s(\RR,\Phi)$. The behavior of vectors $v_n$ is described by 
the dynamics of the following operator,
\begin{equation}
\label{equation map linearized q.p. renormalization general} 
\begin{array}{rccc}
L:& \T \times \BB' &\rightarrow &  \T \times \BB'  \\
\rule{0pt}{5ex} & (\omega, v) & \mapsto & \left(2\omega,  
\displaystyle  \frac{t_{\gamma(v)} \left( D\TT_\omega(\Phi) v\right)}
{\|t_{\gamma(v)} \left(D\TT_\omega(\Phi) v\right) \|} \right),
\end{array}
\end{equation} 
where $\gamma$ is chosen such that $t_{\gamma(v)} 
\left( D\TT_\omega(\Phi) v\right)$ belongs to $\BB'$.

In this section we focus in the case where $\{c(\alpha,\eps)
\}_{(\alpha,\eps)\in A}$ satisfies also hypothesis {\bf H2'}. 
In such a case, we have that $v_0=\partial_\eps c(\alpha^*,0)$  
belongs to $\BB_1$  the linear subspace of $\BB$ given by
(\ref{equation spaces bbk}) for $k=1$.  Due to proposition 2.16 in \cite{JRT11a} 
the space $\BB_1$ is invariant by the iterates of $D\TT_\omega(\Psi)$. 

Consider $\LL_\omega$ is the map defined by equation (\ref{equation maps L_omega})  
(this is the restriction of  $D\TT_\omega(\Psi)$ to $\BB_1$). 
Let us define 
\begin{equation}
\label{equation map LL_omega'}
\begin{array}{rccc}
\LL_\omega': &   \BB_1'  & \rightarrow &  \BB_1'   \\
&\rule{0ex}{3ex} v  & \mapsto & \displaystyle t_{\gamma(v)} \circ \LL_\omega(v) , 
\end{array}
\end{equation}
where $\gamma(v)$ is chosen such that 
$t_{\gamma(v)} \circ \LL_\omega(v)
\in \BB_1'$. Note that, due to proposition 
\ref{proposion projections of BB1p} above, the value $\gamma$ is 
unique.
Actually, we can use this map to induce the following one on 
$\T \times \BB_1' $,
\begin{equation}
\label{equation map L_omega'}
\begin{array}{rccc}
L_1: &  \T\times \BB_1' & \rightarrow & \T\times \BB_1'  \\ 
     & \rule{0ex}{4.5ex}(\omega, v)  & \mapsto & \displaystyle 
\left( 2\omega, \frac{\LL_\omega'(v) } 
{\| \LL_\omega'(v)\| }\right).
\end{array}
\end{equation}
This is the restriction of $L_1$ to $\T\times \BB_1'$.

In \cite{JRT11c} we present numerical evidences which suggest that 
the following conjecture is true.  

\begin{conj}
\label{conjecture H4}
There exists an open set $V\subset 
\BB_1'$ (independent of $\omega$)  such that 
the second component of the map $L_1'$ given by 
(\ref{equation map L_omega'}) is contractive (with the supremum norm) 
in the unit sphere and it maps the set $V$ into itself 
for any $\omega \in \T$. Additionally we will assume that the contraction 
is uniform for any $\omega \in \T$, in the sense that there exists 
a constant $0<\rho<1$ such that the Lipschitz constant 
associated to the second component 
of the map $L_1'$ is upper bounded by $\rho$ for any $\omega\in \T$. 
\end{conj}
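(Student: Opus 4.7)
The plan is to reduce the conjecture to a spectral gap property of $\LL_\omega$ on $\BB_1$, and then use the standard fact that the projectivization of a linear map with a dominant eigenspace contracts transversally to that eigenspace. I would proceed as follows.

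First, I would study the spectrum of $\LL_\omega$ viewed as a compact operator on $\RHH(\W)\oplus\RHH(\W)$. From the earlier discussion, $\LL_\omega$ commutes with the rotation $R_\gamma$, its nonzero eigenvalues come in complex-conjugate pairs (or real with even geometric multiplicity), and they depend analytically on $\omega$ as long as they stay simple. I would try to identify a dominant complex-conjugate pair $\mu(\omega),\overline{\mu(\omega)}$ and a spectral gap: $|\mu(\omega)|>|\lambda|$ for every other eigenvalue $\lambda$. Numerical evidence in \cite{JRT11c} presumably pins down $|\mu(\omega)|$, so the real task is to show the gap is strict and, by analyticity plus compactness of $\T$, uniform in $\omega$. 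I would use the block structure of $\LL_\omega$ (which involves the fixed 1D operators $L_1$ and $L_2$ coming from the Feigenbaum fixed point $\phi$) together with the rotational symmetry to reduce the eigenvalue problem to a $2\times 2$ problem with operator-valued coefficients and estimate the spectral radius of the subleading block.

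Second, given the gap, I would interpret the map $\LL_\omega'$ of (\ref{equation map LL_omega'}) as a projectivization on the unit sphere modulo the circle action $\gamma\mapsto t_\gamma$. The symmetry reduction via proposition \ref{proposion projections of BB1p} identifies $\BB_1'$ with a slice for the $t_\gamma$-action, so that the induced map on (unit sphere of) $\BB_1'$ is well defined precisely when $\LL_\omega(v)$ has nonzero $\BB_1$-projection, which is automatic near the dominant eigenspace. On such a slice, the projectivization of a linear map with simple dominant eigenvalue is classically a local contraction whose Lipschitz constant is controlled by the ratio $|\lambda_2(\omega)|/|\mu(\omega)|<1$; this can be made quantitative by writing $v=v^\star(\omega)+w$ in a spectral decomposition and computing the Taylor expansion of $v\mapsto \LL_\omega v/\|\LL_\omega v\|$ to first order. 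The uniform bound $\rho<1$ is then obtained by maximizing the ratio of moduli over the compact torus $\T$.

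Third, to obtain the invariant open set $V$, I would take a small uniform tubular neighborhood of the union $\bigcup_{\omega\in\T}\{v^\star(\omega)\}\cap S$ inside the unit sphere of $\BB_1'$, where $v^\star(\omega)$ is the $t_\gamma$-normalized dominant eigenvector. Analytic dependence of eigenspaces on $\omega$ plus compactness of $\T$ ensures this neighborhood is uniform. One then checks that the radius can be chosen so that the first-order contraction estimate dominates the higher-order terms globally on $V$, proving $\LL_\omega'(V)\subset V$ and the Lipschitz bound $\rho$ simultaneously for all $\omega\in\T$.

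The hard part will be the uniform spectral gap: analyticity of $\LL_\omega$ in $\omega$ only gives continuity of simple eigenvalues, and if eigenvalue crossings occur for some $\omega\in\T$, the gap collapses and the contractivity is lost. Establishing that no such crossing occurs requires either a rigorous bound comparing $|\mu(\omega)|$ to the essential spectral radius of $\LL_\omega$ for every $\omega$, or a computer-assisted proof along the lines of Lanford's work on the 1D fixed point. This is precisely the obstruction that leads the authors to state the result as a conjecture rather than a theorem, and it is why the numerical verification in \cite{JRT11c} is essential for justifying the hypothesis.
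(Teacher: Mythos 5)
This statement is stated in the paper as Conjecture~\ref{conjecture H4}, not as a theorem: the authors explicitly do not prove it, and instead refer to numerical evidence in \cite{JRT11c}. So there is no ``paper's own proof'' to compare your proposal against. What you have written is a strategy sketch, not a proof, and you yourself say so in your last paragraph. The central missing ingredient is exactly the one you name: a rigorous, $\omega$-uniform spectral gap for the compact operator $\LL_\omega$ acting on $\RHH(\W)\oplus\RHH(\W)$, with a dominant complex-conjugate pair of eigenvalues and all other spectrum strictly smaller in modulus, uniformly over $\T$. Analyticity of $\LL_\omega$ in $\omega$ only gives you continuity and analyticity of \emph{simple} eigenvalues away from crossings; it does not rule out crossings, and a crossing at some $\omega_\star$ would collapse the gap and destroy the contraction. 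Without that estimate (which, as you note, would likely require a computer-assisted argument in the spirit of Lanford's work on the one-dimensional fixed point), the projectivization argument in your second and third steps never gets off the ground.

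Two smaller remarks. First, once a uniform gap is granted, your argument that the projectivized map has Lipschitz constant controlled by the subdominant-to-dominant ratio is sound, but it yields only a \emph{local} contraction near the slice of the dominant eigenspace; that is enough to match what the conjecture actually requires (contraction on an invariant $V$ mapped into itself), so your reading of ``contractive in the unit sphere'' as contractive on $V\subset S$ is the correct one and is consistent with how the conjecture is used later in the proof of theorem~\ref{theorem proof of the universality conjecture}. Second, the map $L_1'$ is a skew product over the doubling map $\omega\mapsto 2\omega$, so the invariant set $V$ must work simultaneously as a target for $\LL_{\omega}'$ for every $\omega$ (or one uses the relaxation in remark~\ref{remark on the assumption H4)} allowing $V=V(\omega)$); your construction via a tubular neighborhood of $\bigcup_\omega v^\star(\omega)$ inside the slice $\BB_1'$ is the right way to set this up, again contingent on the uniform gap. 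In short: the approach is reasonable and correctly locates the obstruction, but it is not a proof, and the paper does not claim one.
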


Consider 
\begin{equation}
\label{Set Rot(v)} 
\operatorname{Rot}(V) = \left\{ v \in \BB_1 \thinspace
 | \thinspace t_\gamma(v) \in V \subset \BB_1' \text{ for some } \gamma \in \T\right\} .
\end{equation}

The following result gives a theoretical explanation to 
the first numerical observation described in the 
introduction (section \ref{sec:num obs}). 

\begin{thm}
\label{theorem proof of the universality conjecture} 
Consider $\{c_1(\alpha,\eps)\}$ and $\{c_2(\beta,\eps)\}$ 
two different families of two parametric 
maps satisfying the hypotheses {\bf H1}, {\bf H2'} and {\bf H3}. 
Assume that conjectures {\bf \ref{conjecture H2}}, {\bf \ref{conjecture H3}} and 
{\bf \ref{conjecture H4}} are true. Let $\alpha^*$ and $\beta^*$ be the parameter 
values where each family $c_1(\alpha,0)$ and $c_2(\beta,0)$ 
intersects $W^s(\RR,\Phi)$, the 
stable manifold of the fixed point of the renormalization operator.  
Assume that $\partial_\eps c_1(\alpha^*,0)$ and 
$ \partial_\eps c_2(\beta^*,0)$ belong to $\operatorname{Rot}(V)$. 

Then, for any $\omega_0\in \Omega$,  we have that 
\begin{equation}
\label{equation thm proof universality conjecture} 
\frac{\alpha_n'(\omega_0,c_1)}{ \alpha_{n-1}'(\omega_0,c_1)}
\sim
\frac{\alpha_n'(\omega_0,c_2)}{ \alpha_{n-1}'(\omega_0,c_2)},
\end{equation}
where $\alpha_i'(\omega_0, c_i)$  are the reducibility loss 
directions associated to each family $c_i$ for
the rotation number of the system equal to $\omega_0$. 
\end{thm}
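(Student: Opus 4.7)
The plan is to combine Theorem~\ref{reduction alphas to dynamics of renormalization} with the uniform contractivity asserted in Conjecture~\ref{conjecture H4} so that the two families generate asymptotically indistinguishable sequences of normalised tangent vectors, and then transport this equivalence through the formula for $\alpha_n'/\alpha_{n-1}'$.

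First I would apply Theorem~\ref{reduction alphas to dynamics of renormalization} to each family $c_i$, obtaining
\[
\frac{\alpha_n'(\omega_0,c_i)}{\alpha_{n-1}'(\omega_0,c_i)} \sim \deltabf^{-1}\cdot \frac{K\!\left(\omega_{n-1},f^*_1,\tfrac{v_{n-1}(c_i)}{\|v_{n-1}(c_i)\|}\right)}{K\!\left(\omega_{n-2},f^*_1,\tfrac{v_{n-2}(c_i)}{\|v_{n-2}(c_i)\|}\right)}\cdot \left\|D\TT_{\omega_{n-2}}(f_2^*)\tfrac{v_{n-2}(c_i)}{\|v_{n-2}(c_i)\|}\right\|,
\]
where $K$ is as in the proof of that theorem. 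Since the rotation numbers $\omega_k=2^k\omega_0$ depend only on $\omega_0$ and not on the family, the two sides of (\ref{equation thm proof universality conjecture}) share identical $\omega$-inputs. Lemmas~\ref{lemma equivalencia multiplicacio}, \ref{lemma equivalencia divisio} and \ref{lemma equivalencia funcio diferenciable}, together with the lower bound on $K$ provided by the last clause of Conjecture~\ref{conjecture H3} and the boundedness of $D\TT_{\omega_{n-2}}(f_2^*)$, then reduce the claim to proving the single asymptotic statement
\[
\frac{v_{n}(\omega_0,c_1)}{\|v_{n}(\omega_0,c_1)\|}\;\sim\;\frac{v_{n}(\omega_0,c_2)}{\|v_{n}(\omega_0,c_2)\|}\qquad\text{in }\BB_1'.
\]

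Next I would locate the orbits in the right space. Hypothesis {\bf H2'} gives $\partial_\eps c_i(\alpha^*,0)\in \operatorname{Rot}(V)\subset\BB_1$, and the Fourier diagonalisation of Section~\ref{section The Fourier expansion of DT} shows that $\BB_1$ is invariant under both $D\TT_\omega(\Phi)$ and $D\TT_\omega(f^*_{n-k})$. Consequently every term of the recurrence (\ref{equation sequences directions simplified}) lives in $\BB_1$, and after applying $t_{\gamma(\cdot)}$ as in Section~\ref{subsection  rotational symmetry reduction} the normalised sequence lies in $\BB_1'$ and coincides with the second-component orbit of the map $L_1$ of (\ref{equation map L_omega'}). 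Choosing $\gamma_0$ suitably, the two initial data enter the open set $V$ of Conjecture~\ref{conjecture H4} after the first rotation reduction.

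The heart of the argument is then a contraction estimate. By Conjecture~\ref{conjecture H4} the fibre map $v\mapsto \LL_\omega'(v)/\|\LL_\omega'(v)\|$ has Lipschitz constant $\le\rho<1$ on $V$, uniformly in $\omega\in\T$, and preserves $V$. Composing the same sequence of fibre maps (indexed by the common $\omega_k$) for both families yields
\[
\left\|\frac{v_n(c_1)}{\|v_n(c_1)\|}-\frac{v_n(c_2)}{\|v_n(c_2)\|}\right\|\;\le\;K_0\,\rho^{\,n},
\]
for all $n$ in the first "near-$\Phi$" phase $k\le[n/2]-1$ of (\ref{equation sequences directions simplified}). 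This is precisely the $\sim$-equivalence needed to conclude via the first paragraph.

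The main obstacle, and the place where the plan needs extra care, is the second phase $k\ge[n/2]$ of (\ref{equation sequences directions simplified}), in which $D\TT_{\omega_{k-1}}$ is evaluated at the points $f^*_{n-k}$ of $W^u(\RR,\Phi)$ rather than at $\Phi$; Conjecture~\ref{conjecture H4} gives contractivity only at the fixed point. Because $f^*_{n-k}\to\Phi$ geometrically as $n-k$ grows and $\TT_\omega$ is Fréchet differentiable on a neighbourhood of $W^u(\RR,\Phi)\cup W^s(\RR,\Phi)$ by Conjecture~\ref{conjecture H2}, the fibre maps along the second phase are bounded Lipschitz perturbations of those along the first. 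I would therefore argue that the first phase already brings the two normalised orbits to within $\rho^{[n/2]}$ of each other, while the second phase can only dilate this distance by a multiplicative factor independent of $n$, preserving a geometric decay at rate $\rho^{1/2}$. This is the technically delicate step, as one must verify that the Lipschitz constants of the perturbed fibre maps admit a bound uniform in $n$; once this is in place, Lemma~\ref{lemma equivalencia multiplicacio} finishes the proof.
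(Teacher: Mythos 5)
Your overall strategy matches the paper's: reduce via Theorem~\ref{reduction alphas to dynamics of renormalization} (the paper packages this as Lemma~\ref{lemma reduction alphas to dynamics of renormalization}) to showing $\frac{v_n(\omega_0,c_1)}{\|v_n(\omega_0,c_1)\|}\sim\frac{v_n(\omega_0,c_2)}{\|v_n(\omega_0,c_2)\|}$, note that {\bf H2'} plus the invariance of $\BB_1$ keeps everything in $\BB_1'$ after rotation reduction, and then invoke the contractivity from Conjecture~\ref{conjecture H4} to make the two normalised orbits collapse onto each other geometrically. The first two steps are sound.

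The gap is in your treatment of the second phase $k\geq[n/2]$. You assert that after the first phase has brought the two orbits to within $\rho^{[n/2]}$ of each other, ``the second phase can only dilate this distance by a multiplicative factor independent of $n$.'' But the second phase has $\Theta(n)$ steps, not a bounded number; if the per-step Lipschitz constant of the fibre maps $v\mapsto t_{\gamma}\bigl(D\TT_{\omega}(f^*_{n-k})v\bigr)/\|\cdot\|$ were merely some $L>1$, the accumulated dilation would be $L^{\Theta(n)}$, which beats $\rho^{[n/2]}$ whenever $L>\rho^{-1}$ and destroys the geometric equivalence. Your phrase ``bounded Lipschitz perturbations'' only gives a per-step bound, not a uniform bound on the composition over $\Theta(n)$ steps. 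What the paper actually does is stronger: it uses the differentiability of $\TT_\omega$ to push the contraction estimate of Conjecture~\ref{conjecture H4} (with a slightly degraded constant $\tilde{\rho}<1$) from the single point $\Phi$ to an entire neighbourhood of $\Phi$, and then observes that since $f^*_j\to\Phi$, there is a fixed $j_0$ such that $f^*_j$ lies in that neighbourhood whenever $j\geq j_0$. Consequently contraction at rate $\tilde{\rho}$ persists through all second-phase steps with $n-k\geq j_0$ --- which is all but the final $j_0$ steps --- and only those last $j_0$ steps (a fixed, $n$-independent count) contribute the bounded dilation constant, yielding the paper's estimate $K_0\,\tilde{\rho}^{\,n-j_0}$. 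You need this neighbourhood-contraction observation, not just a per-step Lipschitz bound, for the argument to close.
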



\subsubsection{Proofs}

\begin{lem}
\label{lemma reduction alphas to dynamics of renormalization}
Let  $c_1$ and $c_2$ be two families of q.p. forced maps satisfying 
the hypotheses of theorem \ref{theorem proof of the universality 
conjecture} and $\omega_0$ a
Diophantine number.  Consider the loss of reducibility
directions $\alpha_n'(c_i,\omega_0)$ associated to each family of
map, as well as  $v_n(c_i,\omega_0)$ given by the
recurrence (\ref{equation sequences directions simplified}).
 
If
\begin{equation}
\label{equation hyphotesis lemma reduction alphas}
 \frac{ v_{k}(\omega_0,c_1)}{\|v_{k}(\omega_0,c_1)\|} 
\sim \frac{ v_{k}(\omega_0,c_2)}{\|v_{k}(\omega_0,c_2)\|}, 
\end{equation}
then we have that
\begin{equation}
\label{equation lemma reduction alphas}
\frac{\alpha_n'(\omega_0,c_1)}{ \alpha_{n-1}'(\omega_0,c_1)}
\sim
\frac{\alpha_n'(\omega_0,c_2)}{ \alpha_{n-1}'(\omega_0,c_2)}.
\end{equation}
\end{lem}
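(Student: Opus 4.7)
The plan is to apply Theorem \ref{reduction alphas to dynamics of renormalization} to each of the families $c_1$ and $c_2$ and then compare the two resulting expressions factor by factor, using the hypothesis (\ref{equation hyphotesis lemma reduction alphas}) together with the three technical lemmas on the relation $\sim$ (Lemmas \ref{lemma equivalencia multiplicacio}, \ref{lemma equivalencia divisio} and \ref{lemma equivalencia funcio diferenciable}). Since the sequence $\omega_k = 2^k\omega_0$, the point $f^*_1\in W^u(\RR,\Phi)\cap\Sigma_1$ and the Feigenbaum constant $\deltabf$ are all independent of the family, the only dependence of the right-hand side of (\ref{equation theorem reduction to dynamics of renormalization}) on $c_i$ comes through the normalized vectors $\hat{v}_k(c_i) := v_k(\omega_0,c_i)/\|v_k(\omega_0,c_i)\|$.

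First I would write, for $i=1,2$,
\[
\frac{\alpha_n'(\omega_0,c_i)}{\alpha_{n-1}'(\omega_0,c_i)}\sim \deltabf^{-1}\cdot B_i(n)\cdot C_i(n),
\]
with $B_i(n)=K(\omega_{n-1},f^*_1,\hat v_{n-1}(c_i))/K(\omega_{n-2},f^*_1,\hat v_{n-2}(c_i))$ and $C_i(n)=\|D\TT_{\omega_{n-2}}(f^*_2)\hat v_{n-2}(c_i)\|$, where $K$ is as in (\ref{equation definition K}). By hypothesis $\hat v_k(c_1)\sim \hat v_k(c_2)$, so it suffices to show $B_1(n)\sim B_2(n)$ and $C_1(n)\sim C_2(n)$ and then combine via Lemma \ref{lemma equivalencia multiplicacio}, for which I also need the sequences to be bounded; boundedness of $B_i$ follows from the uniform lower bound in Conjecture \ref{conjecture H3}, while boundedness of $C_i$ follows because $\hat v_k(c_i)$ are unit vectors and $D\TT_{\omega}(f^*_2)$ is a bounded linear operator (uniformly in $\omega$, by the review in Section \ref{section review q-p renor}).

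For $C_i(n)$ the argument is essentially trivial: the map $v\mapsto\|D\TT_{\omega_{n-2}}(f^*_2)v\|$ is Lipschitz with constant $\|D\TT_{\omega_{n-2}}(f^*_2)\|$, which is uniformly bounded in $n$, so $|C_1(n)-C_2(n)|\le M\,\|\hat v_{n-2}(c_1)-\hat v_{n-2}(c_2)\|$, and the hypothesis gives the required geometric decay. For $B_i(n)$, Hypothesis {\bf H3} guarantees that $K(\omega,f^*_1,\cdot)$ is differentiable at each of the relevant points (the minimum of $DG_1(\omega,f^*_1)v$ is unique and non-degenerate), so I would carry out a mean value argument in the spirit of Lemma \ref{lemma equivalencia funcio diferenciable}: writing $\hat v_{n-1}(c_2)=\hat v_{n-1}(c_1)+\Delta_n$ with $\|\Delta_n\|\le k_0\rho^n$, a first order expansion of $K(\omega_{n-1},f^*_1,\cdot)$ along the segment yields $|K(\omega_{n-1},f^*_1,\hat v_{n-1}(c_1))-K(\omega_{n-1},f^*_1,\hat v_{n-1}(c_2))|\le C\|\Delta_n\|$. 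Combined with the uniform lower bound on $|K|$ from Conjecture \ref{conjecture H3} and Lemma \ref{lemma equivalencia divisio}, this gives $B_1(n)\sim B_2(n)$.

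The main obstacle is the uniformity of the derivative bound needed for the mean value argument on $K$: one must check that $\|\partial_v K(\omega_{n-1},f^*_1,v)\|$ stays uniformly bounded as $v$ varies in a small neighborhood of each $\hat v_{n-1}(c_i)$, uniformly in $n$ and in $\omega_0\in\Omega$. Non-degeneracy of the minimum (part of {\bf H3}) makes $K$ differentiable in a neighborhood, but the uniform bound requires controlling how close competing local minima of $DG_1(\omega_{n-1},f^*_1)v$ get to the global one along the sequence; this is where the hypothesis that the minimum is non-degenerate for every $n$ and every $\omega_0\in\Omega$ must be used in an essential way, together with the compactness properties of $\LL_\omega$ recalled in Section \ref{section review q-p renor}. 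Once this uniform bound is in hand, the remainder is an assembly of the three equivalence lemmas.
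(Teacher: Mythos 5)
Your proposal follows essentially the same route as the paper: apply Theorem \ref{reduction alphas to dynamics of renormalization} to each family, note that the only $c_i$-dependence on the right-hand side of (\ref{equation theorem reduction to dynamics of renormalization}) is through the normalized vectors $v_k(\omega_0,c_i)/\|v_k(\omega_0,c_i)\|$, establish $\ell^\infty$-boundedness of the three contributing factors (invoking Conjecture \ref{conjecture H3} and boundedness of $D\TT_{\omega}$), obtain $\sim$-equivalence of the $K$-terms via differentiability and of the norm-term via linearity, and then combine with Lemma \ref{lemma equivalencia multiplicacio}. The paper's proof is terser (it states the equivalences follow "using the same arguments of theorem \ref{reduction alphas to dynamics of renormalization}" together with the differentiability of $K$), but the underlying steps coincide with yours; your grouping into $B_i(n)$ and $C_i(n)$ is only a presentational repackaging, with Lemma \ref{lemma equivalencia divisio} absorbing the ratio. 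The uniformity concern you raise at the end about $\partial_v K$ is a fair observation, but it is not a gap particular to your plan: the paper delegates exactly this point to Hypothesis \textbf{H3} (uniqueness and non-degeneracy of the minimum for all $n$ and $\omega_0\in\Omega$) and to the differentiability established in the proof of the theorem, and you invoke the same ingredients.
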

\begin{proof}
Using the same arguments of theorem \ref{reduction alphas to dynamics 
of renormalization} it follows that the sequences
$$
\left\{ K\left(\omega_{n-1}, f^*_{1} , v_{n-1}'(\omega_0,c_i) \right) 
\right\}_{n>0},$$ 
$$\left\{ 1/ K\left(\omega_{n-2},  f^*_1,  v_{n-2}'(\omega_0,c_i) \right) 
\right\}_{n>1},$$  and 
$$\left\{ \left\| D\TT_{\omega_{n-2}}(f_2^*) v_{n-2}'(\omega_0,c_i) 
\right\| \right\}_{n>1},$$
belong to $\ell^\infty(\RR)$ for $i=1,2$. 

On the other hand, using the condition (\ref{equation 
hyphotesis lemma reduction alphas}) given by hypothesis and
using the differentiability of $K$ is not difficult to see that
\[
K\left(\omega_{k}, f^*_{1} , \frac{v_{k}(\omega_0,c_1) }{\|v_{k}(\omega_0, c_1) \|} 
\right) \sim 
K\left(\omega_{k}, f^*_{1} , \frac{v_{k}(\omega_0,c_2) }{\|v_{k}(\omega_0, c_2) \|} \right), 
\]
for $k=n-1, n-2$. Then  using that $D\TT_{\omega_{n-2}}(f_2^*)$ is linear we 
have that
\[
\left\| D\TT_{\omega_{n-2}}(f_2^*) \frac{ v_{n-2}(\omega_0, c_1)}
{\|v_{n-2}(\omega_0, c_1)\|} \right\| \sim
\left\| D\TT_{\omega_{n-2}}(f_2^*) \frac{ v_{n-2}(\omega_0, c_2)}
{\|v_{n-2}(\omega_0, c_2)\|} \right\|.
\]

Finally, we can apply
(\ref{equation theorem reduction to dynamics of renormalization})
and lemma \ref{lemma equivalencia multiplicacio} to conclude 
that (\ref{equation lemma reduction alphas}) holds.
\end{proof}

\begin{proof}[Proof of theorem \ref{theorem proof of the universality conjecture}]

Using lemma \ref{lemma reduction alphas to dynamics of renormalization}
it is enough to prove that (\ref{equation hyphotesis lemma reduction alphas}) 
holds. 

Given $\Psi$ a point where $\TT_\omega$ is differentiable, 
consider the map
\[
\begin{array}{rccc}
H_{\Psi,\omega}: &   \BB_1'  & \rightarrow &  \BB_1'   \\
&\rule{0ex}{3ex} v  & \mapsto & \displaystyle 
t_{\gamma(v)} \left(D\TT_\omega(\Psi)v\right) , 
\end{array}
\]
where $\gamma(v)$ is chosen such that
$t_{\gamma(v)} \left(D\TT_\omega(\Psi)v\right) 
\in \BB_1'$. Let $\Phi$ be the fixed point of the 
quasi-periodic renormalization operator, then 
$H_{\Phi,\omega)} \equiv \LL_\omega'$.

Using conjecture {\bf \ref{conjecture H4}} and the 
differentiability of $\TT_\omega$ we have that there 
exists a neighborhood of $\Phi$ such that, for any 
$v_1,v_2\in V\subset B_1'$ and $\omega\in \T$, we have
\begin{equation}
\label{equation bound contracivenes 2} 
\left\|\frac{H_{\Psi,\omega} (v_1)}{\| H_{\Psi,\omega} (v_1)\|} - 
\frac{H_{\Psi,\omega} (v_2)}{\| H_{\Psi,\omega} (v_2)\|} 
 \right\|<\tilde{\rho} \left\|\frac{v_1}{\|v_1\|}- 
\frac{v_2}{\|v_2\|}\right\|,
\end{equation} 
with $0<\tilde{\rho} <1$. Note that the invariance of 
$V$ given by conjecture {\bf \ref{conjecture H4}} also extends 
to this neighborhood of $\Phi$. 

Consider $f^*_j= \Sigma_j\cap W^u(\RR,\Phi)$, since 
$\Sigma_j$ accumulate to $W^s(\RR,\Phi)$,  we have that there 
exists $j_0$ such that $f^*_j$ belong to a neighborhood of $\Phi$ 
arbitrarily small. Using this fact, 
the definition of $v_{k}(\omega_0,c_i)$ given by 
(\ref{equation sequences directions simplified}), 
 (\ref{equation bound contracivenes 2})
and the differentiability of $\TT_\omega$ it is not difficult to 
check that 
\[
\left\| \frac{ v_{n}(\omega_0,c_1)}{\|v_{n}(\omega_0,c_1)\|} 
- \frac{ v_{n}(\omega_0,c_2)}{\|v_{n}(\omega_0,c_2)\|} \right\| 
\leq K_0 \tilde{\rho}^{n-j_0} \left\|\frac{ v_{0}(\omega_0,c_1)}{\|v_{0}(\omega_0,c_1)\|} 
- \frac{ v_{0}(\omega_0,c_2)}{\|v_{0}(\omega_0,c_2)}\right\|  
\]
where $K_0$ a constant.  This implies that 
(\ref{equation hyphotesis lemma reduction alphas}) holds, which finishes the proof.
\end{proof}

\begin{rem}
\label{remark on the assumption H4)}
To prove theorem 
 \ref{theorem proof of the universality conjecture} conjecture 
{\bf \ref{conjecture H4}} can be slightly relaxed. On the one hand, 
the existence of the open set $V\subset B'_1$ such that 
$L_1(\omega,V)\subset \{2\omega\}\times V$ can be replaced for 
an open set $V(\omega)$ depending on $\omega$ such that 
$L_1(\omega,V)\subset \{2\omega\}\times V(2\omega)$ for each 
$\omega\in\T$.  On the other hand, the contractivity on the 
second component can be replaced by the following condition. 
There exists constants $0<\rho<1$, $K>0$  and $n_0\in \N$
(independent of $\omega$), such that 
\[
\|\pi_2\left[(L_1')^n(\omega,u)\right] 
- \pi_2\left[(L_1')^n(\omega,v)\right]\| < K \rho^n
\]
for any $(\omega, u), (\omega,v) \in  \{\omega\}\times V(\omega)$  
(with $u$ and $v$ unitary vectors)  and any $n>n_0$.
\end{rem}

\subsection{Theoretical explanation to the second numerical observation} 
\label{subsection explanation 2nd observation}

In this section we give a theoretical explanation of the second 
numerical observation described in section \ref{sec:num obs}. 

Given a two parametric family of maps $\{c(\alpha,\eps)\}_{(\alpha,\eps)
\in A}$ satisfying hypotheses {\bf H1} and {\bf H2} let 
$\alpha'_n(\omega, c)$ denote the slope of the reducibility loss bifurcation 
associated to the $2^n$ periodic invariant curve of the family. 
Let $\left\{\TT_\omega\left(c(\alpha,\eps)\right)\right\}_{(\alpha,\eps)
\in \tilde{A}}$ denote the family defined as the renormalization 
$\{c(\alpha,\eps)\}_{(\alpha,\eps) \in A}$. 
This is, let $f=f_{\alpha,\eps}: \B_\rho \times \W \rightarrow 
\W$ be the map which defines $c(\alpha,\eps)$, then 
$\TT_\omega(c(\alpha,\eps))$ 
is given by the map $g: \B_\rho \times \W \rightarrow 
\W$ defined as $g =\TT_\omega(f)$. 
Let $\alpha^*$ be the parameter value for which $\{c(\alpha,0)\}_{(\alpha,0)
\in A}$ intersects $W^s(\Phi,\RR)$. For $(\alpha, \eps)$ close 
enough to $(\alpha^*,0)$ we have that $\TT_\omega(f_{\alpha,\eps})$ is 
well defined, then $\TT_\omega(c)$ is also well defined family for 
$(\alpha,\eps)\in \tilde{A}\subset A$ a neighborhood of $(\alpha^*,0)\in A$. 
If the family $\{c(\alpha,\eps)\}_{(\alpha,\eps) \in A}$ has as 
associated rotation number $\omega$, then the family 
$\left\{\TT_\omega\left(c(\alpha,\eps)\right)\right\}_{(\alpha,\eps)
\in \tilde{A}}$ has as a rotation number $2\omega$.

\begin{thm}
\label{theorem universality implies renormalization}
Assume that there exists $B_0$ a set of two parametric families (satisfying 
the hypotheses {\bf H1} and {\bf H2}) such that: 
\begin{enumerate}
\item \label{codition 1 thm univ implies renor} 
For any $c_1$ and $c_2$ in $B_0$, we have that
\[
 \frac{\alpha'_i(\omega, c_1)}{\alpha'_{i-1}(\omega,c_1)} 
\sim
\frac{\alpha'_i(\omega, c_2)}{\alpha'_{i-1}(\omega,c_2)}.
\]

\item \label{codition 2 thm univ implies renor}
For any family $\{c(\alpha,\eps)\}_{(\alpha,\eps)
\in A} \in B_0$ we have that $\left\{\TT_\omega\left(
c(\alpha,\eps)\right)\right\}_{(\alpha,\eps) \in \tilde{A}} \in B_0$.

\item \label{codition 3 thm univ implies renor}
For any value $\omega$ we have that
$\alpha'_i(\omega,c)/\alpha'_i(2\omega,c)$ is a bounded sequence.
\end{enumerate}
Then $\alpha'_i(\omega,c) /\alpha'_{i-1}(2\omega,c)$
converges to a constant value.
\end{thm}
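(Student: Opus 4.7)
The argument rests on a renormalization identity that relates the reducibility-loss directions of $c$ (with rotation number $\omega$) to those of the renormalized family $\TT_\omega(c)$ (which carries rotation number $2\omega$). Concretely, for $n$ large enough that the renormalization is defined, my first step is to prove
\[
\alpha'_n(\omega, c) \;=\; \alpha'_{n-1}\bigl(2\omega,\, \TT_\omega(c)\bigr).
\]
To see this, observe that $\TT_\omega(c)(\alpha,0) = \RR(c(\alpha,0))$, so the parameter value $s_n$ at which $c(\alpha,0)$ has a $2^n$-superattracting orbit is exactly the $(n-1)$-th superattracting parameter of $\TT_\omega(c)$. Writing out the sequences (\ref{equation sequences corollary directions})--(\ref{equation sequences directions initial}) attached to $\TT_\omega(c)$ at rotation number $2\omega$, the chain rule at $\eps=0$, together with the fact that $\TT_\omega$ restricted to uncoupled maps coincides with $\RR$, yields $\tilde f^{(n-1)}_k = f^{(n)}_{k+1}$, $\tilde u^{(n-1)}_k = u^{(n)}_{k+1}$, $\tilde v^{(n-1)}_k = v^{(n)}_{k+1}$ and $\tilde\omega_k = \omega_{k+1}$. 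Substituting these into formula (\ref{equation alpha n +}) for $\alpha'_{n-1}(2\omega, \TT_\omega(c))$ reproduces term-by-term the expression for $\alpha'_n(\omega, c)$.

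With this identity in hand, set
\[
y_n \;:=\; \frac{\alpha'_n(2\omega, \TT_\omega(c))}{\alpha'_n(2\omega, c)},
\]
so that the target quotient becomes $\alpha'_n(\omega, c)/\alpha'_{n-1}(2\omega, c) = y_{n-1}$ and the theorem reduces to showing that $\{y_n\}$ converges. By hypothesis \ref{codition 2 thm univ implies renor}, both $c$ and $\TT_\omega(c)$ lie in $B_0$; applying hypothesis \ref{codition 1 thm univ implies renor} to this pair at rotation number $2\omega$ gives
\[
\frac{\alpha'_{n+1}(2\omega,\TT_\omega(c))}{\alpha'_n(2\omega,\TT_\omega(c))} \;\sim\; \frac{\alpha'_{n+1}(2\omega, c)}{\alpha'_n(2\omega, c)},
\]
i.e.\ the two sides differ by $O(\rho^n)$ for some $0<\rho<1$. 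Rearranging this equivalence gives $y_{n+1}/y_n = 1 + \eps_n$, and once one checks that the denominator $\alpha'_{n+1}(2\omega,c)/\alpha'_n(2\omega,c)$ stays bounded away from zero one obtains $|\eps_n|\le K\rho^n$. Since $\sum|\eps_n|<\infty$, the telescoping product $y_n = y_{n_0}\prod_{k=n_0}^{n-1}(1+\eps_k)$ converges to a finite limit, which is the required conclusion.

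The main obstacle is the boundedness input needed to promote the asymptotic equivalence produced by hypothesis \ref{codition 1 thm univ implies renor} into an absolute geometric bound on $\eps_n$: one needs a lower bound on the ratios $|\alpha'_{n+1}(2\omega,c)/\alpha'_n(2\omega,c)|$, and this non-vanishing (together with the overall boundedness of $y_n$) is exactly what hypothesis \ref{codition 3 thm univ implies renor} is designed to supply once combined with the identity established in the first paragraph. The identity itself, although conceptually the heart of the argument, reduces to a bookkeeping exercise in formula (\ref{equation alpha n +}) once one notices that quasi-periodic renormalization commutes with the parameter-family structure at first order in $\eps$.
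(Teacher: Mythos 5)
The renormalization identity $\alpha'_n(\omega,c)=\alpha'_{n-1}(2\omega,\TT_\omega(c))$ is correctly stated, and your derivation of it (a shift of the sequences $f^{(n)}_k, u^{(n)}_k, v^{(n)}_k, \omega_k$ by one index, then substitution into formula (\ref{equation alpha n +})) is valid and in fact more self-contained than the paper's (which invokes the sets $\Upsilon^+_i(\omega)$ and lemma 3.7 from \cite{JRT11a}). The reduction ``$\alpha'_n(\omega,c)/\alpha'_{n-1}(2\omega,c)=y_{n-1}$ and it suffices to show $\{y_n\}$ converges'' is also fine.

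The gap is in the telescoping-product step. You write $y_{n+1}/y_n = 1+\eps_n$ where
\[
\eps_n = \frac{A_n-B_n}{B_n},\qquad A_n=\frac{\alpha'_{n+1}(2\omega,\TT_\omega(c))}{\alpha'_{n}(2\omega,\TT_\omega(c))},\qquad B_n=\frac{\alpha'_{n+1}(2\omega,c)}{\alpha'_{n}(2\omega,c)},
\]
and conditions \ref{codition 1 thm univ implies renor}--\ref{codition 2 thm univ implies renor} give $|A_n-B_n|\le K_0\rho^n$. To conclude $|\eps_n|\le K\rho^n$ you must bound $|B_n|$ \emph{away from zero}, and you claim that hypothesis \ref{codition 3 thm univ implies renor} supplies this. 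It does not: hypothesis \ref{codition 3 thm univ implies renor} is an \emph{upper} bound, and on a different quantity, namely $\alpha'_i(\omega,c)/\alpha'_i(2\omega,c)$ (same index, different rotation numbers), not $\alpha'_{n+1}(2\omega,c)/\alpha'_n(2\omega,c)$ (shifted index, same rotation number). Nor does the identity convert one into the other. Within the stated hypotheses of the theorem (conditions 1--3 on $B_0$; no conjecture is assumed here) there is no available lower bound on $B_n$, and a multiplicative argument cannot tolerate $B_n\to 0$.

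The paper's proof is structured so as not to need any lower bound. Setting $s_i=\alpha'_i(\omega,c)/\alpha'_{i-1}(2\omega,c)$, they use the identity to write
\[
s_i=\frac{\alpha'_{i-1}(2\omega,\TT_\omega(c))}{\alpha'_{i-2}(2\omega,\TT_\omega(c))}\cdot \frac{\alpha'_{i-1}(\omega,c)}{\alpha'_{i-1}(2\omega,c)},
\]
where the \emph{second} factor is precisely the quantity hypothesis \ref{codition 3 thm univ implies renor} bounds (above, which is all one needs). Since the first factor is $\sim\alpha'_{i-1}(2\omega,c)/\alpha'_{i-2}(2\omega,c)$, and multiplication by a \emph{bounded} sequence preserves $\sim$ (lemma \ref{lemma 3 proof thm univ implies renor}), one gets $s_i\sim s_{i-1}$ directly, and then the Cauchy-type lemma \ref{lemma 1 proof thm univ implies renor} finishes. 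The additive formulation ($s_i-s_{i-1}$ decays geometrically) is exactly what makes an upper bound suffice; your multiplicative formulation ($y_{n+1}/y_n\to 1$ geometrically) silently upgrades the requirement to a lower bound that the hypotheses do not grant. To repair your write-up you would either need to add that lower bound as an assumption, or switch to the additive/Cauchy argument and factor $s_i$ as above so that hypothesis \ref{codition 3 thm univ implies renor} enters as a bounded multiplier.
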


We want to use this theorem  and theorem \ref{theorem proof of the 
universality conjecture} to give an explanation of the second of the 
numerical observations described in section \ref{sec:num obs}. To 
do that we need to introduce a new conjecture. This conjecture will 
be also used in section \ref{subsection explanation 3rd observation} 
to explain the third numerical observation.

\begin{conj}
Consider  $\LL_\omega$ 
the map given by (\ref{equation maps L_omega}) and $\omega_0 \in \Omega$. 
Given  $v_{0,1}$ and $v_{0,2}$ two vectors in $\RHH(\W_{\rho})
\oplus \RHH(\W_{\rho})\setminus\{0\}$, consider the sequences 

\begin{equation}
\label{equation sequences directions counterexample 0}
\begin{array}{rcll} 
\omega_k & = & 2 \omega_{k-1} 
&
\text{ for } k = 1, ..., n-1.\\
\rule{0ex}{4ex} 
v_{k,1} & = & \LL_{\omega_{k-1}} \left(v_{k-1,1} \right)
&  \text{ for } k = 1, ..., n-1.\\
\rule{0ex}{4ex} 
v_{k,2} & = & \LL_{2\omega_{k-1}}\left(v_{k-1,2} \right)
&  \text{ for } k = 1, ..., n-1.
\end{array}
\end{equation}

\label{conjecture H5}
There exist constant $C_1$ and $C_2$ such that
\[
C_1 \frac{\|v_{0,2} \|}{\|v_{0,1}\|} \leq \frac{\|v_{n,2} \|} {\|v_{n,1}\|}  
\leq C_2 \frac{\|v_{0,2} \|}{\|v_{0,1}\|}, 
\]
for any $n\geq0$.
\end{conj}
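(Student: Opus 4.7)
The plan is to exploit the shift structure of the two sequences together with the spectral properties of the family $\{\LL_\omega\}_{\omega\in\T}$. The key preliminary observation is that both sequences involve the same sequence of operators, only shifted by one index. Setting $M_n(\omega) := \LL_{2^{n-1}\omega}\circ\cdots\circ\LL_{2\omega}\circ\LL_{\omega}$, we have $v_{n,1}=M_n(\omega_0)v_{0,1}$ and $v_{n,2}=M_n(2\omega_0)v_{0,2}$. Moreover one has the telescoping identity
\[
M_{n+1}(\omega_0)=M_n(2\omega_0)\circ \LL_{\omega_0},
\]
so that $v_{n+1,1}=M_n(2\omega_0)\bigl(\LL_{\omega_0}v_{0,1}\bigr)$ and $v_{n,2}=M_n(2\omega_0)v_{0,2}$. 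Both are therefore the image under the \emph{same} operator of two fixed vectors, $\LL_{\omega_0}v_{0,1}$ and $v_{0,2}$, which reduces the whole problem to comparing how $M_n(2\omega_0)$ stretches two prescribed initial vectors.

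The second step is to show that $M_n(2\omega_0)$ behaves asymptotically like a rank-one operator projecting onto a dominant eigendirection. Since $\LL_\omega$ is compact and depends analytically on $\omega\in\T$, the spectral theory recalled from \cite{Kat66} gives, generically, a simple dominant eigenvalue $\lambda(\omega)$ of maximum modulus, together with a continuous family of spectral projectors $P(\omega)$, both analytic in $\omega$ wherever the dominant eigenvalue stays simple. Using compactness of $\T$, $|\lambda(\omega)|$ and $\|P(\omega)\|$ are bounded above and below by positive constants uniformly in $\omega$, and the spectral gap to the second eigenvalue is bounded away from $1$. A standard product-of-operators estimate (a quantitative Perron--Frobenius argument) then shows that for any initial vector $u$ whose projection $P(\omega)u$ is non-zero,
\[
\bigl\| M_n(2\omega_0)u\bigr\| = \Bigl(\prod_{k=1}^{n}|\lambda(\omega_k)|\Bigr)\,\bigl|\langle P(\omega_1)u\rangle\bigr|\,\bigl(1+O(\rho^n)\bigr),
\]
for some $0<\rho<1$ independent of $u$ and $\omega_0$.

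Applying this estimate separately to $u=\LL_{\omega_0}v_{0,1}$ and $u=v_{0,2}$, the product of eigenvalues cancels in the ratio $\|v_{n+1,1}\|/\|v_{n,2}\|$, which therefore stays within fixed multiplicative bounds of $\|P(\omega_1)\LL_{\omega_0}v_{0,1}\|/\|P(\omega_1)v_{0,2}\|$ — a constant depending only on the data. Combining this with $\|v_{n+1,1}\|/\|v_{n,1}\|\sim|\lambda(\omega_n)|$, which is uniformly bounded above and below, gives the desired two-sided bound on $\|v_{n,2}\|/\|v_{n,1}\|$ with the prefactor $\|v_{0,2}\|/\|v_{0,1}\|$ absorbed into $C_1,C_2$ via the boundedness of the initial spectral projections.

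The main obstacle is the degeneracy assumption built into this argument. It requires both $\LL_{\omega_0}v_{0,1}$ and $v_{0,2}$ to have non-vanishing projection on the dominant eigenspace of $M_n(2\omega_0)$, and moreover it requires that no subdominant eigenvalue $\mu(\omega)$ collides in modulus with $\lambda(\omega)$ as $\omega$ ranges over $\T$ (otherwise the spectral projector ceases to be continuous and the uniform gap fails). Ruling out these two possibilities for the concrete operator $\LL_\omega$ defined in~(\ref{equation maps L_omega}) requires quantitative analytic information on its spectrum that the present approach does not provide: the numerical computations in~\cite{JRT11c} support the non-degeneracy, but a rigorous argument would need either an explicit diagonalisation of $\LL_\omega$ (likely impossible) or a perturbative comparison with a simpler model. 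This gap is the reason the statement is formulated as a conjecture rather than a theorem.
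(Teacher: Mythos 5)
The statement you are asked to prove is Conjecture~\ref{conjecture H5} in the paper, and the paper does \emph{not} prove it: it is explicitly posited as a conjecture, with support coming only from the numerical evidence in \cite{JRT11c}. Your proposal is therefore not competing against a paper proof but against the paper's decision to leave the statement unproven, and your concluding paragraph correctly identifies the obstruction. The telescoping observation $M_{n+1}(\omega_0)=M_n(2\omega_0)\circ\LL_{\omega_0}$, which recasts both $v_{n+1,1}$ and $v_{n,2}$ as images of fixed vectors under the \emph{same} operator cocycle $M_n(2\omega_0)$, is a clean reduction that does not appear in the paper and would be a sensible first step for anyone attempting an actual proof.

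That said, the spectral argument as you sketch it cannot work in the form you state it, and the obstruction is sharper than the generic ``non-degeneracy'' caveat you give. Recall from section~\ref{section review q-p renor} that $\LL_\omega$ commutes with every rotation $R_\gamma$, and the paper records the consequence: every nonzero eigenvalue of $\LL_\omega$ is either real with \emph{even} geometric multiplicity or part of a complex conjugate pair. A simple dominant real eigenvalue $\lambda(\omega)$ with a rank-one spectral projector $P(\omega)$, which is the backbone of your quantitative Perron--Frobenius estimate, therefore cannot exist for this family of operators. The dominant spectral piece is at least two-dimensional (a rotation-invariant plane), the ``dominant direction'' is not a direction but a plane carrying its own $SO(2)$-action, and the product estimate you invoke must be reformulated for a two-dimensional cocycle rather than a scalar one. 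This is not merely a technical wrinkle: the quantity $\bigl|\langle P(\omega_1)u\rangle\bigr|$ in your displayed estimate is ill-defined when $P(\omega)$ has rank $\geq 2$, and the cancellation of the eigenvalue product in the ratio no longer isolates a constant but a bounded, possibly non-convergent, sequence of norms of two-dimensional images. Interestingly, that weaker conclusion --- a uniformly bounded-above-and-below sequence rather than a limit --- is exactly what the conjecture asserts, so the rotational symmetry is consistent with the statement; but it also explains why a na{\"i}ve one-dimensional spectral argument overshoots and cannot be the route to a proof. Any rigorous attempt would need to control the two-dimensional dominant cocycle of $\{\LL_{\omega_k}\}$ uniformly in the Diophantine orbit $\omega_k=2^k\omega_0$, which is precisely the analytic information the paper states it does not have.

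Finally, two smaller points. First, $\LL_\omega$ is not a positive operator, so ``Perron--Frobenius'' is at best an analogy; what you really need is a uniform spectral gap for the cocycle, which compactness of $\LL_\omega$ and of $\T$ alone do not give without ruling out eigenvalue crossings along the orbit of the doubling map. Second, your argument silently assumes $v_{0,1}$ and $v_{0,2}$ have nonzero component in the dominant subspace; the conjecture as stated in the paper makes no such hypothesis, asking only $v_{0,i}\neq 0$, so even a repaired spectral argument would prove a weaker statement than the one conjectured.
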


In \cite{JRT11c} we include numerical evidences which suggest that this 
conjecture is true. It can be interpreted as a uniform growth condition 
on $\LL_\omega$.

\begin{cor}
\label{corolary univ implies renor}
Consider $\{c(\alpha,\eps)\}_{(\alpha,\eps)\in A}$ a two parametric
maps satisfying the hypotheses {\bf H1}, {\bf H2'} and {\bf H3}.
Assume that conjectures {\bf \ref{conjecture H2}}, {\bf \ref{conjecture H3}}, 
{\bf \ref{conjecture H4}} and {\bf \ref{conjecture H5}} are true. Let $\alpha^*$ 
be the parameter values for which the family $\{c(\alpha,0)\}_{(\alpha,0)\in A}$ 
intersects $W^s(\RR,\Phi)$. Consider $\operatorname{Rot}(V)$ the set 
given by (\ref{Set Rot(v)}) and assume that $\partial_\eps c(\alpha^*,0) \in \operatorname{Rot}(V)$.

Then for any $\omega_0\in \Omega$  we have that
\begin{equation}
\label{equation coro univ implies renor} 
\lim_{n\rightarrow \infty} 
\frac{\alpha_n'(\omega_0,c_1)}{ \alpha_{n-1}'(2\omega_0,c_1)},
\end{equation}
exists.
\end{cor}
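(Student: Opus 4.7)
The plan is to apply Theorem \ref{theorem universality implies renormalization} to the class $B_0$ of all two-parameter families $\tilde c$ satisfying \textbf{H1}, \textbf{H2'}, \textbf{H3}, with $\partial_\eps \tilde c(\tilde\alpha^*,0) \in \operatorname{Rot}(V)$, and with $\tilde c(\tilde\alpha^*,0)$ lying in a neighborhood $U$ of $\Phi$ small enough that the invariance property of conjecture \ref{conjecture H4} extends, by continuity, to $D\TT_\omega(\Psi)$ for every $\Psi \in U$. Since $c(\alpha^*,0) \in W^s(\RR,\Phi)$, the orbit $\RR^k(c(\alpha^*,0))$ enters $U$ for some $k$, and by replacing $c$ with $\TT_{\omega_0}^k(c)$ (which only reindexes the sequence of reducibility-loss slopes and therefore does not affect existence of the limit) I may assume $c \in B_0$.

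Condition (1) of Theorem \ref{theorem universality implies renormalization} is then delivered by Theorem \ref{theorem proof of the universality conjecture}, whose hypotheses are met by every pair in $B_0$. For condition (2), I would check that $\TT_{\omega_0}(c) \in B_0$: \textbf{H1} is preserved because $\TT_\omega$ restricted to $\BB_0$ is the classical Lanford operator $\RR$, which preserves the cascade structure; \textbf{H2'} is preserved because $\BB_1$ is invariant under $D\TT_\omega$ by the Fourier-mode analysis of Section \ref{section The Fourier expansion of DT}; and \textbf{H3} is preserved by continuity of the minimum functional on functions with a non-degenerate minimum. The delicate point is that the new perturbation direction is $D\TT_{\omega_0}(c(\alpha^*,0))\,\partial_\eps c(\alpha^*,0)$ rather than $\LL_{\omega_0}\,\partial_\eps c(\alpha^*,0)$; openness of $V$ combined with continuity of $D\TT_{\omega_0}$ in its first argument allows one to conclude, for $U$ chosen small enough, that the image still lies in $\operatorname{Rot}(V)$.

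For condition (3), I would use formula (\ref{equation alpha n +}): since the denominator $D\widehat{G}_1(f^{(n)}_{n-1})\,u^{(n)}_{n-1}$ does not depend on the rotation number, the ratio $\alpha'_n(\omega,c)/\alpha'_n(2\omega,c)$ reduces to the ratio of numerators. Writing $v^{(n)}_{n-1} = \|v^{(n)}_{n-1}\| \cdot (v^{(n)}_{n-1}/\|v^{(n)}_{n-1}\|)$ and using positive homogeneity of the operator $K$ in its vector argument, this ratio splits into a ratio of minima of unit-norm vectors times the factor $\|v^{(n)}_{n-1}(\omega)\| / \|v^{(n)}_{n-1}(2\omega)\|$. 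The minima are uniformly bounded above by continuity of $DG_1$ near $f_1^*$ and uniformly bounded below in absolute value by the last clause of conjecture \ref{conjecture H3}, while the norm ratio is uniformly bounded above and below by conjecture \ref{conjecture H5}. Together these yield condition (3), so Theorem \ref{theorem universality implies renormalization} applies and delivers the convergence claimed.

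The main obstacle is closure of $B_0$ under $\TT_\omega$ (step 2) — specifically, extending the $V$-invariance from the linearization $\LL_\omega$ at $\Phi$ to the actual Fr\'echet derivative $D\TT_\omega(\Psi)$ at nearby base points $\Psi$. This is what forces the definition of $B_0$ to constrain the base point $\tilde c(\tilde\alpha^*,0)$ to a small neighborhood of $\Phi$, and in turn is why the reduction to the iterate $\TT_{\omega_0}^k(c)$ is needed at the outset.
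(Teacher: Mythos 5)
Your overall strategy is the one the paper uses: apply Theorem~\ref{theorem universality implies renormalization} to a suitable class $B_0$, with condition~(1) supplied by Theorem~\ref{theorem proof of the universality conjecture} and condition~(3) supplied by formula~(\ref{equation alpha n +}) together with conjectures~\textbf{\ref{conjecture H3}} and \textbf{\ref{conjecture H5}}. Your treatment of condition~(3) agrees with the paper's. You also rightly notice something the paper glosses over: the paper sets $B_0$ to be the families satisfying only \textbf{H1} and \textbf{H2'}, which is not enough for condition~(1), since Theorem~\ref{theorem proof of the universality conjecture} also needs \textbf{H3} and the $\operatorname{Rot}(V)$ condition on the initial perturbation direction. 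The paper verifies $\TT_\omega$-invariance only for \textbf{H1} and \textbf{H2'} and never checks that \textbf{H3} and the $\operatorname{Rot}(V)$ membership persist under renormalization. Your instinct to strengthen $B_0$ is therefore sound.

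The step that does not hold up is your reduction ``by replacing $c$ with $\TT_{\omega_0}^k(c)$, which only reindexes the sequence\dots''. The quotient in~(\ref{equation coro univ implies renor}) does \emph{not} reindex cleanly under $\TT$. Iterating Lemma~\ref{lemma 2 proof thm univ implies renor} on the numerator gives $\alpha'_n(\omega_0,c)=\alpha'_{n-k}\bigl(2^k\omega_0,\TT_{2^{k-1}\omega_0}\circ\cdots\circ\TT_{\omega_0}(c)\bigr)$, while iterating it on the denominator (which starts from rotation number $2\omega_0$) gives $\alpha'_{n-1}(2\omega_0,c)=\alpha'_{n-1-k}\bigl(2^{k+1}\omega_0,\TT_{2^{k}\omega_0}\circ\cdots\circ\TT_{2\omega_0}(c)\bigr)$. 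The two compositions are taken over different sequences of rotation numbers and so are not the same map; the quotient therefore does not become $\alpha'_m(\tilde\omega,\tilde c)/\alpha'_{m-1}(2\tilde\omega,\tilde c)$ for any single $\tilde c$, and you cannot conclude that the existence of the limit for $\TT^k(c)$ implies it for $c$. You would instead need to argue directly that $\TT_{\omega_0}(c)$ (the one-step renormalization actually used in the proof of Theorem~\ref{theorem universality implies renormalization}) satisfies the $\operatorname{Rot}(V)$ condition, which brings back precisely the difficulty you pointed out about $D\TT_{\omega_0}(c(\alpha^*,0))$ versus $\LL_{\omega_0}=D\TT_{\omega_0}(\Phi)$. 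So the gap you detected in the paper is real, but the proposed workaround as stated does not close it.
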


\subsubsection{Proofs} 

\begin{proof}[Proof of theorem \ref{theorem universality implies renormalization}]

To prove the result we will need the following lemmas

\begin{lem}
\label{lemma 1 proof thm univ implies renor} 
Given a sequence $\{s_i\}_{i\in \Z_+}$ such that $s_i \sim s_{i-1}$
(in other words that its equivalent to the same sequence shifted by one position)
then it converges to a limit.
\end{lem}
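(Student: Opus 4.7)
The plan is to observe that the hypothesis $s_i \sim s_{i-1}$ is, by unpacking Definition \ref{definition equivalence of banach sequences}, precisely the statement that there exist constants $k_0 > 0$ and $0 < \rho < 1$ with
\[
\| s_i - s_{i-1} \| \leq k_0 \rho^i \qquad \text{for all } i \in \Z_+.
\]
So the consecutive increments of the sequence decay geometrically, and the result should follow by a standard Cauchy-sequence argument in the ambient Banach space.

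Concretely, I would first fix $n > m$ and telescope:
\[
\| s_n - s_m \| \leq \sum_{i=m+1}^{n} \| s_i - s_{i-1} \| \leq k_0 \sum_{i=m+1}^{n} \rho^i \leq \frac{k_0\, \rho^{m+1}}{1 - \rho}.
\]
The right-hand side depends only on $m$ and tends to $0$ as $m \to \infty$. This shows $\{s_i\}$ is a Cauchy sequence. Since the sequence lives in a Banach space (by the setting of Definition \ref{definition equivalence of banach sequences}), completeness then yields a limit $s_\infty$ with $s_i \to s_\infty$.

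There is essentially no obstacle here: the content of the lemma is that ``asymptotically equivalent to its own shift'' upgrades a summability-of-increments statement to convergence, which is just the completeness of the underlying Banach space. The only minor thing to be careful about is to write the geometric tail bound in a form that is uniform in $n$, so that letting $n \to \infty$ at the end gives $\| s_\infty - s_m \| \leq \tfrac{k_0 \rho^{m+1}}{1 - \rho}$, which in fact quantifies the rate of convergence of $s_i$ to its limit. This quantitative form is useful for the subsequent lemmas that presumably chain this result into a statement about $\alpha_n'(\omega, c)/\alpha_{n-1}'(2\omega, c)$ having a limit.
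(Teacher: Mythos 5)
Your proof is correct and follows essentially the same route as the paper: telescope the increments, bound by a geometric tail, conclude the sequence is Cauchy, and invoke completeness of the Banach space. The small quantitative remark about the rate of convergence is a harmless addition but does not change the argument.
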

\begin{proof}
We will see that $\{s_i\}_{i\in \Z_+}$ is a Cauchy sequence.

Consider a positive integer $N_0$ fixed, then for any (positive
integer) $N$ we have that
\[
s_{N_0} - s_{N_0 + N} = \sum_{i= N_0}^{N_0 + N -1} s_{i} - s_{i+1}.
\]
Using this and the triangular inequality we have that
\[
| s_{N_0} - s_{N_0 + N} | \leq \sum_{i= N_0}^{N_0 + N -1} | s_{i} - s_{i+1}|.  
\]
Now we can use the hypothesis $s_i \sim s_{i+1}$ and
bound term by term the equation above to obtain
\[
| s_{N_0} - s_{N_0 + N} | \leq \sum_{i= N_0}^{N_0 + N -1}  K_0 \rho^i 
\leq K_0 \frac{\rho^{N_0}}{1- \rho}. 
\]
Therefore $\{s_i\}_{i\in \Z_+}$ is a Cauchy sequence.
\end{proof}

\begin{lem}
\label{lemma 2 proof thm univ implies renor}
Consider a two parametric family of maps $\{c(\alpha,\eps)\}_{(\alpha,\eps)
\in A}$ satisfying hypotheses {\bf H1} and {\bf H2}. Let 
$\alpha'_n(\omega, c)$ denote the slope of the reducibility loss bifurcation 
associated to the $2^n$ periodic invariant curve of the family. Then we have 
$\alpha'_i(\omega,c) = \alpha'_{i-1}(2\omega,\TT_\omega(c))$.
\end{lem}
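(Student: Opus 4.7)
The plan is to prove the identity by unwinding the definition of both sides through formula (\ref{equation alpha n +}) and checking that the sequences (\ref{equation sequences corollary directions})--(\ref{equation sequences directions initial}) for $(\omega,c)$ at level $i$ coincide (after a one-step shift) with the analogous sequences for $(2\omega, \TT_\omega(c))$ at level $i-1$.

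First, I would identify the ``base point'' on the parameter line. The value $\alpha_n(c)$ is characterized by $c(\alpha_n,0)\in\Sigma_n$, and since $\RR$ maps $\Sigma_n$ to $\Sigma_{n-1}$, the renormalized uncoupled family $\TT_\omega(c)(\alpha,0)=\RR(c(\alpha,0))$ satisfies $\TT_\omega(c)(\alpha_i,0)\in\Sigma_{i-1}$. Hence the base parameter for $\alpha'_{i-1}(2\omega,\TT_\omega(c))$ is the same $\alpha_i$.

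Next, I would set up a shift correspondence. Denote by $\widetilde{\omega}_k$, $\widetilde{f}_k^{(i-1)}$, $\widetilde{u}_k^{(i-1)}$, $\widetilde{v}_k^{(i-1)}$ the sequences in (\ref{equation sequences corollary directions})--(\ref{equation sequences directions initial}) associated with $(2\omega,\TT_\omega(c))$ and level $i-1$. Since $\widetilde{\omega}_0=2\omega=\omega_1$ and doubling commutes with itself, $\widetilde{\omega}_k=\omega_{k+1}$ for $0\le k\le i-2$. For the base function, $\widetilde{f}_0^{(i-1)}=\TT_\omega(c)(\alpha_i,0)=\RR(c(\alpha_i,0))=\RR(f_0^{(i)})=f_1^{(i)}$, and then induction gives $\widetilde{f}_k^{(i-1)}=f_{k+1}^{(i)}$. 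For the tangent vectors, $\widetilde{u}_0^{(i-1)}=\partial_\alpha\TT_\omega(c)(\alpha_i,0)=D\RR(c(\alpha_i,0))\,\partial_\alpha c(\alpha_i,0)=u_1^{(i)}$ by the chain rule applied to $\TT_\omega|_{\BB_0}=\RR$, and likewise $\widetilde{v}_0^{(i-1)}=\partial_\eps\TT_\omega(c)(\alpha_i,0)=D\TT_\omega(c(\alpha_i,0))\,\partial_\eps c(\alpha_i,0)=v_1^{(i)}$ (this last step uses the Fr\'echet differentiability of $\TT_\omega$ at $f_0^{(i)}$ given by hypothesis {\bf H0} and the fact that $f_0^{(i)}\in\BB_0\subset\DD(\TT)$). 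Induction on $k$ in (\ref{equation sequences corollary directions}) then propagates the shift to $\widetilde{u}_k^{(i-1)}=u_{k+1}^{(i)}$ and $\widetilde{v}_k^{(i-1)}=v_{k+1}^{(i)}$.

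Finally, I would apply formula (\ref{equation alpha n +}) at index $i-1$ to $(2\omega,\TT_\omega(c))$:
\[
\alpha'_{i-1}\bigl(2\omega,\TT_\omega(c)\bigr)
=-\frac{m\!\left(DG_1\!\left(\widetilde{\omega}_{i-2},\widetilde{f}_{i-2}^{(i-1)}\right)\widetilde{v}_{i-2}^{(i-1)}\right)}{D\widehat{G}_1\!\left(\widetilde{f}_{i-2}^{(i-1)}\right)\widetilde{u}_{i-2}^{(i-1)}}.
\]
Using the shift identities at $k=i-2$ replaces every tilde with the untilded object at index $i-1$, producing exactly the right-hand side of (\ref{equation alpha n +}) at index $i$ for $(\omega,c)$, which is $\alpha'_i(\omega,c)$.

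The only nontrivial step is the identity $\widetilde{v}_0^{(i-1)}=v_1^{(i)}$, i.e.\ interchanging $\partial_\eps$ with the (nonlinear) renormalization $\TT_\omega$ at $\eps=0$; I would justify this by noting that the map $\eps\mapsto\TT_\omega(c(\alpha_i,\eps))$ is well-defined and differentiable on a neighborhood of $0$ (since $c(\alpha_i,0)\in\DD(\TT)\cap\BB_0$ and $\TT_\omega$ is Fr\'echet differentiable on a neighborhood of this point by the results of Section \ref{section definition and basic properties}), so the chain rule applies directly. Everything else is a bookkeeping check on the two recurrences.
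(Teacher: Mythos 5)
Your proposal is correct, but it takes a genuinely different route from the paper's proof. The paper argues geometrically: by lemma~3.7 of~\cite{JRT11a}, the renormalization operator maps the reducibility-loss bifurcation set $\Upsilon^+_i(\omega)$ into $\Upsilon^+_{i-1}(2\omega)$, so the parameter curve $\{c(\alpha,\eps)\}\cap\Upsilon^+_i(\omega)$ and the curve $\{\TT_\omega(c(\alpha,\eps))\}\cap\Upsilon^+_{i-1}(2\omega)$ are \emph{literally the same subset} of parameter space near $\alpha_i$, hence automatically have the same tangent at $\eps=0$. You instead unwind the explicit formula (\ref{equation alpha n +}) (which comes from theorem~3.8 of \cite{JRT11a}) and verify, by a shift correspondence, that the sequences $(\omega_k,f^{(i)}_k,u^{(i)}_k,v^{(i)}_k)$ for $(\omega,c)$ at level $i$ reproduce, with a one-index shift, the analogous sequences for $(2\omega,\TT_\omega(c))$ at level $i-1$. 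Both proofs are valid and both lean on~\cite{JRT11a}, only on different results; the paper's is shorter and actually proves a stronger statement (the whole bifurcation curve coincides, not just its slope), while yours is more elementary once the formula is in hand. One small inaccuracy to fix: you attribute the Fr\'echet differentiability of $\TT_\omega$ at $f_0^{(i)}$ to hypothesis \textbf{H0} and the inclusion $f_0^{(i)}\in\BB_0\subset\DD(\TT)$. In the paper, \textbf{H0} only gives existence of the fixed point~$\Phi$, and differentiability of $\TT_\omega$ is established only on a neighborhood of $\Phi$; for points along the $\RR$-orbit such as $f_0^{(i)}$ it is conjecture~\textbf{\ref{conjecture H2}} that supplies differentiability. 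Since formula (\ref{equation alpha n +}) already presupposes conjecture~\textbf{\ref{conjecture H2}} (via theorem~3.8 of \cite{JRT11a}), this does not break your argument, but the reference should be corrected.
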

\begin{proof}
The proof relies on the concepts introduced in sections 3.1 and 
3.2 of \cite{JRT11a}, concretely let us consider the set 
$\Upsilon_i^{+}(\omega)$ introduced there. We have that $\alpha'_i
(\omega,c)$ is the slope at $\eps=0$ of the curve in $A$ defined by 
$\{c(\alpha,\eps)\}_{(\alpha,\eps)\in A} \cap \Upsilon^+_i(\omega)$. 
On the other hand,  $\alpha'_{i-1}(2\omega,\TT_\omega(c))$ 
is the tangent direction of the curve in $\tilde{A}\subset A$
defined by $\{\TT_\omega(c(\alpha,\eps))\}_{(\alpha,\eps)\in \tilde{A}} \cap 
\Upsilon^+_{i-1}(2\omega)$. 

Let $\alpha_i$ be the parameter value for which 
$\{c(\alpha,0)\}_{(\alpha,0)\in A}$ intersects 
$\Sigma_i = \Upsilon^+_i(\omega) \cap \BB_0$. 
Using lemma 3.7 in \cite{JRT11a} we have that 
$\TT_\omega\left(\Upsilon^+_i(\omega) \cap \DD(\TT)\right) = 
\Upsilon^+_{i-1}(2\omega) \cap \operatorname{Im}(\TT_\omega)$, 
then we have that $\{c(\alpha,\eps)\}_{(\alpha,\eps)\in A} 
\cap \Upsilon^+_i(\omega)$ and 
$\{\TT_\omega(c(\alpha,\eps))\}_{(\alpha,\eps)\in \tilde{A}} \cap 
\Upsilon^+_{i-1}(2\omega)$ are exactly the same set around 
$\alpha_i$. Then their slope at $\eps=0$ also coincide. 
\end{proof} 

\begin{lem}
\label{lemma 3 proof thm univ implies renor}
Consider $\{r_i\}_{i\in \Z_+}$ and $\{s_i\}_{i\in \Z_+}$ 
two sequences of real numbers, with $r_i \sim s_i$. Consider also 
third sequence $\{K_i\}_{i \in Z_+}$ which is bounded.
Then we have $K_i r_i \sim K_i s_i$.
\end{lem}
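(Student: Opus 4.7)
The plan is to unwind the definition of the relation $\sim$ and use the boundedness of $\{K_i\}$ in a single-line estimate. Recall from Definition 3.3 that $r_i \sim s_i$ means there exist constants $k_0 > 0$ and $0 < \rho < 1$ with $|r_i - s_i| \leq k_0 \rho^i$ for all $i \in \Z_+$.

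First I would fix a bound $M > 0$ such that $|K_i| \leq M$ for all $i$, which exists by the boundedness hypothesis. Next, I would estimate the difference directly, namely
\[
|K_i r_i - K_i s_i| = |K_i|\,|r_i - s_i| \leq M \cdot k_0\, \rho^i = (Mk_0)\,\rho^i.
\]
Setting $\tilde{k}_0 := M k_0$ and using the same $\rho$ as in the hypothesis $r_i \sim s_i$, this is exactly the geometric bound required by Definition 3.3, so $K_i r_i \sim K_i s_i$.

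No obstacle is anticipated: the result is a one-line consequence of the definition and of pulling a bounded scalar out of an absolute value. The only mild care needed is to keep the same geometric ratio $\rho$ (rather than enlarging it), which is automatic since the factor $M$ is absorbed into the multiplicative constant.
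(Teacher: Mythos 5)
Your proof is correct and takes the same (indeed the only natural) approach as the paper, which simply states that the result ``follows easily from the definition of $\sim$'' without spelling out the estimate. You have filled in exactly the one-line computation the paper leaves to the reader, including the correct observation that the bound $M$ is absorbed into the constant $k_0$ while $\rho$ is unchanged.
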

\begin{proof} 
The proof follows easily from the definition of $\sim$. 
\end{proof}

Now we can focus on the proof of theorem 
\ref{theorem universality implies renormalization}. Using lemma 
\ref{lemma 1 proof thm univ implies renor} 
is enough to prove that
$\frac{\alpha'_i(\omega,c)}{\alpha'_{i-1}(2\omega,c)} \sim 
\frac{\alpha'_{i-1}(\omega,c)}{\alpha'_{i-2}(2\omega,c)}$.
Due to lemma \ref{lemma 2 proof thm univ implies renor}
we have  that $\alpha'_i(\omega,c) = \alpha'_{i-1}\left(2\omega,
\TT_\omega(c)\right)$,
therefore we have
\[
\frac{\alpha'_i(\omega,c)}{\alpha'_{i-1}(2\omega,c)} = 
\frac{\alpha'_{i-1}\left(2\omega,\TT_\omega(c)\right)}{\alpha'_{i-1}(2\omega,c)}. 
\]
If we multiply and divide the fraction by
$\alpha'_{i-2}\left(2\omega,\TT_\omega(c)\right) = \alpha'_{i-1}(\omega,c)$ we have
\begin{equation} 
\label{equation demo universality implies renormalization}
\frac{\alpha'_i(\omega,c)}{\alpha'_{i-1}(2\omega,c)} = 
\frac{\alpha'_{i-1}\left(2\omega,\TT_\omega(c)\right)}
{\alpha'_{i-2}\left(2\omega,\TT_\omega(c)\right)}
\cdot
\frac{\alpha'_{i-1}(\omega,c)}{\alpha'_{i-1}(2\omega,c)}.
\end{equation}

Using the fist and the second hypotheses and theorem 
\ref{theorem proof of the universality conjecture} we have 
\[
\frac{\alpha'_{i-1}\left(2\omega,\TT_\omega(c)\right)}
{\alpha'_{i-2}\left(2\omega,\TT_\omega(c)\right)}
\sim
\frac{\alpha'_{i-1}(2\omega,c)}{\alpha'_{i-2}(2\omega,c)}.
\]

Consider now two general sequences $\{r_i\}_{i\in \Z_+}$ and $\{s_i\}_{i\in \Z_+}$,
with $r_i \sim s_i$, and a third sequence $\{K_i\}_{i \in Z_+}$ which is bounded.
Then is not hard to see that $K_i r_i \sim K_i s_i$.

By hypothesis we have that $\left\{\frac{\alpha'_{i-1}(\omega,c)}
{\alpha'_{i-1}(2\omega,c)}\right\}_{i\in\Z_+}$ is bounded, then we can
apply lemma \ref{lemma 3 proof thm univ implies renor} 
to (\ref{equation demo universality implies renormalization}) to
obtain
\[
\frac{\alpha'_i(\omega,c)}{\alpha'_{i-1}(2\omega,c)} 
\sim 
\frac{\alpha'_{i-1}(2\omega,c)}{\alpha'_{i-2}(2\omega,c)}
\cdot
\frac{\alpha'_{i-1}(\omega,c)}{\alpha'_{i-1}(2\omega,c)} = 
\frac{\alpha'_{i-1}(\omega,c)}{\alpha'_{i-2}(2\omega,c)}.
\]
\end{proof}

\begin{proof}[Proof of corollary \ref{corolary univ implies renor}]
Set $B_0$ the set of two parametric families such that 
satisfy {\bf H1} and {\bf H2'}. The result
follows applying theorem \ref{theorem universality implies renormalization}. 
Let us check that the hypotheses of the theorem are satisfied. 

Condition \ref{codition 1 thm univ implies renor} of theorem 
\ref{theorem universality implies renormalization} is satisfied 
thanks to theorem \ref{theorem proof of the universality conjecture}. 

If a family  $\{c(\alpha,\eps)\}_{(\alpha,\eps)
\in A}$ satisfies {\bf H1}, we have that  
$\{c(\alpha,0)\}_{(\alpha,0)\in A}$ has a 
full cascade of period doubling bifurcations
 (in the sense described in {\bf H1}). Then 
$\left\{\TT_\omega\left(c(\alpha,0)\right)
\right\}_{(\alpha,0) \in \tilde{A}} = 
\left\{\RR \left(c(\alpha,0)\right)
\right\}_{(\alpha,0) \in \tilde{A}}
$ also has a full cascade of period doubling bifurcations. 
Then $\left\{\TT_\omega\left(c(\alpha,\eps)\right)
\right\}_{(\alpha,\eps) \in \tilde{A}}$ also 
satisfies {\bf H1}. If a family  $\{c(\alpha,\eps)\}_{(\alpha,\eps)
\in A}$ satisfies {\bf H2'} then $\left\{\TT_\omega\left(c(\alpha,\eps)\right)
\right\}_{(\alpha,\eps) \in \tilde{A}}$ also does due to 
the invariance of $\BB_1$ by $D\TT_\omega (\Psi)$. 
We have that condition \ref{codition 2 thm univ implies renor} of theorem
\ref{theorem universality implies renormalization} is also satisfied. 
 
If we apply theorem 3.8 in \cite{JRT11a} to $\alpha'_i(\omega,c)$ and 
 $\alpha'_i(2\omega,c)$ we obtain 
\begin{equation}
\label{equation proof coro univ imply reonr 1} 
\frac{\alpha'_i(\omega,c)}{\alpha'_i(2\omega,c)} 
= 
 \frac{m \left(DG_1 \left(\omega_{n-1}, f^{(i)}_{i-1}\right) 
v_{i-1}^{(i)} (\omega,c) \right)}
{\rule{0ex}{3.5ex}  
m \left(DG_1 \left(\omega_{n-1}, f^{(i)}_{i-1}\right) 
v_{i-1}^{(i)} (2\omega,c) \right)} .
\end{equation}

Using the same arguments used in the proof of theorem 
\ref{reduction alphas to dynamics of renormalization} 
to (\ref{equation proof coro univ imply reonr 1})
one obtains:
\begin{equation}
\label{equation proof coro univ imply reonr 2} 
\frac{\displaystyle
\alpha'_i(\omega,c)}{\alpha'_i(2\omega,c)} 
\sim  
 \frac{\displaystyle m \left(DG_1 \left(\omega_{n-1}, f^*_1\right) 
\frac{v_{i-1} (\omega,c)}{\|v_{i-1}  (\omega,c) \|} \right)}
{\rule{0ex}{3.5ex} \displaystyle  
m \left(DG_1 \left(\omega_{n-1}, f^*_1\right) 
\frac{v_{i-1}(2\omega,c)}{\| v_{i-1} (2\omega,c)\|} \right)} \thinspace 
\frac{\|v_{i-1} (\omega,c) \|}{\| v_{i-1}  (2\omega,c)\|}.
\end{equation} 

Using conjecture {\bf \ref{conjecture H3}}, we have that 
$\displaystyle  m \left(DG_1 \left(\omega_{n-1}, f^*_1\right) 
\frac{v_{i-1}(2\omega,c)}{\| v_{i-1} (2\omega,c)\|} \right)$ 
is bounded away from zero. Then the boundedness of 
$\displaystyle\frac{
\alpha'_i(\omega,c)}{\alpha'_i(2\omega,c)} $ only 
depends on the boundedness 
$\displaystyle\frac{\|v_{i-1} (\omega,c) \|}{\| v_{i-1}  (2\omega,c)\|}$, 
which is given by conjecture {\bf \ref{conjecture H5}}. 

Then we have that condition \ref{codition 1 thm univ implies renor} of theorem
\ref{theorem universality implies renormalization} is also satisfied.
\end{proof}

\subsection{Theoretical explanation to the third numerical observation} 
\label{subsection explanation 3rd observation}

In sections \ref{subsection explanation 1st observation} and 
\ref{subsection explanation 2nd observation} we focussed the
discussion on the asymptotic behavior for 
families satisfying hypothesis
{\bf H2'}. The aim of this section is to illustrate what 
happens with maps that satisfy hypotheses {\bf H1},
{\bf H2} and {\bf H3},  but not {\bf H2'}. This is the 
main difference between the family of maps considered in the first and 
second numerical observations of section \ref{sec:num obs} 
and the family considered in the third one.

Let $\{c(\alpha,\eps)\}_{(\alpha,\eps) \in A}$  be a two parametric family of maps 
satisfying hypotheses {\bf H1}, {\bf H2} and {\bf H3}. Let
$\alpha'_n(\omega, c)$ denote the slope of the reducibility loss bifurcation
associated to the $2^n$ periodic invariant curve of the family. 
Finally consider $\omega_0$ a  Diophantine rotation
number for the family. Let $\alpha^*$ be the parameter value 
for which $\{c(\alpha,0)\}_{(\alpha,0)\in A}$ intersects $W^s(\Phi,\RR)$.
Additionally assume that 
\[\partial_\eps c(\alpha^*,0) = v_{0,1} + v_{0,2} \text{ with } 
v_{0,i} \in \BB_i, \quad i=1,2,\] where
the spaces $\BB_i$ are given by (\ref{equation spaces bbk}). 

In the third numerical observation of section \ref{sec:num obs} we have 
considered the family $c$ as above with 
\begin{equation}
\label{eq: initial vectors}
v_{0,1}= f_1(x) \cos(\theta), \quad v_{0,2} = \eta f_2(x) \cos(2\theta). 
\end{equation}
As the family depends on $\eta$, we denote by $c_\eta$ this 
concrete family. This parameter $\eta$ is considered in addition 
to the parameters $\alpha$ and $\eps$ of the family. In other 
words, for each $\eta\geq 0$, $c_\eta$ is a two parametric family of maps. 
Numerical computations in \cite{JRT11p2} suggest that the sequence 
$\alpha_n'(\omega_0,c_\eta)/ \alpha_{n-1}'(\omega_0,c_\eta)$ (for $\eta>0$)
is not asymptotically equivalent to $\alpha_n'(\omega_0,c_0)/ 
\alpha_{n-1}'(\omega_0,c_0)$,
but both sequences are $\eta$-close to each other. Here $c_0$ denotes 
the family $c_\eta$ for $\eta=0$.   We first discuss 
why they are not asymptotically equivalent.

Due to theorem \ref{reduction alphas 
to dynamics of renormalization}
 we have that the values
$\frac{\alpha'_n(\omega_0,c)}{\alpha'_{n-1}(\omega_0,c)}$
depend only on the sequences $\omega_n$ and $v_n$ given by
equation (\ref{equation sequences directions simplified}),
with $v_0=t_{\gamma_0}\left(\partial_\eps c(\alpha^*,0) \right)$,
$\gamma_0$ such that $v_0 \in \BB'$ and
$\alpha^*$ the parameter value for which the family intersects
$W^s(\RR,\Phi)$. 

Due to theorem \ref{reduction alphas to dynamics of renormalization}
we have that the values $\frac{\alpha'_n(\omega_0,c_\eta)}
{\alpha'_{n-1}(\omega_0,c_\eta)}$
depend only on the sequences $\omega_n$ and $v_n$ given by
(\ref{equation sequences directions simplified}),
with $v_0=t_{\gamma_0}\left(\partial_\eps c(\alpha^*,0) \right)$,
$\gamma_0$ such that $v_0 \in \BB'$ and
$\alpha^*$ the parameter value for which the family intersects
$W^s(\RR,\Phi)$.
Recall that the space $\BB_1$ and $\BB_2$ are invariant by 
$D\TT_\omega(\Phi)$ (see proposition 2.16 in \cite{JRT11a}). 
We have that $v_n$ can be written as 
\begin{equation}
\label{eq:spliting vn}
v_n = v_{n,1} + v_{n,2},
\end{equation}
with 
\begin{equation}
\label{equation sequences directions counterexample}
\begin{array}{rcll} 
\omega_k & = & 2 \omega_{k-1} 
&
\text{ for } k = 1, ..., n-1.\\
\rule{0ex}{4ex} 
v_{k,1} & = & t_{\gamma(v)}\left(\LL_{\omega_{k-1}} \left( v_{k-1,1} \right)\right) 
&  \text{ for } k = 1, ..., n-1.\\
\rule{0ex}{4ex} 
v_{k,2} & = & t_{\gamma(v)}\left(\LL_{2\omega_{k-1}}\left(  v_{k-1,2} \right)\right)
&  \text{ for } k = 1, ..., n-1.
\end{array}
\end{equation}
where $v_{0,1}$ and $v_{0,2}$ are given by (\ref{eq: initial vectors}) and 
the value $\gamma(v_{n-1})$ is chosen such that $t_{\gamma(v)}\left( 
D\TT_\omega(\Phi) v_{n-1}\right)$ belongs to $\BB'$. Note that, since 
$t_\gamma$ and the projection $\pi_1$ given by (\ref{projection pi_1}) commute, $\gamma(v_{n-1})$ 
only depends on $v_{n-1,1}$. Then we have that the vectors $\frac{v_{n-1,1}
(c_{\nu_1})}{\|v_{n-1,1}(c_{\nu_1})\|}$ and $\frac{v_{n-1,1}
(c_{\nu_2})}{\|v_{n-1,1}(c_{\nu_2})\|}$ are asymptotically equivalent
 for any $\nu_1\neq\nu_2$.  Despite of these, the vectors 
$\frac{v_{n-1,2}(c_{\nu_1})}{\|v_{n-1,2}(c_{\nu_1})\|}$ and 
$\frac{v_{n-1,2}(c_{\nu_2})}{\|v_{n-1,2}(c_{\nu_2})\|}$ will not 
be (in general) asymptotic equivalents.  This explains why the universal 
behavior of the sequence $\alpha_n'(\omega_0,c_\eta)/\alpha_{n-1}'
(\omega_0,c_\eta)$ ceases for $\eta>0$.  

\begin{rem}
If we have a family with $v_0\in \BB_{j}\oplus\BB_{k}$ (with $j\neq k$) 
instead of $v_0\in \BB_{1}\oplus\BB_{2}$, then the same discussion 
can be adapted with minor modifications. 
\end{rem}

\begin{rem}
Consider $\tilde{c}$ an arbitrary two parametric family 
satisfying the hypotheses {\bf H1}, {\bf H2} and {\bf H3}. If 
the Fourier expansion w.r.t $\theta$ of $\partial_\eps c(\alpha^*,0)$ 
has non-trivial Fourier nodes for 
different orders of the expansion, then one should not expect it 
to exhibit the universal behavior of the Forced Logistic Map, 
since the same argument used for the family $c_\eta$ would be applicable. 
\end{rem}

To explain why quotients  $\alpha_n'(\omega_0,c_\eta)/
\alpha_{n-1}'(\omega_0,c_\eta)$ and $\alpha_n'(\omega_0,c_0)/
\alpha_{n-1}'(\omega_0,c_0)$  are $\eta$-close we have the following 
result. 

\begin{thm}
\label{thm eta-close alphas}
Consider $\{c_\eta(\alpha,\eps)\}_{(\alpha,\eps) \in A}$ a family of maps 
satisfying the hypotheses {\bf H1}, {\bf H2} and {\bf H3} 
for any $\eta_0\geq \eta\geq 0$ (with $\eta_0\neq0$ fixed).
Assume that conjectures {\bf \ref{conjecture H2}}, {\bf \ref{conjecture H3}}
and
{\bf \ref{conjecture H5}} are true.

Then there exist $\tilde{\eta}_0$ sufficiently small such that,
 for any $\tilde{\eta}_0\geq \eta\geq 0$, we have that
\begin{equation}
\left| \frac{\alpha_n'(\omega_0,c_\eta)}{ \alpha_{n-1}'(\omega_0,c_\eta)}
- \frac{\alpha_n'(\omega_0,c_0)}{ \alpha_{n-1}'(\omega_0,c_0)}\right|
\leq O(\eta) + O(\rho^n),
\end{equation}
where $\rho$ is the constant $0<\rho<1$ associated to the 
asymptotic equivalence relation $\sim$.
\end{thm}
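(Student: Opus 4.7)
The plan is to decompose the perturbation $\partial_\eps c_\eta(\alpha^*,0)$ according to its $\BB_1$ and $\BB_2$ Fourier components, propagate each part independently through the recurrence (\ref{equation sequences directions simplified}) using the invariance of these subspaces under $D\TT_\omega(\Psi)$, and show that for small $\eta$ the $\BB_2$-part is a uniformly small correction whose effect on the right-hand side of (\ref{equation theorem reduction to dynamics of renormalization}) is controlled linearly in $\eta$.

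First I would write $v_0(c_\eta) = v_{0,1} + \eta v'$ with $v_{0,1}\in\BB_1$ and $v'\in\BB_2$, so that $v_0(c_0) = v_{0,1}$. Every base point in the recurrence, namely $\Phi$ and the intersections $f^*_{n-k}\in W^u(\RR,\Phi)\cap\Sigma_{n-k}$, lies in $\BB_0$, so the invariance of $\BB_1$ and $\BB_2$ under $D\TT_\omega(\Psi)$ described after (\ref{equation spaces bbk}) applies, and the iterates split as $v_n(c_\eta)=v_{n,1}+\eta w_{n,2}$ with $v_{n,1}=v_n(c_0)\in\BB_1$ and $w_{n,2}\in\BB_2$. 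A key observation is that the section-rotation $\gamma(v)$ is determined by $\pi_1(v)$ alone (since $\pi_1$ commutes with $t_\gamma$), so the same sequence of rotations $\gamma_k$ appears in both the $c_\eta$ and $c_0$ recurrences and the splitting is exact rather than merely approximate.

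Next, I would invoke conjecture {\bf \ref{conjecture H5}} (applied after conjugating the restrictions of $D\TT_{\omega_{k-1}}$ to $\BB_1$ and $\BB_2$ onto $\LL_{\omega_{k-1}}$ and $\LL_{2\omega_{k-1}}$ respectively) to obtain the uniform-in-$n$ bound $\|\eta w_{n,2}\|\le C\eta\|v_{n,1}\|$. Combined with the lower bound on $\|v_{n-1}\|$ furnished by conjecture {\bf \ref{conjecture H3}}, a standard normalization argument yields
\[
\left\|\frac{v_n(c_\eta)}{\|v_n(c_\eta)\|} - \frac{v_n(c_0)}{\|v_n(c_0)\|}\right\| \le C'\eta,
\]
uniformly in $n$. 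Applying theorem \ref{reduction alphas to dynamics of renormalization} to both families, the differentiability of $K$, $G_1$ and $D\TT_{\omega_{n-2}}(f^*_2)$ (exactly as in lemma \ref{lemma equivalencia funcio diferenciable}), together with the uniform lower bound on $|K|$ provided by conjecture {\bf \ref{conjecture H3}} and {\bf H3}, transfers this estimate into $|\Psi_n(c_\eta)-\Psi_n(c_0)|\le C''\eta$, where $\Psi_n(c)$ denotes the right-hand side of (\ref{equation theorem reduction to dynamics of renormalization}). The $O(\rho^n)$ contribution then comes directly from the asymptotic equivalence "$\sim$" in the conclusion of theorem \ref{reduction alphas to dynamics of renormalization}.

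The main obstacle will be verifying that the constants hidden in the "$\sim$" from theorem \ref{reduction alphas to dynamics of renormalization} can be chosen \emph{uniformly} for $\eta\in[0,\tilde\eta_0]$; otherwise the decay rate $\rho$ could depend on $\eta$ and the combined bound would be vacuous. The strategy is to revisit the proof of that theorem and note that every auxiliary constant depends continuously on $\eta$ through quantities that are uniform on the compact parameter interval $[0,\tilde\eta_0]$: the derivative bounds on $K$ and $G_1$ required by lemma \ref{lemma equivalencia funcio diferenciable}, the non-degeneracy of the minimum from {\bf H3}, and the lower bounds from conjecture {\bf \ref{conjecture H3}}. Once a common $\rho$ and common multiplicative constants are secured, the triangle inequality delivers the stated $O(\eta)+O(\rho^n)$ bound.
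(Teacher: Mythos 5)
Your proposal follows essentially the same route as the paper: it decomposes $\partial_\eps c_\eta(\alpha^*,0)$ into $\BB_1$ and $\BB_2$ parts, uses the invariance of these subspaces and conjecture~\ref{conjecture H5} to bound the $\BB_2$ component by $O(\eta)$ relative to the $\BB_1$ component (this is precisely what lemma~\ref{prop ratios acotats implica distancies acotades} establishes), then passes the estimate through theorem~\ref{reduction alphas to dynamics of renormalization} using the differentiability of $K$, $G_1$ and $D\TT_\omega$, with the $O(\rho^n)$ term coming from the $\sim$ relation. Your closing caveat about whether the constant $\rho$ in $\sim$ can be chosen uniformly for $\eta\in[0,\tilde\eta_0]$ is a legitimate point that the paper's proof leaves implicit, and your compactness-plus-continuity remedy is a reasonable way to close it.
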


\begin{proof}[Proof of theorem \ref{thm eta-close alphas}] 
We need the following lemma. 
\begin{lem}
\label{prop ratios acotats implica distancies acotades}
Assume the same hypotheses as in theorem \ref{thm eta-close alphas}. 
Consider $v_k$, $v_{k,1}$ and $v_{k,2}$ given by 
(\ref{eq:spliting vn}) and 
(\ref{equation sequences directions counterexample}), with 
$v_{0,1}$ and $v_{0,2}$ given by (\ref{eq: initial vectors}).
Then we have that 
\[
\left\|\frac{v_k}{\|v_k \|}  - \frac{v_{k,1}}{\|v_{k,1} \|}\right\| <
\frac{ 2 C \eta}{1- C \eta} .
\]
\end{lem}
\begin{proof}
If we  use $v_n = v_{n,1} + v_{n,2}$, rearrange the sums,  and we 
apply the triangular inequality, then we have
\begin{eqnarray} 
\label{equation proof of prop close to universal}
\left\|\frac{v_n}{\|v_n \|}  - \frac{v_{n,1}}{\|v_{n,1} \|}\right\| & = & 
\left\|\frac{\|v_{n,1} \| - \|v_{n,1}  + v_{n,2}\|}{\|v_{n,1} \| \|v_{n,1}  + v_{n,2}\|}
v_{n,1}   +  \frac{v_{n,2}}{\|v_{n,1} +  v_{n,2} \|}\right\| \nonumber \\ 
& \leq & \frac{| \|v_{n,1} \| - \|v_{n,1}  + v_{n,2}\| | } {\|v_{n,1}  + v_{n,2}\|}
+  \frac{\|v_{n,2}\|}{\|v_{n,1} +  v_{n,2} \|}. 
\end{eqnarray}
Note that to deduce the last equation it is necessary to check that $\|v_{n,1}\| >0$. This 
is true due  to conjecture {\bf \ref{conjecture H3}}. Recall that if the assumption
is true, we have 
that there exists a constant $C'$ such that $\|v_n\| > C'$. If $\eta$ is 
small enough we have that $\eta C <1$, therefore $\|v_n \| > \|v_{n,2}\|$. 
Then we have 
\[
\| v_{n,1} \| = \| v_{n,1} + v_{n,2}  - v_{n,2} \| \geq \| v_n \|- \|v_{n,2}\| \geq C' - \eta C \|v_{n,1}\|. 
\]
Which yields
\[\| v_{n,1} \| \geq \frac{C'}{1+ \eta C}, \]
therefore we have that  $\|v_{n,1}\| >0$.

Using the reverse triangular inequality we have 
\[
| \|v_{n,1} \| - \|v_{n,1}  + v_{n,2}\| | \leq \| v_{n,2}\| \leq \eta C \| v_{n,1}\|.
\]

On the other hand, if $\eta$ is small enough we have that $\eta C <1$, therefore 
$\|v_{n,2}\| <\|v_{n,1}\| $ and consequently we have that 
\[
\|v_{n,1} +  v_{n,2} \| \geq | \|v_{n,1}\| - \|  v_{n,2} \|| \geq (1-C \eta)\|v_n\|.
\]

Applying the two last inequalities to equation (\ref{equation proof of 
prop close to universal}) the result follows. 
\end{proof} 

Using lemma \ref{prop ratios acotats implica distancies acotades} we have 
that 
\begin{equation}
\label{eq:vn = vn,1 + O(eta)} 
\frac{v_n}{\|v_n\|} = \frac{v_{n,1}}{\|v_{n,1}\|} + O(\eta). 
\end{equation}

Using theorem \ref{reduction alphas to dynamics of renormalization} 
and the definition of the equivalence relation $\sim$ 
follows 
\begin{equation}
\label{eq: thm conseq3 0}
\frac{\alpha_n'(c_\eta,\omega_0)}{ \alpha_{n-1}'(c_\eta,\omega_0)} = 
\deltabf^{-1} \cdot \frac{
\displaystyle m \left( DG_1\left(\omega_{n-1}, f^*_{1} , 
\frac{v_{n-1}}{\|v_{n-1}\|} \right) \right)
}{ \rule{0pt}{4ex} \displaystyle 
m \left( DG_1\left(\omega_{n-2},  f^*_1,  \frac{v_{n-2}}
{\|v_{n-2}\|} \right) \right)
}\cdot 
\left\| D\TT_{\omega_{n-2}}(f_2^*) \frac{ v_{n-2}}{\|v_{n-2}\|}.
\right\| + O(\rho^n)
\end{equation} 
with $DG_1$, $m$  and $f^*_1$ given by the hypotheses of the theorem. 

Since the hypotheses of theorem \ref{reduction alphas to 
dynamics of renormalization} are satisfied, we have that 
$m \left( DG_1\left(\omega_{k}, f^*_{1} , \cdot \right)\right)$  and 
$\|D\TT_{\omega_k}(f_2^*)(\cdot)\|$ are 
differentiable functions. 
Using this and  (\ref{eq:vn = vn,1 + O(eta)})  we obtain 
\begin{equation} 
\label{eq: thm conseq3 1}
m \left( DG_1\left(\omega_{k}, f^*_{1} , 
\frac{v_{k}}{\|v_{k}\|} \right) \right)  = m \left( DG_1\left(\omega_{k}, f^*_{1} , 
\frac{v_{k,1}}{\|v_{k,1}\|} \right)\right) + O(\eta), \text{ for } k=n-2,n-1, 
\end{equation}
and
\begin{equation} 
\label{eq: thm conseq3 2}
\left\| D\TT_{\omega_{n-2}}(f_2^*) \frac{ v_{n-2}}{\|v_{n-2}\|}.
\right\| = \left\| D\TT_{\omega_{n-2}}(f_2^*) \frac{ v_{n-2,1}}{\|v_{n-2,1}\|}.
\right\| + O(\eta). 
\end{equation}
Replacing (\ref{eq: thm conseq3 1}) and (\ref{eq: thm conseq3 2}) into 
(\ref{eq: thm conseq3 0}) follows easily
\[\frac{\alpha_n'(c_\eta,\omega_0)}{ \alpha_{n-1}'(c_\eta,\omega_0)} = 
\deltabf^{-1} \cdot \frac{
\displaystyle m \left( DG_1\left(\omega_{n-1}, f^*_{1} , 
\frac{v_{n-1,1}}{\|v_{n-1,1}\|} \right) \right)
}{ \rule{0pt}{4ex} \displaystyle 
m \left( DG_1\left(\omega_{n-2},  f^*_1,  \frac{v_{n-2,1}}
{\|v_{n-2,1}\|} \right) \right)
}\cdot 
\left\| D\TT_{\omega_{n-2}}(f_2^*) \frac{ v_{n-2,1}}{\|v_{n-2,1}\|}.
\right\| + O(\eta) + O(\rho^n). \]

Using this and theorem \ref{reduction alphas to dynamics of renormalization} 
on the family $c_0$ we have 
\[\frac{\alpha_n'(c_0,\omega_0)}{ \alpha_{n-1}'(c_0,\omega_0)} = 
\deltabf^{-1} \cdot \frac{
\displaystyle m \left( DG_1\left(\omega_{n-1}, f^*_{1} , 
\frac{v_{n-1,1}}{\|v_{n-1,1}\|} \right) \right)
}{ \rule{0pt}{4ex} \displaystyle 
m \left( DG_1\left(\omega_{n-2},  f^*_1,  \frac{v_{n-2,1}}
{\|v_{n-2,1}\|} \right) \right)
}\cdot 
\left\| D\TT_{\omega_{n-2}}(f_2^*) \frac{ v_{n-2,1}}{\|v_{n-2,1}\|}.
\right\| + O(\rho^n). \]

Using the two last equations the result follows. 
\end{proof}


\bibliographystyle{plain}

\end{document}